\long\def\alert#1{\parindent2em\smallskip\hbox to\hsize%?
{\hskip\parindent\vrule%?
\vbox{\advance\hsize-2\parindent\hrule\smallskip\parindent.4\parindent%?
\narrower\noindent#1\smallskip\hrule}\vrule\hfill}\smallskip\parindent0pt}
 \newtheorem{thm}{Theorem}[section]
\newtheorem{cor}[thm]{Corollary}
 \newtheorem{lem}[thm]{Lemma}
\theoremstyle{definition}
 \newtheorem{defn}[thm]{Definition}
\theoremstyle{remark}
 \numberwithin{equation}{section}
\begin{document}

%\end{document}

%
\title[ On the non-abelian tensor square of groups of order dividing $p^{5}$] {On the non-abelian tensor square of all groups of order dividing $p^{5}$}
\author[T. J. Ghorbanzadeh]{Taleea jalaeeyan ghorbanzadeh}
\author[M. Parvizi]{Mohsen Parvizi}
\author[P. Niroomand]{Peyman Niroomand}

\address{Department of Pure Mathematics\\
Ferdowsi University of Mashhad, Mashhad, Iran}
\email{jalaeeyan@gmail.com, ta.jalaeeyan@mail.um.ac.ir}
\address{Department of Pure Mathematics\\
Ferdowsi University of Mashhad, Mashhad, Iran}
\email{parvizi@math.um.ac.ir}
\address{School of Mathematics and Computer Science\\
Damghan University, Damghan, Iran}
\email{niroomand@du.ac.ir, p$\_$niroomand@yahoo.com}

%\date{\today}
%
%\dedicatory{}

\subjclass[2010]{Primary 20F14; Secondary 20F99.}

\keywords{non-abelian tensor square, Schur multiplier}

\begin{abstract}
In this paper we consider all groups of order dividing $p^5$.  We obtain  the explicit structure of the non-abelian tensor square, non-abelian exterior square, tensor center, exterior center, the third homotopy group of suspension of an Eilenberg-MacLain space $k(G,1) $ and $\triangledown(G)$ of such groups.
\end{abstract}
\maketitle
\section{Introduction and motivation}
%\section{INTRODUCTION AND MOTIVATION }
The concept of non-abelian tensor product of groups was born by Brown and Loday \cite{Brown1, B2}, which is an applied topic in $K$-theory and homotopy theory. The readers can find some spacious result on this topic in  \cite{ Blyth, Brown1, Brown} and \cite{Moghaddam}. The non-abelian tensor square $G\otimes G$ of the group $G$ is the group generated by the symbols $g\otimes h$, subject to the relations
\[gg' \otimes h =\  (^{g}g' \otimes ^{g} h)(g \otimes h) ~\text{and}~ g \otimes hh' = (g \otimes h) ( ^h g \otimes ^h h')\]
for all $g, g',h,h'\in G$, where $G$ acts  on itself by conjugation via $^{g}g'=gg'g^{-1}$.\newline The tensor square is a special case of non-abelian tensor product $G \otimes H$ of two arbitrary groups $G$ and $H$. The exterior square $ G \wedge G$ is obtained by imposing the additional relation $g\otimes g =1_{\otimes} $ on $G \otimes G.$\newline
Recall that \cite{Brown} describes the maps $\kappa : G \otimes G\rightarrow G'$ and  $\kappa' : G \wedge G\rightarrow G'$ which are both homomorphisms of groups. The kernel of $\kappa$ which is denoted by $J_{2}(G)$, is isomorphic to the third homotopy group of suspension of an Eilenberg-MacLain space $k(G,1)$ and the kernel of $\kappa'$ which is isomorphic to $\mathcal{M}(G)$, is the Schur multiplier of $G$ (see \cite{B2, Brown} for more details).
%%%%%%%%%%%%%%%%%%%%%%%%%%
The following commutative diagram with exact rows and central extensions as columns involves the third integral homology group of $G$ and  Whitehead functor (see \cite{Brown} for details)
\begin{align*}
&\quad  \quad \quad\quad \ \quad  \quad  \quad  \quad \quad\ \quad  \quad  \quad  \quad \quad\ 0  \quad  \quad  \quad \quad 0\\
&\quad  \quad \quad\quad \ \quad  \quad  \quad  \quad \quad\ \quad  \quad  \quad  \quad \quad\downarrow  \quad  \quad  \quad \quad\downarrow\\
&1\longrightarrow  H_{3} (G)\longrightarrow\Gamma(G/G')\longrightarrow J_{2}(G)\longrightarrow H_{2} (G)\longrightarrow 0 \\&\quad  \quad \quad\quad \parallel \quad  \quad  \quad  \quad \quad\downarrow \quad  \quad  \quad  \quad \quad\downarrow  \quad  \quad  \quad \quad\downarrow\\
& \quad \quad  \quad H_{3} (G)\longrightarrow \Gamma(G/G')\longrightarrow G\otimes G\longrightarrow G\wedge G\longrightarrow 1 \\
&\quad  \quad \quad\quad \quad  \quad  \quad  \quad \quad \quad  \quad  \quad  \quad \quad \kappa\downarrow   \quad \quad  \quad \kappa'\downarrow\\
&\quad  \quad \quad\quad \ \quad  \quad  \quad \quad \quad\ \quad  \quad  \quad  \quad \quad\ G'   \quad =\quad   G'\\
&\quad  \quad \quad\quad \ \quad  \quad  \quad  \quad \quad\ \quad  \quad  \quad  \quad \quad \downarrow \quad \quad\quad \quad  \downarrow\\
&\quad  \quad \quad\quad \ \quad  \quad  \quad  \quad \quad\ \quad  \quad  \quad  \quad \quad\ 0  \quad  \quad  \quad \quad 0\\
\end{align*}
Following the terminology in \cite{Ellis5}, we consider the notations of  tensor center and exterior center,
 $Z^{\otimes}(G)=\lbrace g\in G\vert g\otimes h=1_{G\otimes G}$, for all $ h \in G\rbrace,$ and
$Z^{\wedge}(G) =\lbrace g\in G\vert g\wedge h=1_{G\wedge G},$ for all $ h \in G\rbrace$, respectively. $Z^{\wedge}(G)$ is a subgroup of $Z(G)$ which has the property that $G$ is capable if and only if $Z^{\wedge}(G)= 1$ that is, whether $G$ is isomorphic to $E/Z(E)$ for some group $E$.
 Ellis proved $Z^{\wedge}(G)$ is isomorphic to the epicenter of $G$ which is denoted by $Z^{*}(G)$, that is defined as follows
 \begin{defn}
$Z^{\ast}(G)$ is the intersection of all subgroups of the form $\psi(Z(G))$ where $\psi : E \rightarrow G$ is an arbitrary surjective homomorphism with ker $\psi\subseteq Z(G)$.
 \end{defn}
%%%%%%%%%%%%%%%%%%%%%%%%%%%%%%55
The next lemmas show the relations between  $Z^{*}(G)$, $Z^{\otimes}(G)$ and $Z^{\wedge}(G)$, which play important role in this paper
\begin{lem} \cite[Proposition 16 $(vii)$]{Ellis5}\label{2.1} Let $G$ be any group. Then $Z^{*}(G) \cong Z^{\wedge}(G).$
\end{lem}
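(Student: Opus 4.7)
The plan is to prove the two inclusions $Z^{\wedge}(G) \subseteq Z^{*}(G)$ and $Z^{*}(G) \subseteq Z^{\wedge}(G)$ separately, making essential use of the commutative diagram displayed at the start of the introduction. The conceptual anchor is that both $Z^{*}(G)$ and $Z^{\wedge}(G)$ deserve to be called ``the smallest central subgroup of $G$ whose quotient is capable'': once this universal property is pinned down for each, uniqueness forces them to coincide.

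First I would exhibit a distinguished central extension $1 \to \mathcal{M}(G) \to \tilde G \xrightarrow{\pi} G \to 1$ built from the data displayed in the introductory diagram. The recipe is to start from the short exact sequence $1 \to \mathcal{M}(G) \to G \wedge G \xrightarrow{\kappa'} G' \to 1$ extracted from the kernel of $\kappa'$, and to extend it along the inclusion $G' \hookrightarrow G$ by means of a cocycle built from a set-theoretic section of $G \to G/G'$. By construction $\tilde G$ is a stem extension, i.e.\ $\ker \pi \subseteq Z(\tilde G) \cap \tilde G'$, so it qualifies for the intersection defining $Z^{*}(G)$; hence $Z^{*}(G) \subseteq \pi(Z(\tilde G))$. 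Unpacking the defining relations of the exterior square shows directly that an element $g \in G$ lies in $\pi(Z(\tilde G))$ if and only if $g \wedge h = 1_{G \wedge G}$ for every $h \in G$; this gives $\pi(Z(\tilde G)) = Z^{\wedge}(G)$ and therefore $Z^{*}(G) \subseteq Z^{\wedge}(G)$.

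For the reverse inclusion, I would take an arbitrary surjection $\psi : E \to G$ with $\ker \psi \subseteq Z(E)$ and show every $g \in Z^{\wedge}(G)$ lies in $\psi(Z(E))$. Functoriality of the exterior square yields a surjection $E \wedge E \twoheadrightarrow G \wedge G$ fitting into a morphism of the bottom rows of the introductory diagram, and the universal property enjoyed by $\tilde G$ among stem covers provides a homomorphism $\varphi : \tilde G \to E$ compatible with the projections to $G$. A diagram chase then shows that for $g \in Z^{\wedge}(G) = \pi(Z(\tilde G))$, a central lift $\tilde g \in Z(\tilde G)$ maps via $\varphi$ to a central element of $E$ projecting to $g$. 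Intersecting over all admissible $\psi$ gives $Z^{\wedge}(G) \subseteq Z^{*}(G)$.

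The main obstacle will be the clean construction of $\tilde G$ together with verification of its stem-cover property. The extension $G \wedge G \to G'$ sits only over the commutator subgroup, and promoting it to a central extension of $G$ requires either a Hopf-type formula or the Brown--Loday machinery for the tensor square; juggling the cocycle data so that $\tilde G \twoheadrightarrow G$ remains functorial in $G$ is the technical heart of the argument. Once $\tilde G$ is in place, matching $\pi(Z(\tilde G))$ with $Z^{\wedge}(G)$ and producing $\varphi$ in the second inclusion reduce to comparatively routine checks using the exterior-square relations $g \wedge g = 1_{\wedge}$ and $gg' \wedge h = ({}^{g}g' \wedge {}^{g}h)(g \wedge h)$.
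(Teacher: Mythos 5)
There is a genuine gap, and it sits exactly where you located the ``technical heart.'' First, note that the paper offers no proof of this lemma at all: it is quoted verbatim as Proposition 16(vii) of Ellis's paper \emph{Tensor Products and q-Crossed Modules}, so any comparison is with the standard argument there (and in Beyl--Felgner--Schmid). The first half of your sketch is essentially sound: a stem extension $\pi:\tilde G\to G$ with kernel $\mathcal{M}(G)$ exists, the commutator map $G\times G\to \tilde G'$ is a crossed pairing and induces an isomorphism $G\wedge G\cong \tilde G'$ (compare orders using $|G\wedge G|=|\mathcal{M}(G)|\,|G'|$ and $\ker\pi\subseteq \tilde G'$), whence $\pi(Z(\tilde G))=Z^{\wedge}(G)$ and $Z^{*}(G)\subseteq Z^{\wedge}(G)$ since $\tilde G$ is one of the admissible extensions in the definition of $Z^{*}(G)$.

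The reverse inclusion, however, is built on a false premise: a stem cover $\tilde G$ has no universal property among central extensions of $G$. Stem covers are in general not unique up to isomorphism, not functorial in $G$, and not initial: for $G=\mathbb{Z}_p\times\mathbb{Z}_p$ a stem cover is a non-abelian group of order $p^{3}$, and there is no homomorphism from it to the central extension $E=\mathbb{Z}_{p^{2}}\times\mathbb{Z}_p\twoheadrightarrow G$ compatible with the projections (the map would have to factor through $\tilde G^{\mathrm{ab}}\cong\mathbb{Z}_p\times\mathbb{Z}_p$, and $E\to G$ does not split). So the homomorphism $\varphi:\tilde G\to E$ you invoke need not exist, and the diagram chase collapses. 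The standard repair is to replace $\tilde G$ by $F/[R,F]$ for a free presentation $G\cong F/R$: Hopf-type formulas give $G\wedge G\cong [F,F]/[R,F]$ compatibly with $\kappa'$, so the same commutator argument yields $\pi(Z(F/[R,F]))=Z^{\wedge}(G)$; and because $F$ is free, every surjection $\psi:E\to G$ with $\ker\psi\subseteq Z(E)$ admits a lift $\beta:F/[R,F]\to E$ over $G$, with $E=\beta(F/[R,F])\cdot\ker\psi$ forcing $\beta(Z(F/[R,F]))\subseteq Z(E)$. Intersecting over all $\psi$ gives $Z^{\wedge}(G)\subseteq Z^{*}(G)$. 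With that substitution your outline becomes the standard proof; as written, the second inclusion does not go through.
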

%%%%%%%%%%%%%%%%%%%%%%%%%%%%55
\begin{lem}\cite[Propositin 16 $(ii)$]{Ellis5}\label{88}
Let $G$ be any group. Then
$Z^{\otimes}(G) \leq Z^{\wedge}(G).$
\end{lem}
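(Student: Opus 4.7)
The plan is to exploit the canonical surjection from the non-abelian tensor square onto the non-abelian exterior square, which is already visible in the commutative diagram displayed earlier in the paper. By the very definition of $G\wedge G$ as the quotient of $G\otimes G$ obtained by imposing the extra relations $g\otimes g=1_{\otimes}$, there is a well-defined surjective homomorphism $\pi\colon G\otimes G\twoheadrightarrow G\wedge G$ sending each generator $g\otimes h$ to $g\wedge h$; in particular $\pi(1_{G\otimes G})=1_{G\wedge G}$. This map is standard and its construction appears in \cite{Brown}. The first step of my proof is simply to record the existence of $\pi$ and the verification that it is a group homomorphism, which is immediate because every defining relation of $G\otimes G$ continues to hold in $G\wedge G$ (only the additional relations $g\otimes g=1_{\wedge}$ are imposed, so no defining identity of $G\otimes G$ is destroyed).

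With $\pi$ in hand, the inclusion follows in one line. Let $g\in Z^{\otimes}(G)$ be arbitrary. For every $h\in G$ we have $g\otimes h=1_{G\otimes G}$ by definition of the tensor center, so applying $\pi$ yields
\[
g\wedge h \;=\; \pi(g\otimes h) \;=\; \pi(1_{G\otimes G}) \;=\; 1_{G\wedge G}
\]
for every $h\in G$. Hence $g\in Z^{\wedge}(G)$, and therefore $Z^{\otimes}(G)\le Z^{\wedge}(G)$ as required.

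The main obstacle is essentially absent: the statement is a direct consequence of the universal property presenting $G\wedge G$ as a canonical quotient of $G\otimes G$, and no combinatorial manipulation of the crossed-module style defining relations of $G\otimes G$, nor any appeal to structure theorems, is required. The only nontrivial ingredient is the existence and surjectivity of $\pi$, and that is exactly the content of the middle row of the commutative diagram reproduced in the introduction. Consequently the argument is purely definitional rather than computational, and I expect the entire proof to occupy only a few lines once $\pi$ is named.
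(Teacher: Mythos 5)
Your proof is correct, and it is the standard argument: the paper itself does not prove this lemma but merely cites Ellis \cite[Proposition 16 (ii)]{Ellis5}, and the argument there is exactly this one-line application of the canonical surjection $\pi\colon G\otimes G\twoheadrightarrow G\wedge G$ to the relation $g\otimes h=1_{G\otimes G}$. Nothing is missing.
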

%\begin{proof} It is clear by the definition.
%\end{proof}
%%%%%%%%%%%%%%%%%%%%%%%%%%%%%%%%555
 Thanks to \cite{Girant}, by the next theorem we give the presentation of all groups of order dividing $p^5$.

%%%%%%%%%%%%%%%%%%%%%%%%%%%%%%%%555

\begin{thm}\cite{Girant} \label{main}
Let $w$ be a primitive root of finite domain $\mathbb{F}_{p}$ of order $p $  $(p>3)$ and let $a,b$ and $k$ be integers where $a\in w_{3}=\lbrace x\in \mathbb{F}_{p}\vert x^{3}=1\rbrace, b\in w_{4}=\lbrace x\in \mathbb{F}_{p}\vert x^{4}=1\rbrace$ and $k\in \lbrace 1\dots ,\frac{p-1}{2}\rbrace$.  Let $G_{i}= \langle g_{1}, g_{2}, g_{3}, g_{4}, g_{5}\vert R\rangle$, since this groups are polycyclic, they have polycyclic presentation, so they satisfy:

$g_{i}^{r_{i}}=g_{i+1}^{a_{i+1}} \dots g_{5}^{a_{5}}$

$\left[g_{i}, g_{j}\right]=g_{j+1}^{a_{i+j, j+1}} \dots g_{5}^{a_{i+j, 5}}$.\newline
For the summery we omitted the relations:
$[g_{i}, g_{j}]=1$ and ${g_{i}}^{p}=1$ for any $i,j$ where $1\leqslant i, j \leqslant 5$. Then:

\begin{align*}
&G_{1}= \langle g_{1}, g_{2}, g_{3}, g_{4}, g_{5}\vert g_{1}^{p}=g_{2}, g_{2}^{p}=g_{3}, g_{3}^p=g_{4}, g_{4}^{p}=g_{5}\rangle \\
&G_{2}= \langle g_{1}, g_{2} , g_{3},g_{4},g_{5}\vert  [g_{2},g_{1}]=g_{3}, g_{1}^{p}=g_{4}, g_{2}^{p}=g_{5}\rangle\\
&G_{3}=   \langle g_{1}, g_{2}, g_{3}, g_{4}, g_{5}\vert  [g_{2},g_{1}]=g_{3},[g_{3},g_{1}]=g_{4}, [g_{3},g_{2}]=g_{5}\rangle\\
&G_{4} =  \langle g_{1}, g_{2}, g_{3}, g_{4}, g_{5}\vert[g_{2},g_{1}]=g_{3}, [g_{3},g_{1}]=g_{4}, [g_{3},g_{2}]=g_{5},  g_{2}^{p}=g_{5}\rangle \\
 &G_{5}= \langle g_{1}, g_{2}, g_{3}, g_{4}, g_{5}\vert[g_{2},g_{1}]=g_{3}, [g_{3},g_{1}]=g_{4}, [g_{3},g_{2}]=g_{5},  g_{2}^{p}=g_{4}\rangle \\
 &G_{6}= \langle g_{1}, g_{2}, g_{3}, g_{4}, g_{5}\vert[g_{2},g_{1}]=g_{3}, [g_{3},g_{1}]=g_{4}, [g_{3},g_{2}]=g_{5},  g_{2}^{p}=g_{4}^{w}\rangle \\
  &G_{7} = \langle g_{1}, g_{2}, g_{3}, g_{4}, g_{5}\vert[g_{2},g_{1}]=g_{3}, [g_{3},g_{1}]=g_{4}, [g_{3},g_{2}]=g_{5},  g_{1}^{p}=g_{4},  g_{2}^{p}=g_{5}\rangle\\
 &G_{8}=\langle g_{1}, g_{2}, g_{3}, g_{4}, g_{5}\vert[g_{2},g_{1}]=g_{3}, [g_{3},g_{1}]=g_{4}g_{5}, [g_{3},g_{2}]=g_{5},  g_{1}^{p}=g_{4},  g_{2}^{p}=g_{5}\rangle\\ &G_{9}=\langle g_{1}, g_{2}, g_{3}, g_{4}, g_{5}\vert[g_{2},g_{1}]=g_{3}, [g_{3},g_{1}]=g_{4}g_{5}^{w}, [g_{3},g_{2}]=g_{5},  g_{1}^{p}=g_{4},  g_{2}^{p}=g_{5}\rangle\\
 &G_{10}= \langle g_{1}, g_{2}, g_{3}, g_{4}, g_{5}\vert[g_{2},g_{1}]=g_{3}, [g_{3},g_{1}]=g_{5}^{w}, [g_{3},g_{2}]=g_{4},  g_{1}^{p}=g_{4},  g_{2}^{p}=g_{5}\rangle\\  &G_{11_{k}} = \langle g_{1}, g_{2}, g_{3}, g_{4}, g_{5}\vert[g_{2},g_{1}]=g_{3}, [g_{3},g_{1}]=g_{4}, [g_{3},g_{2}]=g_{5}^{w^{k}},  g_{1}^{p}=g_{4},  g_{2}^{p}=g_{5}\rangle\\
&G_{12_{k}} = \langle g_{1}, g_{2}, g_{3}, g_{4}, g_{5}\vert[g_{2},g_{1}]=g_{3}, [g_{3},g_{1}]=g_{4}g_{5}^{w^{k}}, [g_{3},g_{2}]=g_{4}^{w^{k-1}}g_{5},  g_{1}^{p}=g_{4},  g_{2}^{p}=g_{5}\rangle\\
&G_{13} = \langle g_{1}, g_{2}, g_{3}, g_{4}, g_{5}\vert g_{1}^{p}=g_{3}, g_{2}^{p}=g_{4}, g_{3}^{p}=g_{5}\rangle\cong C_{p^{3}}\times C_{p^{2}}\\
 &G_{14} = \langle g_{1}, g_{2}, g_{3}, g_{4}, g_{5}\vert [g_{2},g_{1}]=g_{5}, g_{1}^{p}=g_{3}, g_{2}^{p}=g_{4}, g_{3}^{p}=g_{5}\rangle\\
&G_{15} =  \langle g_{1}, g_{2}, g_{3}, g_{4}, g_{5}\vert [g_{2},g_{1}]=g_{3}, g_{1}^{p}=g_{4}, g_{4}^{p}=g_{5}\rangle\\
& G_{16} = \langle g_{1}, g_{2}, g_{3}, g_{4}, g_{5}\vert [g_{2},g_{1}]=g_{3}, g_{1}^{p}=g_{4}, [g_{3},g_{1}]=g_{5} \rangle\\
&G_{17} = \langle g_{1}, g_{2}, g_{3}, g_{4}, g_{5}\vert [g_{2},g_{1}]=g_{3}, g_{1}^{p}=g_{4}, [g_{3},g_{1}]=g_{5},g_{2}^{p}=g_{5} \rangle\\
 &G_{18}= \langle g_{1}, g_{2}, g_{3}, g_{4}, g_{5}\vert [g_{2},g_{1}]=g_{3}, g_{1}^{p}=g_{4}, [g_{3},g_{1}]=g_{5}^{w},g_{2}^{p}=g_{5} \rangle\\
&G_{19} = \langle g_{1}, g_{2}, g_{3}, g_{4}, g_{5}\vert [g_{2},g_{1}]=g_{3}, g_{1}^{p}=g_{4}, [g_{3},g_{1}]=g_{5}, g_{4}^{p}=g_{5} \rangle\\
 &G_{20}=  \langle g_{1}, g_{2}, g_{3}, g_{4}, g_{5}\vert [g_{2},g_{1}]=g_{3}, g_{1}^{p}=g_{4}, [g_{3},g_{1}]=g_{5}, [g_{3},g_{2}]=g_{5} \rangle\\
 &G_{21}= \langle g_{1}, g_{2}, g_{3}, g_{4}, g_{5}\vert [g_{2},g_{1}]=g_{3}, g_{1}^{p}=g_{4}, [g_{3},g_{1}]=g_{5}, [g_{3},g_{2}]=g_{5},  g_{2}^{p}=g_{5}\rangle\\
&G_{22}= \langle g_{1}, g_{2}, g_{3}, g_{4}, g_{5}\vert [g_{2},g_{1}]=g_{3}, g_{1}^{p}=g_{4}, [g_{3},g_{1}]=g_{5}, [g_{3},g_{2}]=g_{5},  g_{4}^{p}=g_{5}\rangle\\
 &G_{23}= \langle g_{1}, g_{2}, g_{3}, g_{4}, g_{5}\vert [g_{2},g_{1}]=g_{3}, g_{1}^{p}=g_{4}, [g_{3},g_{1}]=g_{5}^{w}, [g_{3},g_{2}]=g_{5}^{w},  g_{4}^{p}=g_{5}\rangle\\
&G_{24}=  \langle g_{1},  g_{2}, g_{3}, g_{4}, g_{5}\vert [g_{2},g_{1}]=g_{3},g_{1}^{p}=g_{4}, g_{2}^{p}=g_{3}, [g_{3},g_{1}]=g_{5}, [g_{4},g_{2}]=g_{5}^{p-1},  g_{3}^{p}=g_{5}\rangle\\
&G_{25} = \langle g_{1},  g_{2}, g_{3}, g_{4}, g_{5}\vert [g_{2},g_{1}]=g_{3},g_{1}^{p}=g_{4}, g_{2}^{p}=g_{3}, g_{4}^{p}=g_{5}\rangle\\
 &G_{26}=  \langle g_{1},  g_{2}, g_{3}, g_{4}, g_{5}\vert g_{1}^{p}=g_{3}, g_{3}^{p}=g_{4}, g_{4}^{p}=g_{5}\rangle\cong C_{p^{4}}\times C_{p}\\
&G_{27}=  \langle g_{1},  g_{2}, g_{3}, g_{4}, g_{5}\vert g_{1}^{p}=g_{3}, g_{3}^{p}=g_{4}, g_{4}^{p}=g_{5}, [g_{2},g_{1}]=g_{5}\rangle\\
&G_{28} =  \langle g_{1},  g_{2}, g_{3}, g_{4}, g_{5}\vert [g_{2},g_{1}]=g_{3}, [g_{3},g_{1}]=g_{4}, [g_{4},g_{1}]=g_{5}\rangle\\
 &G_{29_{a}}= \langle g_{1},  g_{2}, g_{3}, g_{4}, g_{5}\vert [g_{2},g_{1}]=g_{3}, [g_{3},g_{1}]=g_{4}, [g_{4},g_{1}]=g_{5}^{w^{a}}, g_{2}^{p}=g_{5}\rangle\\
&G_{30} =\langle g_{1},  g_{2}, g_{3}, g_{4}, g_{5}\vert [g_{2},g_{1}]=g_{3}, [g_{3},g_{1}]=g_{4}, [g_{4},g_{1}]=g_{5}, g_{1}^{p}=g_{5}\rangle\\
&G_{31}= \langle g_{1},  g_{2}, g_{3}, g_{4}, g_{5}\vert [g_{2},g_{1}]=g_{3}, [g_{3},g_{1}]=g_{4}, [g_{4},g_{1}]=g_{5},  [g_{3},g_{2}]=g_{5}\rangle\\
&G_{32_{a}}= \langle g_{1},  g_{2}, g_{3}, g_{4}, g_{5}\vert [g_{2},g_{1}]=g_{3}, [g_{3},g_{1}]=g_{4}, [g_{4},g_{1}]=g_{5}^{w^{a}},  [g_{3},g_{2}]=g_{5}^{w^{a}},g_{2}^{p}=g_{5}\rangle\\
& G_{33_{b}}=\langle g_{1},  g_{2}, g_{3}, g_{4}, g_{5}\vert [g_{2},g_{1}]=g_{3}, [g_{3},g_{1}]=g_{4}, [g_{4},g_{1}]=g_{5}^{w^{b}},  [g_{3},g_{2}]=g_{5}^{w^{b}},g_{1}^{p}=g_{5}\rangle\\
&G_{34} = \langle g_{1},  g_{2}, g_{3}, g_{4}, g_{5}\vert [g_{2},g_{1}]=g_{4}, [g_{3},g_{1}]=g_{5}\rangle\\
&G_{35}= \langle g_{1},  g_{2}, g_{3}, g_{4}, g_{5}\vert [g_{2},g_{1}]=g_{4}, g_{3}^{p}=g_{5}\rangle\\
 &G_{36}= \langle g_{1},  g_{2}, g_{3}, g_{4}, g_{5}\vert [g_{2},g_{1}]=g_{4}, [g_{3},g_{2}]=g_{5}, g_{3}^{p}=g_{5}\rangle\\
&G_{37} =\langle g_{1},  g_{2}, g_{3}, g_{4}, g_{5}\vert [g_{3},g_{2}]=g_{4}, g_{3}^{p}=g_{5}\rangle\\
&G_{38} =\langle g_{1},  g_{2}, g_{3}, g_{4}, g_{5}\vert [g_{3},g_{1}]=g_{4}, [g_{3},g_{2}]=g_{5}, g_{3}^{p}=g_{5}\rangle\\
&G_{39} =\langle g_{1},  g_{2}, g_{3}, g_{4}, g_{5}\vert [g_{2},g_{1}]=g_{4}, [g_{3},g_{2}]=g_{5}, g_{3}^{p}=g_{4}\rangle\\
&G_{40}=\langle g_{1},  g_{2}, g_{3}, g_{4}, g_{5}\vert [g_{3},g_{2}]=g_{5}, g_{2}^{p}=g_{4}, g_{3}^{p}=g_{5}\rangle\\
&G_{41}= \langle g_{1},  g_{2}, g_{3}, g_{4}, g_{5}\vert [g_{3},g_{1}]=g_{4}, [g_{3},g_{2}]=g_{5}, g_{2}^{p}=g_{4}, g_{3}^{p}=g_{5}\rangle\\
&G_{42}=\langle g_{1},  g_{2}, g_{3}, g_{4}, g_{5}\vert [g_{2},g_{1}]=g_{4}, [g_{3},g_{2}]=g_{5}, g_{2}^{p}=g_{4}, g_{3}^{p}=g_{5}\rangle\\
&G_{43}=\langle g_{1},  g_{2}, g_{3}, g_{4}, g_{5}\vert  g_{2}^{p}=g_{4}, g_{3}^{p}=g_{5}\rangle\\
&G_{44}=\langle g_{1},  g_{2}, g_{3}, g_{4}, g_{5}\vert [g_{2},g_{1}]=g_{4}, [g_{3},g_{1}]=g_{5}, g_{2}^{p}=g_{4}, g_{3}^{p}=g_{5}\rangle\\
 &G_{45} =\langle g_{1},  g_{2}, g_{3}, g_{4}, g_{5}\vert [g_{2},g_{1}]=g_{5},  g_{2}^{p}=g_{4}, g_{3}^{p}=g_{5}\rangle\\
&G_{46} =\langle g_{1},  g_{2}, g_{3}, g_{4}, g_{5}\vert [g_{2},g_{1}]=g_{4}g_{5},[g_{3},g_{1}]=g_{5} , g_{2}^{p}=g_{4}, g_{3}^{p}=g_{5}\rangle\\
& G_{47}=\langle g_{1},  g_{2}, g_{3}, g_{4}, g_{5}\vert [g_{3},g_{1}]=g_{5}, g_{2}^{p}=g_{4}, g_{3}^{p}=g_{5}\rangle\\
&G_{48_{k}} =\langle g_{1},  g_{2}, g_{3}, g_{4}, g_{5}\vert [g_{2},g_{1}]=g_{4}, [g_{3},g_{1}]=g_{5}^{w^{k}},   g_{2}^{p}=g_{4}, g_{3}^{p}=g_{5}\rangle\\
 &G_{49}= \langle g_{1},  g_{2}, g_{3}, g_{4}, g_{5}\vert [g_{2},g_{1}]=g_{5}^{w^{k}}, [g_{3},g_{1}]=g_{4},   g_{2}^{p}=g_{4}, g_{3}^{p}=g_{5}\rangle\\
&G_{50_{k}}=\langle g_{1},  g_{2}, g_{3}, g_{4}, g_{5}\vert [g_{2},g_{1}]=g_{4}g_{5}^{w^{k}}, [g_{3},g_{1}]=g_{4}^{w^{k-1}}g_{5},   g_{2}^{p}=g_{4}, g_{3}^{p}=g_{5}\rangle\\
&G_{51}=\langle g_{1},  g_{2}, g_{3}, g_{4}, g_{5}\vert   g_{1}^{p}=g_{4}, g_{4}^{p}=g_{5}\rangle\\
 &G_{52}=\langle g_{1},  g_{2}, g_{3}, g_{4}, g_{5}\vert   g_{1}^{p}=g_{4}, g_{4}^{p}=g_{5}, [g_{3},g_{2}]=g_{5}\rangle\\
 &G_{53}=\langle g_{1},  g_{2}, g_{3}, g_{4}, g_{5}\vert   g_{1}^{p}=g_{4}, g_{4}^{p}=g_{5}, [g_{3},g_{1}]=g_{5}\rangle\\
&G_{54} =\langle g_{1},  g_{2}, g_{3}, g_{4}, g_{5}\vert  [g_{2},g_{1}]=g_{4}, [g_{4},g_{2}]=g_{5}\rangle\\
&G_{55}=\langle g_{1},  g_{2}, g_{3}, g_{4}, g_{5}\vert  [g_{2},g_{1}]=g_{4}, [g_{4},g_{2}]=g_{5},  g_{3}^{p}=g_{5}\rangle\\
&G_{56} =\langle g_{1}, g_{2}, g_{3}, g_{4}, g_{5}\vert  [g_{2},g_{1}]=g_{4}, [g_{4},g_{2}]=g_{5},  g_{2}^{p}=g_{5}\rangle\\
 &G_{57}= \langle g_{1}, g_{2}, g_{3}, g_{4}, g_{5}\vert  [g_{2}, g_{1}]=g_{4}, [g_{4},g_{2}]=g_{5},  g_{1}^{p}=g_{5}\rangle\\
&G_{58}=\langle g_{1}, g_{2}, g_{3}, g_{4}, g_{5}\vert  [g_{2}, g_{1}]=g_{4}, [g_{4},g_{2}]=g_{5}^{w},  g_{1}^{p}=g_{5}\rangle\\
&G_{59}= \langle g_{1}, g_{2}, g_{3}, g_{4}, g_{5}\vert  [g_{2}, g_{1}]=g_{4}, [g_{4},g_{2}]=g_{5}, [g_{3}, g_{1}]=g_{5}\rangle\\
&G_{60}=\langle g_{1}, g_{2}, g_{3}, g_{4}, g_{5}\vert  [g_{2}, g_{1}]=g_{4}, [g_{4},g_{2}]=g_{5}, [g_{3},g_{1}]=g_{5}, g_{3}^{p}=g_{5}\rangle\\
&G_{61} =\langle g_{1},  g_{2}, g_{3}, g_{4}, g_{5}\vert  [g_{2}, g_{1}]=g_{4}, [g_{4}, g_{2}]=g_{5}, [g_{3},g_{1}]=g_{5}, g_{2}^{p}=g_{5}\rangle\\
&G_{62}= \langle g_{1},  g_{2}, g_{3}, g_{4}, g_{5}\vert  [g_{2}, g_{1}]=g_{4}, [g_{4}, g_{2}]=g_{5}, [g_{3},g_{1}]=g_{5}, g_{1}^{p}=g_{5}\rangle\\
&G_{63}=\langle g_{1},  g_{2}, g_{3}, g_{4}, g_{5}\vert  [g_{2}, g_{1}]=g_{4}, [g_{4}, g_{2}]=g_{5}^{w}, [g_{3},g_{1}]=g_{5}^{w}, g_{1}^{p}=g_{5}\rangle\\
&G_{64}=\langle g_{1},  g_{2}, g_{3}, g_{4}, g_{5}\vert  [g_{2}, g_{1}]=g_{5}\rangle\\
 &G_{65}= \langle g_{1},  g_{2}, g_{3}, g_{4}, g_{5}\vert  [g_{2}, g_{1}]=g_{5},  [g_{4}, g_{3}]=g_{5}\rangle\\
&G_{66}= \langle g_{1},  g_{2}, g_{3}, g_{4}, g_{5}\vert  g_{1}^{p}=g_{5} \rangle\\
 &G_{67}= \langle g_{1},  g_{2}, g_{3}, g_{4}, g_{5}\vert   [g_{2}, g_{1}]=g_{5}, g_{1}^{p}=g_{5} \rangle\\
&G_{68}= \langle g_{1},  g_{2}, g_{3}, g_{4}, g_{5}\vert   [g_{2}, g_{1}]=g_{5}, g_{3}^{p}=g_{5} \rangle\\
&G_{69}= \langle g_{1},  g_{2}, g_{3}, g_{4}, g_{5}\vert   [g_{2}, g_{1}]=g_{5},  [g_{4}, g_{3}]=g_{5}, g_{4}^{p}=g_{5} \rangle\\
 &G_{70}\cong C_{p}^5
\end{align*}
\end{thm}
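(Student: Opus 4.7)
The plan is to proceed by stratifying by order and nilpotency class. Since $|G|$ divides $p^5$ we have $|G|=p^n$ with $n\in\{0,1,2,3,4,5\}$; for $n\le 4$ the classification is classical (and $G_{70}\cong C_p^5$ together with the abelian cases $G_{13},G_{26}$ covers the elementary abelian or partially abelian pieces), so the real work is the case $n=5$ with nonabelian $G$. For such a $G$ the nilpotency class satisfies $c=c(G)\in\{1,2,3,4\}$, and each class forces a rigid shape on the lower central series $G=\gamma_1\rhd\gamma_2\rhd\cdots\rhd\gamma_{c+1}=1$, because the factors $\gamma_i/\gamma_{i+1}$ are elementary abelian and must have total $\mathbb F_p$-dimension $5$. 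I would enumerate the finitely many admissible dimension sequences (for instance, class $4$ forces the pattern $(2,1,1,1)$; class $3$ allows $(2,2,1)$ or $(3,1,1)$; etc.) and in each case choose a minimal generating set compatible with the filtration.

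Next, for each admissible shape I would write down a polycyclic presentation with generators $g_1,\dots,g_5$ adapted to the filtration, as in the statement: every power relation $g_i^p$ and every commutator $[g_i,g_j]$ is rewritten in normal form as a word in $g_{i+1},\dots,g_5$, with exponents lying in $\mathbb F_p$. The associativity and Jacobi-type constraints coming from $[[g_i,g_j],g_k]$ and from expanding $(g_ig_j)^p$ (Hall–Petrescu) cut the free parameter space drastically. What remains is a family of presentations indexed by finitely many scalars in $\mathbb F_p$, which I would reduce by applying the admissible substitutions $g_i\mapsto\prod g_j^{b_{ij}}$ preserving the filtration. This is precisely the passage from the $p$-cover down to the descendants in the $p$-group generation algorithm of Newman and O'Brien; I would, however, do it by hand here to match the exact presentations in the statement.

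The hard part is the isomorphism problem, i.e.\ showing that the $70$ (resp.\ parametric) presentations are pairwise non-isomorphic and that the listed parameter restrictions ($a\in w_3$, $b\in w_4$, $k\in\{1,\dots,(p-1)/2\}$) are sharp. I would separate most groups by coarse invariants: abelianization type, $|G'|$, $|Z(G)|$, nilpotency class, exponent, and the isomorphism type of $\gamma_2/\gamma_3$. For the parametric families $G_{11_k},G_{12_k},G_{29_a},G_{32_a},G_{33_b},G_{48_k},G_{50_k}$, the remaining ambiguity is controlled by the action of $\mathrm{Aut}(G)$ on the parameter via powers of a primitive root $w$: one shows that rescaling $g_1,g_2$ by $w^s,w^t$ conjugates the parameter by $w^{\alpha s+\beta t}$ for integers $\alpha,\beta$ read off the commutator weights, and the equivalence classes of this action on $\mathbb F_p^\times$ recover precisely the index sets $w_3$, $w_4$, and $\{1,\dots,(p-1)/2\}$.

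Finally I would verify completeness by counting. For $p>3$ the number of isomorphism classes of groups of order $p^5$ is the well-known polynomial in $p$ (expanding to $61+2p+2\gcd(p-1,3)+\gcd(p-1,4)$), and I would check that summing $1$ for each non-parametric $G_i$ in the list, $(p-1)/2$ for each of $G_{11_k},G_{12_k},G_{48_k},G_{50_k}$, $3$ for each of $G_{29_a},G_{32_a}$, and $4$ for $G_{33_b}$ produces exactly that total. Together with the $n<5$ classification, this would confirm that the list in the statement is exhaustive and irredundant.
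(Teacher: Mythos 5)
You should first note that the paper does not prove this statement at all: Theorem \ref{main} is imported verbatim from the cited classification of Girant (itself descending from the classical enumerations of groups of order $p^5$ and the $p$-group generation algorithm), so there is no in-paper argument to compare yours against. Your outline is a faithful description of how such classifications are in fact produced --- stratify by order and nilpotency class, fix the shape of the lower central series, write consistent polycyclic presentations, and reduce parameters under filtration-preserving automorphisms --- so as a strategy it is sound and standard.

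However, as a proof it has a genuine gap: essentially every step that carries mathematical content is deferred. The consistency (Hall--Petrescu) analysis that ``cuts the parameter space drastically,'' the reduction of the surviving scalars under substitutions $g_i\mapsto\prod g_j^{b_{ij}}$, the pairwise non-isomorphism of the $70$ families, and the sharpness of the index sets $w_3$, $w_4$, $\{1,\dots,\frac{p-1}{2}\}$ are all announced rather than carried out, and these are precisely where the difficulty (and the possibility of error) lies. Two specific points need repair even at the level of the plan. First, your completeness check via the counting polynomial $61+2p+2\gcd(p-1,3)+\gcd(p-1,4)$ is circular unless you supply an independent derivation of that count, since historically the polynomial is a corollary of the classification you are trying to verify; it is a useful sanity check, not a proof of exhaustiveness. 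Second, your tally assigns ``$3$'' to each of $G_{29_a},G_{32_a}$ and ``$4$'' to $G_{33_b}$, but the parameter sets $w_3=\{x:x^3=1\}$ and $w_4=\{x:x^4=1\}$ have $\gcd(p-1,3)$ and $\gcd(p-1,4)$ elements respectively, which is $1$ or $3$ (resp.\ $2$ or $4$) depending on $p$; the arithmetic only closes when you use these gcd values, matching the terms of the polynomial. With those caveats the skeleton is right, but a complete argument would have to execute the case analysis in full, which is exactly why the paper (reasonably) cites the classification rather than reproving it.
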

This paper is devoted  to obtain the structure of $G\wedge G, Z^{\wedge}(G), G\otimes G, Z^{\otimes}(G),$\newline $\Pi_{3}(SK(G,1))$ the third homotopy group of suspension of an Eilenberg-MacLain space $k(G,1)$ and  $\triangledown(G)$ for all groups of order dividing $p^5$.\newline The authors in \cite{Ghorbanzadeh} obtained the structure of $ G\wedge G, Z^{\wedge}(G), G\otimes G, Z^{\otimes}(G)$ and $\Pi_{3}(SK(G,1))$ for all groups of order $p^4$.\newline The same motivation allows us to write the present article. It is instructive to note that  at the same time and in process of preparation and finalizing  this article we noticed \cite{Hatui}. \newline But it is worth noting however Hatui et al. have the same results originated from the classification of James in \cite{James}, they claimed the presentation of their article  is a bit unusual while we obtained our results straightforward.\newline Our work is  more shorter and the readers can find it more understandable while we get the results with different argument. Just to obtain the non-abelian exterior square of groups $G_{2}, G_{10}, G_{11,2}, G_{17}, G_{18}, G_{28}, G_{31}, G_{40}, G_{41}, G_{45},  G_{48,2}$  and $ G_{49}$ due to same argument, we prefer to refer to \cite{Hatui}.
\newline
The following theorems from \cite{Horn} give us the structure of the Schur multiplier and the epicenter of all groups of order dividing $p^5$.
\begin{thm} \label{Schur}
 Let $G$ be a group of order dividing $p^5$ where $p>3$ is an odd prime. Then
  \[\mathcal{M}(G)\cong\left \{ \begin{array}{ll}
     1 & \textrm{if $G \cong$ }~\textrm{$G_{1},$}~\textrm{$G_{7},\dots ,G_{9}, $}~\textrm{$G_{11,1},$}~\textrm{$G_{12_{k}},$}~\textrm{$G_{24},$}~\textrm{$G_{27}.$}\\
      \mathbb{Z}_{p} & \textrm{if $G \cong$ }~\textrm{$G_{4},\dots ,G_{6}, $}~\textrm{$G_{10},$}~\textrm{$G_{11,2},$}~\textrm{$G_{12_{k}},$}~\textrm{$G_{19},$}~\textrm{$G_{21},$}~\textrm{$G_{22},$}~\textrm{$G_{23},$}\\ &~\textrm{$G_{25},$}~\textrm{$G_{29_{a}}, $}~\textrm{$G_{30}, $}~\textrm{$G_{32_{a}}, $}~\textrm{$G_{33_{b}}, $}~\textrm{$G_{42}, $}~\textrm{$G_{44}, $}~\textrm{$G_{46}, $}~\textrm{$G_{48,1}, $}~\textrm{$G_{50_{k}}. $}
      \\ \mathbb{Z}_{p}^{(2)} & \textrm{if $G \cong$ }~\textrm{$G_{15},\dots ,G_{18}, $}~\textrm{$G_{20},$}~\textrm{$G_{41},$}~\textrm{$G_{47}, $}~\textrm{$G_{52}, $}~\textrm{$G_{53}. $}\\
       \mathbb{Z}_{p}^{(3)}  & \textrm{if $G \cong$ }~\textrm{$G_{2},$}~\textrm{$G_{3},$}~\textrm{$G_{28},$}~\textrm{$G_{31},$}~\textrm{$G_{36},$}~\textrm{$G_{38},$}~\textrm{$G_{39},$}~\textrm{$G_{40},$}~\textrm{$G_{51},$}\\ &~\textrm{$G_{55},\dots ,G_{58}, $}~\textrm{$G_{60},\dots ,G_{63}. $}\\
   \mathbb{Z}_{p}^{(4)} &\textrm{if $G \cong$ }~\textrm{$G_{35},$}~\textrm{$G_{37},$}~\textrm{$G_{54},$}~\textrm{$G_{59}.$}\\
   \mathbb{Z}_{p}^{(5)}  & \textrm{if $G \cong$ }~\textrm{$G_{65},$}~\textrm{$G_{67},\dots ,G_{69}. $}\\
    \mathbb{Z}_{p}^{(6)}  & \textrm{if $G \cong$ }~\textrm{$G_{34},$}~\textrm{$G_{66}. $}\\
     \mathbb{Z}_{p}^{(7)}  & \textrm{if $G \cong$ }~\textrm{$G_{64}.$}\\
      \mathbb{Z}_{p}^{(10)}  & \textrm{if $G \cong$}~\textrm{$G_{70}.$}\\
      \mathbb{Z}_{p^2}  & \textrm{if $G \cong$ }~\textrm{$G_{13},$}~\textrm{$G_{48,2}, $}~\textrm{$G_{49}.$}\\
       \mathbb{Z}_{p^2}\oplus \mathbb{Z}_{p} & \textrm{if $G \cong$ }~\textrm{$G_{45}.$}\\
       \mathbb{Z}_{p^2}\oplus \mathbb{Z}_{p}^{(2)}  & \textrm{if $G \cong$ }~\textrm{$G_{43}.$}\\
\end{array} \right.  \]
\end{thm}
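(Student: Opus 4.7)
The plan is to treat the seventy-odd cases in three passes. First, the abelian groups $G_{13}\cong C_{p^3}\times C_{p^2}$, $G_{26}\cong C_{p^4}\times C_p$, and $G_{70}\cong C_p^{5}$ (and $G_{43}, G_{66}$, after reading off their abelianizations directly from the presentations) are handled by the classical formula
\[ \mathcal{M}\!\left(\bigoplus_{i}\mathbb{Z}_{p^{n_i}}\right)\cong \bigoplus_{i<j}\mathbb{Z}_{p^{\min(n_i,n_j)}}, \]
which at once yields $\mathcal{M}(G_{13})\cong \mathbb{Z}_{p^{2}}$, $\mathcal{M}(G_{26})\cong \mathbb{Z}_p$ and $\mathcal{M}(G_{70})\cong \mathbb{Z}_p^{(10)}$. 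Second, for the non-abelian groups I would apply Hopf's formula $\mathcal{M}(G)\cong (R\cap F')/[F,R]$ to the polycyclic presentations of Theorem~\ref{main}: since each $G_i$ is given on five generators with an explicit finite list of relators, $R/[F,R]$ is a finitely generated abelian group whose structure is transparent, and the image of $R\cap F'$ is pinned down by checking which relators project trivially into $F/F'$.

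Third, to avoid redoing the Hopf calculation from scratch for every $i$ I would exploit the central extensions already visible in the presentations. For almost every $G_i$ of order $p^5$, the subgroup $N=\langle g_5\rangle$ (or $N=\langle g_4,g_5\rangle$) is central and $Q=G/N$ has order dividing $p^4$, whose Schur multiplier is recorded in \cite{Ghorbanzadeh}. The five-term exact sequence of the central extension $1\to N\to G\to Q\to 1$,
\[ \mathcal{M}(G)\longrightarrow \mathcal{M}(Q)\longrightarrow N\longrightarrow G^{ab}\longrightarrow Q^{ab}\longrightarrow 1, \]
combined with the standard upper bound $|\mathcal{M}(G)|\le p^{d(d-1)/2}$ for a $d$-generator $p$-group, typically narrows $\mathcal{M}(G)$ down to a very small list of candidates. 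Comparing abelianizations and commutator structure then selects the correct isomorphism type; in particular, any time $\mathcal{M}(Q)$ already equals the Hopf bound one can conclude directly.

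The main obstacle will be the parametric families $G_{11_k}, G_{12_k}, G_{29_a}, G_{32_a}, G_{33_b}, G_{48_k}, G_{50_k}$ and the twin pairs distinguished only by a primitive root $w$ (such as $G_5$ vs $G_6$, $G_{17}$ vs $G_{18}$, $G_{22}$ vs $G_{23}$). In each such pair the abelianization and the order of the derived subgroup coincide, so neither the generator bound nor the five-term sequence alone separates the cases. To handle these I would construct an explicit stem cover $E\twoheadrightarrow G$ by introducing an auxiliary central generator absorbing the candidate relation, and verify by direct computation in $E$ whether each lifted relator genuinely lies in $[F,R]$ or produces a non-trivial central element of order $p$. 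A useful consistency check throughout is supplied by the commutative diagram in the introduction: the final answer for $\mathcal{M}(G)$ must fit as the kernel of $\kappa'\colon G\wedge G\to G'$ with $|G\wedge G|=|G'|\,|\mathcal{M}(G)|$, a relation that can be verified a posteriori against the tensor and exterior squares computed elsewhere in the paper.
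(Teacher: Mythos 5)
First, a point of comparison: the paper does not prove this statement at all --- Theorem \ref{Schur} is imported verbatim from \cite{Horn} (Horn--Zandi, who compute these multipliers via the Lazard correspondence), so any proof you give is necessarily a different route. Your first pass is fine: the abelian cases $G_{1}, G_{13}, G_{26}, G_{43}, G_{51}, G_{66}, G_{70}$ really do follow at once from $\mathcal{M}\bigl(\bigoplus_i\mathbb{Z}_{p^{n_i}}\bigr)\cong\bigoplus_{i<j}\mathbb{Z}_{p^{\min(n_i,n_j)}}$, and the numbers you quote match the statement.

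The gap is in the non-abelian passes, and it is twofold. The ``standard upper bound $|\mathcal{M}(G)|\le p^{d(d-1)/2}$ for a $d$-generator $p$-group'' is not a theorem: the extraspecial group of order $p^{3}$ and exponent $p$ is $2$-generated with $\mathcal{M}\cong\mathbb{Z}_p^{(2)}$, and indeed $G_{64}$ in the very statement you are proving is $4$-generated with $\mathcal{M}(G_{64})\cong\mathbb{Z}_p^{(7)}$, exceeding $p^{6}$. Moreover the five-term sequence is exact at $\mathcal{M}(Q)$ but gives no control on the kernel of $\mathcal{M}(G)\to\mathcal{M}(Q)$, so by itself it only bounds $|\mathcal{M}(G)|$ from \emph{below}; to bound it from above for a central extension you need the Ganea term, i.e.\ the extended sequence $G^{ab}\otimes N\to\mathcal{M}(G)\to\mathcal{M}(Q)\to N\to G^{ab}\to Q^{ab}\to 1$. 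Without that ingredient the claim that the sequence ``narrows $\mathcal{M}(G)$ down to a very small list'' does not follow. Secondly, the cases where the answer genuinely changes with the parameter --- $G_{11,1}$ versus $G_{11,2}$ (multiplier $1$ versus $\mathbb{Z}_p$) and $G_{48,1}$ versus $G_{48,2}$ ($\mathbb{Z}_p$ versus $\mathbb{Z}_{p^2}$) --- are exactly the ones you defer to an unexecuted ``direct computation'' in a stem cover; since that is where all the content lies, the proposal as written is a strategy rather than a proof. (By contrast, several of the ``twin pairs'' you single out, e.g.\ $G_{5}$/$G_{6}$ and $G_{22}$/$G_{23}$, are assigned the same multiplier in the statement, so no separation is actually required there.) The a posteriori check $|G\wedge G|=|G'|\,|\mathcal{M}(G)|$ is a good sanity test but is circular here, since the paper's exterior squares are themselves derived from Theorem \ref{Schur}.
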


%%%%%%%%%%%%%%%%%%%%%%%%%%%%%5
\begin{thm}
 \label{1.6}
 Let $G$ be a group of order dividing $p^5$ where $p>3$ is an odd prime. Then
  \[Z^{*}(G)\cong\left \{ \begin{array}{ll}
     1 & \textrm{if $G \cong$ }~\textrm{$G_{2},$}~\textrm{$G_{3}$}~\textrm{$G_{10},$}~\textrm{$G_{11,2},$}~\textrm{$G_{17},$}~\textrm{$G_{18},$}~\textrm{$G_{28},$}~\textrm{$G_{31},$}~\textrm{$G_{34},$}~\textrm{$G_{40},$}~\textrm{$G_{41},$}\\ &~\textrm{$G_{43},$}~\textrm{$G_{45},$}~\textrm{$G_{48,2},$}~\textrm{$G_{49},$}~\textrm{$G_{54},$}~\textrm{$G_{59},$}~\textrm{$G_{17},$}~\textrm{$G_{64},$}~\textrm{$G_{70}.$}\\
      \mathbb{Z}_{p} & \textrm{if $G \cong$ }~\textrm{$G_{4},\dots ,G_{6}, $}~\textrm{$G_{10},$}~\textrm{$G_{14},$}~\textrm{$G_{16},$}~\textrm{$G_{25},$}~\textrm{$G_{29_{a}},$}~\textrm{$G_{30},$}\\ &~\textrm{$G_{32_{a}},$}~\textrm{$G_{33_{b}},$}~\textrm{$G_{35},$}~\textrm{$G_{36},\dots ,G_{39}, $}~\textrm{$G_{55},\dots ,G_{58}, $}~\textrm{$G_{61},\dots ,G_{63}, $}~\textrm{$G_{65},\dots ,G_{69}. $}
      \\ \mathbb{Z}_{p}^{(2)} & \textrm{if $G \cong$ }~\textrm{$G_{7},\dots ,G_{9}, $}~\textrm{$G_{11,1},$}~\textrm{$G_{12_{k}},$}~\textrm{$G_{21}, $}~\textrm{$G_{42}, $}~\textrm{$G_{44}, $}~\textrm{$G_{46}, $}~\textrm{$G_{47}, $}~\textrm{$G_{48,1}, $}\\ &~\textrm{$G_{50_{k}}.$}\\
     \mathbb{Z}_{p^2}  & \textrm{if $G \cong$ }~\textrm{$G_{15},$}~\textrm{$G_{20}, $}~\textrm{$G_{22}, $}~\textrm{$G_{23}, $}~\textrm{$G_{51},\dots ,G_{53}. $}\\
       \mathbb{Z}_{p^3} & \textrm{if $G \cong$ }~\textrm{$G_{26}, $}~\textrm{$G_{27}.$}\\
       \mathbb{Z}_{p^5}  & \textrm{if $G \cong$ }~\textrm{$G_{70}.$}\\
\end{array} \right.  \]
 \end{thm}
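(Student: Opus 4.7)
The plan is to exploit the isomorphism $Z^{*}(G)\cong Z^{\wedge}(G)$ of Lemma \ref{2.1}, so that the problem reduces to identifying the exterior center inside the ordinary center of each of the seventy groups in Theorem \ref{main}. For each group $G_i$ I would first read off $Z(G_i)$ directly from its polycyclic presentation: since every relation is either a $p$-th power relation or a commutator relation expressing generators $g_3,g_4,g_5$ as derived or power elements, determining which monomials $g_1^{a_1}\cdots g_5^{a_5}$ commute with every generator is a short linear computation in the exponents modulo $p$. This gives a finite list of candidate subgroups in which $Z^{*}(G_i)$ must lie.

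Next I would use the standard characterization that $Z^{*}(G)$ is the smallest central subgroup $N\trianglelefteq G$ with $N\le Z(G)$ such that the quotient $G/N$ is capable (equivalently, $Z^{\wedge}(G/N)=1$). To obtain an upper bound $Z^{*}(G_i)\le N$, it suffices to exhibit a central extension $1\to K\to E\to G_i/N\to 1$ with $K\le Z(E)$ realising $G_i/N$ as $E/Z(E)$; for most groups in the list this is achieved by producing $E$ as another group from the same classification, or as a central product of such. To obtain the matching lower bound, I would use the Stallings–Stammbach five-term sequence
\[\mathcal{M}(G)\longrightarrow \mathcal{M}(G/N)\longrightarrow N/[G,N]\longrightarrow G/G'\longrightarrow G/NG'\longrightarrow 0,\]
together with the explicit Schur multipliers recorded in Theorem \ref{Schur}: if $G_i/N$ is capable then the epicenter must be contained in $N$, and conversely a strict inclusion $Z^{*}(G_i)\lneq N$ would force a compatible jump in the multiplier that contradicts the table.

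Once this machinery is in place, the verification is done group by group. The cases $G_{13}$, $G_{26}$, $G_{27}$, $G_{51}$–$G_{53}$, $G_{70}$ and the other abelian or near-abelian groups are handled first, since for an abelian $p$-group of type $(p^{a_1},\ldots,p^{a_r})$ with $a_1\ge a_2\ge\cdots$ the epicenter is the cyclic factor of largest order collapsed appropriately, giving the values $\mathbb{Z}_{p^{3}}$, $\mathbb{Z}_{p^{5}}$ and so on. For the nilpotent non-abelian cases $G_2,G_3,G_{10},G_{11,2},\ldots$ one notices that $G_i\cong E_i/Z(E_i)$ for a visibly larger group $E_i$ in the same classification, which proves $Z^{*}(G_i)=1$; in the remaining cases, the fact that a single central generator (typically $g_5$ or $g_4$) survives every central extension forces $Z^{*}(G_i)\cong\mathbb{Z}_p$ or $\mathbb{Z}_p^{(2)}$ as listed.

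The main obstacle, as in the parallel analysis of order $p^4$ carried out in \cite{Ghorbanzadeh}, is not any single deep step but the sheer combinatorial bookkeeping: seventy presentations must be dissected, their centers pinned down, and a capable quotient exhibited for each, while making sure the Schur multiplier constraints from Theorem \ref{Schur} are respected. Particular care is required for the parametrised families $G_{11_k}$, $G_{12_k}$, $G_{29_a}$, $G_{32_a}$, $G_{33_b}$, $G_{48_k}$ and $G_{50_k}$, where the epicenter could a priori depend on the parameter; verifying that it does not (i.e.\ that the answer is uniform in $a,b,k$) is where most of the delicate work is concentrated, and where, as noted in the introduction, some cases can be shortened by appealing to the parallel calculations in \cite{Hatui}.
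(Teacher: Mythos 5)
There is an important mismatch of expectations here: the paper does not prove this theorem at all. Theorem \ref{1.6} (like Theorem \ref{Schur}) is imported verbatim from the reference of Horn and Zandi on computing Schur multipliers via the Lazard correspondence, and is introduced only by the sentence ``The following theorems from [Horn] give us the structure of the Schur multiplier and the epicenter of all groups of order dividing $p^5$.'' So there is no in-paper argument to compare yours against; the authors treat the epicenter table as known input, and everything downstream (Corollary \ref{epicenter}, the $Z^{\otimes}$ computations, Table 2) is derived from it via Lemma \ref{2.1}.

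Judged on its own terms, your proposal is a reasonable strategy outline but it is not a proof: no single one of the seventy cases is actually carried out. The two load-bearing steps are stated only as intentions. For the upper bound you say a capable quotient ``is achieved by producing $E$ as another group from the same classification'' without exhibiting any such $E$; for the lower bound you invoke the five-term sequence but never state the precise criterion you need (for $N\le Z(G)$ one has $N\le Z^{*}(G)$ if and only if the natural map $\mathcal{M}(G)\to\mathcal{M}(G/N)$ is injective — the Beyl--Felgner--Schmid test), so the phrase ``a compatible jump in the multiplier that contradicts the table'' has no checkable content. For the abelian cases you should replace ``the cyclic factor of largest order collapsed appropriately'' by the exact statement $Z^{*}(\mathbb{Z}_{p^{a_1}}\times\cdots\times\mathbb{Z}_{p^{a_r}})\cong\mathbb{Z}_{p^{a_1-a_2}}$ for $a_1\ge a_2\ge\cdots$, which immediately gives $\mathbb{Z}_{p^{5}}$ for $G_{1}$, $\mathbb{Z}_{p}$ for $G_{13}$, $\mathbb{Z}_{p^{3}}$ for $G_{26}$, and triviality for $G_{70}$. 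Carrying out even these easy cases would also have exposed that the statement as printed is internally inconsistent ($G_{70}$ is listed both with $Z^{*}=1$ and with $Z^{*}=\mathbb{Z}_{p^5}$, where the latter entry must be $G_{1}$; $G_{10}$ and $G_{17}$ each appear in two rows), which any genuine case-by-case verification must resolve. Until at least a representative selection of the non-abelian cases — in particular the parametrised families $G_{11_k}$, $G_{12_k}$, $G_{29_a}$, $G_{32_a}$, $G_{33_b}$, $G_{48_k}$, $G_{50_k}$ whose uniformity in the parameter you correctly flag as the delicate point — is actually computed, the proposal remains a plan rather than a proof.
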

%%%%%%%%%%%%%%%%%%%%
%%%%%%%%%%%%%%%%%%%%%%%%%%%%%%%555
\section{$\triangledown(G)$ of all groups of order dividing $p^5$}
In this section, we are going to obtain the structure of $\triangledown(G)$ of groups of order dividing $p^5$ by the following theorems

%%%%%%%%%%%%%%%%%%%%%%%%%%%%%%%%
\begin{thm}\cite[Theorem 1.3. (iii)]{Blyth} \label{2.1}
If $G^{ab}$ has no elements of order $2$, then $\triangledown(G)\cong \Gamma (G^{ab}).$
\end {thm}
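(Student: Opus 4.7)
The approach is to construct the obvious natural map $\phi : \Gamma(G^{\mathrm{ab}}) \to \triangledown(G)$ sending $\gamma(\bar g) \mapsto g \otimes g$, show it is a well-defined surjection, and then use the $2$-torsion hypothesis to force injectivity. Recall that $\triangledown(G)$ is the subgroup of $G \otimes G$ generated by the diagonal elements $\{g \otimes g : g \in G\}$; surjectivity of $\phi$ will therefore be automatic, so the substantive content is (a) establishing that $\phi$ is even defined, and (b) ruling out a kernel.

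To see $\phi$ is well-defined I would invoke the universal property of the Whitehead functor, which requires checking three facts inside $\triangledown(G)$: first, that $g \otimes g$ depends only on the class $\bar g \in G^{\mathrm{ab}}$, i.e.\ that $(gc) \otimes (gc) = g \otimes g$ for every $c \in G'$; second, that $g \otimes g = g^{-1} \otimes g^{-1}$; and third, the multiplicative version of the cube identity $\gamma(a+b+c)\,\gamma(a)\,\gamma(b)\,\gamma(c) = \gamma(a+b)\,\gamma(a+c)\,\gamma(b+c)$. Each of these is verified by expanding the given diagonal elements with the two defining relations $gg' \otimes h = ({}^g g' \otimes {}^g h)(g \otimes h)$ and $g \otimes hh' = (g \otimes h)({}^h g \otimes {}^h h')$ of the non-abelian tensor square and absorbing terms that lie in the kernel $\triangledown(G)$. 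The first two identities are short; the cube identity is the most computationally involved but is a direct manipulation once one keeps track of commutator corrections.

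The main obstacle is injectivity, and this is precisely where the assumption that $G^{\mathrm{ab}}$ has no element of order $2$ must be used. The plan is to construct an inverse via the canonical projection $G \otimes G \to G^{\mathrm{ab}} \otimes_{\mathbb Z} G^{\mathrm{ab}}$: its restriction to $\triangledown(G)$ lands in the symmetric part of $G^{\mathrm{ab}} \otimes_{\mathbb Z} G^{\mathrm{ab}}$, and there is a classical comparison map from $\Gamma(G^{\mathrm{ab}})$ onto this symmetric part whose kernel is annihilated by $2$. Ruling out $2$-torsion in $G^{\mathrm{ab}}$ makes this comparison map an isomorphism, producing a well-defined homomorphism $\triangledown(G) \to \Gamma(G^{\mathrm{ab}})$ which is visibly inverse to $\phi$ on the generators $g \otimes g \leftrightarrow \gamma(\bar g)$. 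The hard step to get right is the bookkeeping that shows the target of the restriction of $G\otimes G \to G^{\mathrm{ab}} \otimes G^{\mathrm{ab}}$ really does lift to $\Gamma(G^{\mathrm{ab}})$ under the $2$-torsion-free hypothesis; everything else is a routine identification.
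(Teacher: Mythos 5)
The paper does not prove this statement at all: it is imported verbatim as Theorem 1.3(iii) of Blyth--Fumagalli--Morigi, so there is no internal argument to compare yours against. Judged on its own, your outline is essentially the standard proof, and in substance the one in the cited source: the assignment $\gamma(\bar g)\mapsto g\otimes g$ is exactly the map $\Gamma(G/G')\to G\otimes G$ appearing in the Brown--Loday diagram quoted in the paper's introduction, surjectivity onto $\triangledown(G)$ holds by definition of $\triangledown(G)$ as the subgroup generated by the diagonal elements, and injectivity follows by composing with the natural epimorphism $G\otimes G\to G^{\mathrm{ab}}\otimes_{\mathbb Z}G^{\mathrm{ab}}$, since the composite is the classical map $\sigma\colon\Gamma(A)\to A\otimes A$, $\gamma(a)\mapsto a\otimes a$, whose kernel is annihilated by $2$.

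Two points need tightening. First, the phrase about ``absorbing terms that lie in the kernel $\triangledown(G)$'' in the well-definedness check is off: the Whitehead relations for the elements $g\otimes g$ must hold on the nose in $G\otimes G$, not merely modulo $\triangledown(G)$ (they do, by direct expansion with the defining relations and the centrality of the $g\otimes g$, but nothing may be discarded in that computation). Second, knowing that $\ker\sigma$ is killed by $2$ does not by itself make $\sigma$ injective; you also need $\Gamma(G^{\mathrm{ab}})$ to contain no $2$-torsion. This follows from the explicit decomposition of $\Gamma$ on finitely generated abelian groups without $2$-torsion, and it is precisely here that the hypothesis ``$G^{\mathrm{ab}}$ finitely generated'' --- present in Blyth et al.'s Theorem 1.3 but silently dropped in the restatement above --- enters your argument. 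For the finite $p$-groups ($p>3$) treated in this paper the hypothesis is automatic, so your proof is sound in the context where the theorem is applied.
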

Given an abelian group $ A,$ from \cite{Whitehead}, $\Gamma(A)$ is used to denote the abelian group with generators $\gamma(a)$, for $a\in A$, by defining relations \begin{itemize}

\item[$(i).$]$\gamma(a^{-1})=\gamma (a).$
\item[$(ii).$] $\gamma(abc)  \gamma (a) \gamma (b)  \gamma (c) =  \gamma (ab)\gamma (bc) \gamma (ca),$
  \end{itemize}
for all $a, b, c \in A.$ $\Gamma$ is called Whitehead's universal quadratic functor.\newline From \cite{Brown}, we have
\begin{thm}\label{even}%\cite[Proposition 4]{Brown}
 Let $G$ and $H$ be abelian groups. Then
\begin{itemize}
\item[(i)] $\Gamma (G \times H) \cong \Gamma (G)\times \Gamma (H)\times  (G\otimes H)$,
\item[(ii)] \[ \Gamma (\mathbb{Z}_{n})= \left\{ \begin{array}{ll}
\mathbb{Z}_{n} & \textrm{$n$~is~odd }\\
\mathbb{Z}_{2n} & \textrm{$n$~is~even }\\
\end{array} \right. \]
\end{itemize}
where $\mathbb{Z}_{n} = \langle x\vert x^{n}=e\rangle$ for $n\geq 0$. %(so that $\mathbb{Z}_{0}\cong \mathbb{Z}$ is the infinite cyclic group)
\end{thm}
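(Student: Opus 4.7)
The approach is to invoke the universal property defining $\Gamma(A)$: namely, $\gamma: A \to \Gamma(A)$ is universal among \emph{even quadratic} maps out of $A$, that is, functions $f: A \to B$ (with $B$ abelian) satisfying $f(-a) = f(a)$ and whose polarization $(a, a') \mapsto f(a + a') - f(a) - f(a')$ is $\mathbb{Z}$-bilinear and symmetric. Granting this reformulation of relations $(i)$ and $(ii)$, both parts of the theorem reduce to constructing or classifying such maps.

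For part $(i)$, the plan is to build mutually inverse homomorphisms. In one direction I would define $f: G \oplus H \to \Gamma(G) \oplus \Gamma(H) \oplus (G \otimes H)$ by $f(g, h) := (\gamma(g), \gamma(h), g \otimes h)$ and verify that $f$ satisfies $(i)$ and $(ii)$; condition $(i)$ is immediate from $\gamma(-g) = \gamma(g)$ and $(-g) \otimes (-h) = g \otimes h$, while condition $(ii)$ in the $\Gamma$-coordinates follows from the analogous relation in $\Gamma(G)$ and $\Gamma(H)$, and in the tensor coordinate it becomes an identity which, after bilinear expansion of both sides, equals $2 \sum_i g_i \otimes h_i + \sum_{i \neq j} g_i \otimes h_j$. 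Universality then extends $f$ to $\bar{f}: \Gamma(G \oplus H) \to \Gamma(G) \oplus \Gamma(H) \oplus (G \otimes H)$. In the other direction, functoriality of $\Gamma$ applied to the inclusions gives homomorphisms $\Gamma(G), \Gamma(H) \to \Gamma(G \oplus H)$; for the tensor summand, the polarization of $\Gamma(G \oplus H)$ restricted to $G \times H$ is bilinear and thus factors through a map $G \otimes H \to \Gamma(G \oplus H)$ sending $g \otimes h$ to $\gamma(g, h) - \gamma(g, 0) - \gamma(0, h)$. Summing these three maps gives the inverse of $\bar f$, which one checks on generators.

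For part $(ii)$, let $x$ generate $\mathbb{Z}_n$ and set $f(k) := \gamma(x^k)$. Relation $(ii)$ in $\Gamma(\mathbb{Z}_n)$ makes $f: \mathbb{Z} \to \Gamma(\mathbb{Z}_n)$ quadratic and relation $(i)$ makes it even; a short induction (using $(ii)$ with $(a, b, c) = (k, 1, -1)$, together with $(i)$ to handle negative $k$) shows that every such function on $\mathbb{Z}$ takes the form $f(k) = k^2 f(1)$. Hence $\Gamma(\mathbb{Z}_n)$ is cyclic, generated by $\gamma(x)$. Since $f$ must descend to $\mathbb{Z}_n$, we need $(k + n)^2 \gamma(x) = k^2 \gamma(x)$ for all $k$, which forces both $n^2 \gamma(x) = 0$ and $2n \gamma(x) = 0$, so the order of $\gamma(x)$ divides $\gcd(n^2, 2n) = n \cdot \gcd(n, 2)$, equal to $n$ when $n$ is odd and $2n$ when $n$ is even. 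To show this bound is sharp, I would verify directly that the map $\mathbb{Z}_n \to \mathbb{Z}_{n'}$, $k \mapsto k^2$, with $n' := n$ (resp.\ $2n$) in the odd (resp.\ even) case, is well defined (because $(k + n)^2 - k^2 = 2kn + n^2$ vanishes modulo $n'$) and defines an even quadratic map; universality then gives a surjection $\Gamma(\mathbb{Z}_n) \to \mathbb{Z}_{n'}$ carrying $\gamma(x)$ to a generator, pinning down the order.

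The main obstacle is the six-variable symmetric bookkeeping for the tensor coordinate in part $(i)$, which is routine but must be executed carefully to avoid sign errors, and the parallel verification that $k \mapsto k^2$ is quadratic on $\mathbb{Z}_n$ modulo $n'$ in part $(ii)$. Neither step involves a conceptual difficulty beyond the universal property of $\Gamma$ and the bilinearity of the tensor product of abelian groups.
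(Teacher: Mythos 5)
Your proposal is correct and complete in outline. Note, though, that the paper offers no proof of this statement at all: it is quoted verbatim from Brown--Johnson--Robertson \cite{Brown} (ultimately Whitehead \cite{Whitehead}), so there is nothing to compare against except the classical argument, which is essentially what you reproduce. The one point you explicitly ``grant'' --- that Whitehead's relations $(i)$ and $(ii)$ make $\gamma$ the universal \emph{even quadratic} map --- is the real content of the reformulation and should be written out: relation $(ii)$ is exactly the statement that $\beta(a,b)=\gamma(ab)\gamma(a)^{-1}\gamma(b)^{-1}$ satisfies $\beta(ab,c)=\beta(a,c)\beta(b,c)$, and together with symmetry of $\beta$ and relation $(i)$ this is the even quadratic condition; it is a two-line computation but it is the hinge on which both halves of your argument turn. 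Everything else checks out: the map $(g,h)\mapsto(\gamma(g),\gamma(h),g\otimes h)$ and the three-part inverse built from functoriality plus the polarization $g\otimes h\mapsto\gamma(g,h)-\gamma(g,0)-\gamma(0,h)$ are mutually inverse on generators, and in part $(ii)$ the recurrence from $(a,b,c)=(k,1,-1)$ does give $\gamma(x^k)=k^2\gamma(x)$, after which the order computation $\gcd(n^2,2n)=n\gcd(n,2)$ and the explicit quadratic map $k\mapsto k^2 \bmod n'$ pin down $\Gamma(\mathbb{Z}_n)$ exactly as you say.
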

 First we give the structure of $\triangledown(G)$ when $G$ is an abelian  group of order dividing $p^5.$
\begin{lem}\label{abnabla}
 Let $G$ be an abelian  group of order dividing $p^5$, where $p>3$ is an odd prime. Then
  \[\triangledown(G)\cong\left \{ \begin{array}{ll}
     \mathbb{Z}_{p^5} & \textrm{if $G \cong$ }~\textrm{$G_{1}.$}\\
      \mathbb{Z}_{p^3}\oplus\mathbb{Z}_{p^2}^{(2)} & \textrm{if $G \cong$ }~\textrm{$G_{13}.$}\\ \mathbb{Z}_{p^4}\oplus\mathbb{Z}_{p}^{(2)} & \textrm{if $G \cong$ }~\textrm{$G_{26}.$}\\
    \mathbb{Z}_{p^2}^{(4)}\oplus \mathbb{Z}_{p}^{(5)} &\textrm{if $G \cong$ }~\textrm{$G_{43}.$}\\
   \mathbb{Z}_{p^3}\oplus \mathbb{Z}_{p}^{(5)}  & \textrm{if $G \cong$ }~\textrm{$G_{51}.$}\\
    \mathbb{Z}_{p^2}\oplus \mathbb{Z}_{p}^{(9)}  & \textrm{if $G \cong$ }~\textrm{$G_{66}.$}\\
    \mathbb{Z}_{p}^{(15)}  & \textrm{if $G \cong$ }~\textrm{$G_{70}.$}\\
\end{array} \right.  \]
\end{lem}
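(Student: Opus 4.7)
The strategy is to apply the reduction of Theorem~\ref{2.1} (Blyth) and then carry out an explicit computation with the Whitehead quadratic functor $\Gamma$. Since $p>3$ is odd, the abelianization $G^{ab}$ of every group in the statement is a $p$-group of odd exponent and hence has no element of order $2$. Theorem~\ref{2.1} then yields $\triangledown(G)\cong \Gamma(G^{ab})$. Because every $G$ considered in this lemma is itself abelian, $G^{ab}=G$, and the problem reduces to computing $\Gamma(G)$ for the seven groups $G_1$, $G_{13}$, $G_{26}$, $G_{43}$, $G_{51}$, $G_{66}$, $G_{70}$.

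First I would read off the invariant factor decomposition of each $G_i$ from its polycyclic presentation in Theorem~\ref{main}, taking into account the convention that unstated relations impose $g_i^p=1$ and $[g_i,g_j]=1$. For instance, $G_1$ is cyclic of order $p^5$, $G_{13}\cong C_{p^3}\times C_{p^2}$ and $G_{26}\cong C_{p^4}\times C_p$; in $G_{43}$ the explicit relations $g_2^p=g_4$ and $g_3^p=g_5$, combined with the default $g_4^p=g_5^p=1$, promote $g_2$ and $g_3$ to order $p^2$, giving $G_{43}\cong C_p\times C_{p^2}\times C_{p^2}$; analogous bookkeeping produces $G_{51}\cong C_{p^3}\times C_p^{(2)}$, $G_{66}\cong C_{p^2}\times C_p^{(3)}$, and $G_{70}\cong C_p^{(5)}$.

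Next, I would apply Theorem~\ref{even} iteratively. For $p$ odd one has $\Gamma(\mathbb{Z}_{p^k})\cong \mathbb{Z}_{p^k}$, and for cyclic $p$-groups $\mathbb{Z}_{p^a}\otimes \mathbb{Z}_{p^b}\cong \mathbb{Z}_{p^{\min(a,b)}}$. Iterating the product formula $\Gamma(A\times B)\cong \Gamma(A)\times \Gamma(B)\times (A\otimes B)$ gives the identity
\[
\Gamma(A_1\times \cdots\times A_n)\;\cong\;\prod_{i=1}^{n}\Gamma(A_i)\;\times\;\prod_{1\le i<j\le n}(A_i\otimes A_j),
\]
into which I would substitute each of the seven invariant factor decompositions above to obtain the claimed formulas for $\triangledown(G)$.

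The computations are essentially mechanical once the isomorphism types have been identified; the only step that demands any care is decoding the polycyclic presentations of $G_{43}$, $G_{51}$ and $G_{66}$, where one must combine an explicit $p$-th power relation with the default $g_i^p=1$ to determine the correct order of each free generator. Beyond this bookkeeping, the lemma follows immediately from Theorems~\ref{2.1} and \ref{even}.
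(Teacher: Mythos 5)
Your proposal is correct and follows exactly the route the paper takes: its entire proof of this lemma is ``Using Theorem~\ref{even}, it is trivial,'' and you have simply spelled out the reduction via Theorem~\ref{2.1}, the invariant-factor bookkeeping, and the iterated product formula for $\Gamma$. One caveat: when you actually substitute $G_{43}\cong \mathbb{Z}_{p^2}^{(2)}\times\mathbb{Z}_p$ into your formula you get $\Gamma(G_{43})\cong \mathbb{Z}_{p^2}^{(3)}\oplus\mathbb{Z}_p^{(3)}$, not the $\mathbb{Z}_{p^2}^{(4)}\oplus\mathbb{Z}_p^{(5)}$ displayed in the lemma; the displayed value equals $J_2(G_{43})=\Gamma(G_{43})\times\mathcal{M}(G_{43})$ and appears to be a misprint, since the paper's own Table~1 records $\triangledown(G_{43})\cong\mathbb{Z}_{p^2}^{(3)}\oplus\mathbb{Z}_p^{(3)}$ --- so do not claim your computation ``obtains the claimed formulas'' in that case without flagging the discrepancy.
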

 \begin{proof}
Using Theorem \ref{even}, it is trivial.
\end{proof}
%%%%%%%%%%%%%%%%%%%%%%%%%%%%
 Now we give the structure of $\triangledown(G)$ when $G$ is a non-abelian  group of order dividing $p^5.$
\begin{lem}\label{nabla}
 Let $G$ be a non-abelian  group of order $p^5$, where $p>3$ is an odd prime. Then
  \[\triangledown(G)\cong\left \{ \begin{array}{ll}
     \mathbb{Z}_{p}^{(3)} & \textrm{if $G \cong$ }~\textrm{$G_{3},\dots, G_{12_{k}}, $}~\textrm{$G_{28},\dots, G_{33_{b}} .$}\\
      \mathbb{Z}_{p}^{(6)} & \textrm{if $G \cong$ }~\textrm{$G_{34},$}~\textrm{$G_{36},$}~\textrm{$G_{38,}$}~\textrm{$G_{39},$}~\textrm{$G_{41},$}~\textrm{$G_{42},$}~\textrm{$G_{44},$}~\textrm{$G_{46},$}\\ &~\textrm{$G_{48_{k}}, \dots, G_{50_{k}}, $}~\textrm{$G_{54}, \dots, G_{63}. $}
      \\ \mathbb{Z}_{p}^{(10)} & \textrm{if $G \cong$ }~\textrm{$G_{64},$}~\textrm{$G_{65},$}~\textrm{$G_{67}, \dots, G_{69}. $}\\
       \mathbb{Z}_{p^2}^{(3)}  & \textrm{if $G \cong$ }~\textrm{$G_{2},$}~\textrm{$G_{14}.$}\\
   \mathbb{Z}_{p^2}\oplus\mathbb{Z}_p^{(5)} &\textrm{if $G \cong$ }~\textrm{$G_{35},$}~\textrm{$G_{37},$}~\textrm{$G_{40},$}~\textrm{$G_{45},$}~\textrm{$G_{47},$}~\textrm{$G_{52},$}~\textrm{$G_{53}.$}\\
   \mathbb{Z}_{p^3}\oplus \mathbb{Z}_{p}^{(2)}  & \textrm{if $G \cong$ }~\textrm{$G_{15},$}~\textrm{$G_{25},$}~\textrm{$G_{27}.$}\\
\end{array} \right.  \]
\end{lem}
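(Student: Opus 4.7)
The plan is to reduce everything to a computation of $\Gamma$ on the abelianization, and then group the cases by isomorphism type of $G^{ab}$. Since $p>3$ is odd, every group of order dividing $p^5$ has odd order, so in particular $G^{ab}$ has no element of order $2$. Hence Theorem~\ref{2.1} applies and gives $\triangledown(G)\cong\Gamma(G^{ab})$ for every non-abelian $G$ in the list, reducing the lemma to a computation of $\Gamma$ on a finitely generated abelian $p$-group.

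Next, I would read off $G^{ab}=G/G'$ for each of the fifty-some non-abelian groups straight from the polycyclic presentations of Theorem~\ref{main}. The bookkeeping rule is simple: each commutator relation $[g_j,g_k]=w$ kills $w$ in the abelianization, while each relation $g_i^p=w$ propagates order. For instance, in $G_{14}$ one has $g_3=g_1^p$ and $g_3^p=g_5$ with $g_5=1$ in $G^{ab}$, forcing $g_1$ to have order $p^2$ and similarly $g_2$, so $G_{14}^{ab}\cong\mathbb{Z}_{p^2}^{(2)}$; in $G_{27}$ the chain $g_3=g_1^p$, $g_4=g_3^p$, $g_5=g_4^p$ together with $g_5=1$ gives $g_1$ of order $p^3$, so $G_{27}^{ab}\cong\mathbb{Z}_{p^3}\oplus\mathbb{Z}_p$. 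Carrying this out uniformly yields exactly six isomorphism classes of abelianization across the whole list.

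For the $\Gamma$-calculation, iterating Theorem~\ref{even}(i) and using that $\Gamma(\mathbb{Z}_{p^a})\cong\mathbb{Z}_{p^a}$ for odd $p$, one obtains for any abelian $p$-group $A\cong\bigoplus_{i=1}^{n}\mathbb{Z}_{p^{a_i}}$ the closed form
\begin{equation*}
\Gamma(A)\;\cong\;\bigoplus_{i=1}^{n}\mathbb{Z}_{p^{a_i}}\;\oplus\;\bigoplus_{1\le i<j\le n}\mathbb{Z}_{p^{\min(a_i,a_j)}}.
\end{equation*}
Applied to the six possibilities $\mathbb{Z}_p^{(2)},\mathbb{Z}_p^{(3)},\mathbb{Z}_p^{(4)},\mathbb{Z}_{p^2}^{(2)},\mathbb{Z}_{p^2}\oplus\mathbb{Z}_p^{(2)},\mathbb{Z}_{p^3}\oplus\mathbb{Z}_p$, this returns respectively $\mathbb{Z}_p^{(3)},\mathbb{Z}_p^{(6)},\mathbb{Z}_p^{(10)},\mathbb{Z}_{p^2}^{(3)},\mathbb{Z}_{p^2}\oplus\mathbb{Z}_p^{(5)},\mathbb{Z}_{p^3}\oplus\mathbb{Z}_p^{(2)}$, matching the six rows of the claimed table.

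The main obstacle is not conceptual but organizational: correctly determining $G^{ab}$ across every group in Theorem~\ref{main}, being careful with exponent-raising chains of $g_i^p$-relations (as in $G_{15},G_{25},G_{27}$) and ignoring commutator relations whose right-hand side involves a primitive root $w$ or a parameter $k$, since these coefficients are units modulo $p$ and do not affect which generators become trivial in $G^{ab}$. After that, the partition of groups by $G^{ab}$ coincides exactly with the partition displayed in the statement, and the final tabulation is immediate.
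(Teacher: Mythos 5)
Your proposal is correct and follows essentially the same route as the paper: invoke Theorem~\ref{2.1} (no $2$-torsion since $p>3$) to get $\triangledown(G)\cong\Gamma(G^{ab})$, read off $G^{ab}$ from the polycyclic presentations, and expand $\Gamma$ via Theorem~\ref{even}; the paper works out only the $G^{ab}\cong\mathbb{Z}_p^{(2)}$ case explicitly and declares the rest similar, whereas you supply the general closed form $\Gamma(\bigoplus_i\mathbb{Z}_{p^{a_i}})\cong\bigoplus_i\mathbb{Z}_{p^{a_i}}\oplus\bigoplus_{i<j}\mathbb{Z}_{p^{\min(a_i,a_j)}}$, which is a harmless strengthening of the same argument.
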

\begin{proof}
%%Using Theorem \ref{2.1}, it turns out that for any group $G$ such that $G^{ab}$ has no elements of order $2$, the structure of $\triangledown(G)$ essentially depends on $G^{ab}$.
Let $G$ be isomorphic to one of the groups  $G_{3}, \dots, G_{12_{k}}, G_{28}, \dots, G_{33_{b}}$. Since $G^{ab}\cong\mathbb{Z}_{p}^{(2)}$, clearly using Theorems  \ref{2.1} and \ref{even} we have $\triangledown(G)\cong\Gamma(\mathbb{Z}_{p}^{(2)})\cong\Gamma(\mathbb{Z}_{p})\oplus\Gamma(\mathbb{Z}_{p})\oplus(\mathbb{Z}_{p}\otimes \mathbb{Z}_{p})\cong\mathbb{Z}_{p}^{(3)}$. The proof of the rest of groups is similar.
\end{proof}
%%%%%%%%%%%%%%%%%%%%%%%%%%%%
\section{third homotopy of all groups of order dividing $p^5$}
Brown and Loday in \cite{Brown} described the role of $J_{2}(G)$ in algebraic topology, they showed the third homotopy group of suspension of an Eilenberg-MacLain space $k(G,1) $ satisfied the condition $\Pi_{3}(SK(G,1))\cong J_{2}(G)$. This section is devoted to obtain the structure of third homotopy groups for groups of order dividing $p^5$.
Blyth et al. \cite{Blyth} proved the following theorem that helps us to obtain $J_{2}(G)$ of $p$-groups of order $p^5.$
\begin{thm}\cite[Corollary 2.3]{Blyth}\label{3.1}
Let $G$ be a group such that $G^{ab}$ is a finitely generated abelian group with
no elements of order $2$. Then $J_{2}(G)\cong \Gamma (G^{ab})\times \mathcal{M}(G).$
\end{thm}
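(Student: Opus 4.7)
The plan is to extract the result from the exact four-term sequence displayed in the commutative diagram of Section~1, namely
\begin{equation*}
0 \to H_3(G) \to \Gamma(G^{ab}) \to J_2(G) \to \mathcal{M}(G) \to 0,
\end{equation*}
by showing that under the $2$-torsion-free hypothesis on $G^{ab}$ the map out of $H_3(G)$ vanishes and the resulting short exact sequence splits naturally.

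First I would identify the image of $\Gamma(G^{ab}) \to J_2(G)$ with the subgroup $\triangledown(G) = \langle g \otimes g \mid g \in G \rangle \le G \otimes G$; this subgroup lies inside $J_2(G) = \ker \kappa$ because $\kappa(g \otimes g) = [g,g] = 1$. The Blyth--Morse result cited as Theorem~2.1 above says that the absence of $2$-torsion in $G^{ab}$ forces $\triangledown(G) \cong \Gamma(G^{ab})$, so the map $\Gamma(G^{ab}) \to J_2(G)$ is already injective. Exactness then forces the image of $H_3(G)$ to be trivial, leaving the short exact sequence
\begin{equation*}
0 \to \Gamma(G^{ab}) \to J_2(G) \to \mathcal{M}(G) \to 0.
\end{equation*}

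Next I would split this sequence via a natural section. The canonical quotient $G \otimes G \to G \wedge G$ has kernel exactly $\triangledown(G)$, and the arguments underpinning Theorem~2.1 in fact yield a natural splitting of this quotient in the $2$-torsion-free regime, giving $G \otimes G \cong \triangledown(G) \times (G \wedge G)$. Intersecting this decomposition with $J_2(G)$, the preimage under the projection $G \otimes G \to G \wedge G$ of $\mathcal{M}(G) = \ker \kappa'$, and using that $\triangledown(G)$ already sits inside $J_2(G)$, produces the desired direct product decomposition $J_2(G) \cong \Gamma(G^{ab}) \times \mathcal{M}(G)$.

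The principal obstacle is the construction of the natural section of $G \otimes G \to G \wedge G$ in the presence of the $2$-torsion-free hypothesis, since this is the substantive ingredient behind the whole argument. One would have to verify that a map built from an antisymmetrization-style formula such as $g \wedge h \mapsto (g \otimes h)(h \otimes g)^{-1}$, possibly after a $2$-adic correction, extends consistently to a well-defined homomorphism compatible with all defining relations of the non-abelian tensor square; the absence of $2$-torsion in $G^{ab}$ is exactly what allows the symmetric tensors $g \otimes g$ to be cleanly separated from their antisymmetric counterparts. Once this section is in hand, all remaining steps reduce to formal diagram chases in the Brown--Loday sequence.
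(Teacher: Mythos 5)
The paper does not actually prove this statement; it is imported verbatim from \cite[Corollary 2.3]{Blyth}, so there is no internal argument to measure yours against. Your outline reproduces the derivation used in that source, and its skeleton is sound: the central extension $1\to\triangledown(G)\to J_{2}(G)\to\mathcal{M}(G)\to 1$ holds because $G\wedge G=(G\otimes G)/\triangledown(G)$ and $\kappa'$ is induced by $\kappa$, so $J_{2}(G)=\pi^{-1}(\mathcal{M}(G))$ for the projection $\pi:G\otimes G\to G\wedge G$; the $2$-torsion-free hypothesis identifies $\triangledown(G)$ with $\Gamma(G^{ab})$; and a direct decomposition $G\otimes G\cong\triangledown(G)\times(G\wedge G)$ restricts to $J_{2}(G)\cong\triangledown(G)\times\mathcal{M}(G)$. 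Note also that the ``principal obstacle'' you isolate is precisely \cite[Theorem 1]{Blyth}, which this paper quotes as Theorem~\ref{tensorsquare}; invoking it closes your argument in one line, and there is no need to reconstruct it.

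The one genuine flaw is the section you propose to build by hand. The assignment $g\wedge h\mapsto(g\otimes h)(h\otimes g)^{-1}$ cannot be corrected into a section of $\pi$: since $h\wedge g=(g\wedge h)^{-1}$ in $G\wedge G$, composing your map with $\pi$ sends $g\wedge h$ to $(g\wedge h)(h\wedge g)^{-1}=(g\wedge h)^{2}$, so at best you have constructed \emph{twice} a section, and $G\wedge G$ need not be $2$-divisible; moreover compatibility of this formula with the defining relations of $G\otimes G$ is itself problematic. The argument in Blyth et al.\ runs in the opposite direction: one uses the symmetrization $g\otimes h\mapsto(g\otimes h)(h\otimes g)$, whose image lies in the central subgroup $\triangledown(G)$ and which restricts to multiplication by $2$ on $\triangledown(G)\cong\Gamma(G^{ab})$, and then exploits the structure of finitely generated abelian groups without $2$-torsion to extract a retraction of $G\otimes G$ onto $\triangledown(G)$. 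So: correct architecture and correct use of the Brown--Loday sequence, but the single step you propose to carry out explicitly is the one that fails as written.
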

 First we give the structure of $J_{2}(G)$ when $G$ is an abelian  group of order dividing $p^5$.

\begin{lem}\label{100}
 Let $G$ be an abelian  group of order dividing $p^5$, where $p$  $(p>3)$ is an odd prime. Then
  \[J_{2}(G)\cong\left \{ \begin{array}{ll}
     \mathbb{Z}_{p^5} & \textrm{if $G \cong$ }~\textrm{$G_{1}.$}\\
      \mathbb{Z}_{p^3}\oplus\mathbb{Z}_{p^2}^{(3)} & \textrm{if $G \cong$ }~\textrm{$G_{13}.$}\\ \mathbb{Z}_{p^4}\oplus\mathbb{Z}_{p}^{(3)} & \textrm{if $G \cong$ }~\textrm{$G_{26}.$}\\
    \mathbb{Z}_{p^2}^{(4)}\oplus \mathbb{Z}_{p}^{(5)} &\textrm{if $G \cong$ }~\textrm{$G_{43}.$}\\
   \mathbb{Z}_{p^3}\oplus \mathbb{Z}_{p}^{(8)}  & \textrm{if $G \cong$ }~\textrm{$G_{51}.$}\\
    \mathbb{Z}_{p^2}\oplus \mathbb{Z}_{p}^{(15)}  & \textrm{if $G \cong$ }~\textrm{$G_{66}.$}\\
    \mathbb{Z}_{p}^{(25)}  & \textrm{if $G \cong$ }~\textrm{$G_{70}.$}\\
\end{array} \right.  \]
\end{lem}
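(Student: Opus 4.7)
The plan is to apply Theorem \ref{3.1} directly. Since each group $G$ in the list is abelian, one has $G^{ab}=G$, and because $p>3$ the group $G$ has no elements of order $2$; hence the hypotheses of Theorem \ref{3.1} are met, yielding
\[
J_{2}(G)\cong \Gamma(G)\oplus \mathcal{M}(G).
\]
Thus the computation splits into two independent tasks: determine $\Gamma(G)$ using Theorem \ref{even}, and read off $\mathcal{M}(G)$ from Theorem \ref{Schur}.

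For the first task, I would decompose each abelian group into its invariant factors and repeatedly apply parts (i) and (ii) of Theorem \ref{even}. Since each cyclic factor has odd order $p^{r}$, we have $\Gamma(\mathbb{Z}_{p^{r}})\cong \mathbb{Z}_{p^{r}}$, and the cross term $\mathbb{Z}_{p^{r}}\otimes \mathbb{Z}_{p^{s}}\cong \mathbb{Z}_{p^{\min(r,s)}}$ contributes a single cyclic summand for each unordered pair of factors. For instance, for $G_{13}\cong C_{p^{3}}\times C_{p^{2}}$ one obtains
\[
\Gamma(G_{13})\cong \Gamma(\mathbb{Z}_{p^{3}})\oplus \Gamma(\mathbb{Z}_{p^{2}})\oplus(\mathbb{Z}_{p^{3}}\otimes\mathbb{Z}_{p^{2}})\cong \mathbb{Z}_{p^{3}}\oplus \mathbb{Z}_{p^{2}}^{(2)},
\]
and for $G_{70}\cong C_{p}^{5}$ iterating the splitting gives $\binom{5}{1}$ diagonal $\Gamma$-summands plus $\binom{5}{2}$ cross tensor summands, i.e.\ $\mathbb{Z}_{p}^{(15)}$. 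The other groups $G_{1},G_{26},G_{43},G_{51},G_{66}$ are handled analogously.

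For the second task I would simply consult Theorem \ref{Schur} for each entry: $\mathcal{M}(G_{1})=1$, $\mathcal{M}(G_{13})\cong \mathbb{Z}_{p^{2}}$, $\mathcal{M}(G_{26})\cong \mathbb{Z}_{p}$, $\mathcal{M}(G_{43})\cong \mathbb{Z}_{p^{2}}\oplus\mathbb{Z}_{p}^{(2)}$, and so on. Adding the two contributions then produces the tabulated expressions; e.g.\ for $G_{26}$ one gets $\mathbb{Z}_{p^{4}}\oplus\mathbb{Z}_{p}\oplus\mathbb{Z}_{p}\oplus\mathbb{Z}_{p}=\mathbb{Z}_{p^{4}}\oplus\mathbb{Z}_{p}^{(3)}$, and for $G_{43}$ the answer $\mathbb{Z}_{p^{2}}^{(4)}\oplus\mathbb{Z}_{p}^{(5)}$ follows from combining $\Gamma(C_{p^{2}}\times C_{p^{2}}\times C_{p})\cong \mathbb{Z}_{p^{2}}^{(3)}\oplus\mathbb{Z}_{p}^{(3)}$ with $\mathcal{M}(G_{43})\cong \mathbb{Z}_{p^{2}}\oplus\mathbb{Z}_{p}^{(2)}$.

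There is no real obstacle here beyond bookkeeping: the only thing to watch is the correct exponent in each tensor term $\mathbb{Z}_{p^{r}}\otimes\mathbb{Z}_{p^{s}}$ and the correct multiplicity coming from the number of pairs of cyclic factors. The proof body can therefore be reduced to the one-line justification given in Lemma \ref{abnabla}, namely a reference to Theorems \ref{3.1}, \ref{even} and \ref{Schur}.
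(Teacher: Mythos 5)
Your proposal is correct and follows essentially the same route as the paper: apply Theorem \ref{3.1} to split $J_{2}(G)$ as $\Gamma(G^{ab})\times\mathcal{M}(G)$, compute $\Gamma(G)$ via Theorem \ref{even} (which is exactly how the paper's Lemma \ref{abnabla} is obtained), and read $\mathcal{M}(G)$ from Theorem \ref{Schur}. Your direct evaluation $\Gamma(G_{43})\cong\mathbb{Z}_{p^{2}}^{(3)}\oplus\mathbb{Z}_{p}^{(3)}$ is the correct one (it agrees with Table 1 and yields the stated $J_{2}(G_{43})$), so your bookkeeping is sound throughout.
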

\begin{proof}
Using Theorems \ref{3.1}, \ref{Schur} and \ref{nabla}, it is trivial.
\end{proof}
%%%%%%%%%%%%%%%%%%%%%%%%%%%%%%5

\begin{lem}\label{101}
 Let $G$ be a non-abelian  group of order dividing $p^5$, where $p$  $(p>3)$ is an odd prime. Then
  \[J_{2}(G)\cong \left \{ \begin{array}{ll}
     \mathbb{Z}_{p}^{(3)} & \textrm{if $G \cong$ }~\textrm{$G_{7}, \dots, G_{9},$}~\textrm{$G_{11,1},$}~\textrm{$G_{12_{k}}.$}\\
     \mathbb{Z}_{p}^{(4)} & \textrm{if $G \cong$ }~\textrm{$G_{4}, \dots, G_{6}$}~\textrm{$G_{10},$}~\textrm{$G_{11,2},$}~\textrm{$G_{29_{a}},$}~\textrm{$G_{11,1},$}~\textrm{$G_{30},$}~\textrm{${32_{a}},$}~\textrm{or $G_{33_{b}}.$}\\  \mathbb{Z}_{p}^{(7)} & \textrm{if $G \cong$}~\textrm{$G_{42},$}~\textrm{$G_{44},$}~\textrm{$G_{46},$}~\textrm{$G_{48,1},$}~\textrm{$G_{50_{k}}.$}\\
     \mathbb{Z}_{p}^{(9)} & \textrm{if $G \cong$}~\textrm{$G_{36},$}~\textrm{$G_{38},$}~\textrm{$G_{39},$}~\textrm{$G_{55}, \dots, G_{58},$}~\textrm{$G_{60}, \dots, G_{63}.$}\\
     \mathbb{Z}_{p}^{(10)} & \textrm{if $G \cong$}~\textrm{$G_{54},$}~\textrm{$G_{59}.$}\\
     \mathbb{Z}_{p}^{(12)} & \textrm{if $G \cong$}~\textrm{$G_{34}.$}\\
     \mathbb{Z}_{p}^{(15)} & \textrm{if $G \cong$}~\textrm{$G_{65},$}~\textrm{$G_{67}, \dots, G_{69}.$}\\
     \mathbb{Z}_{p}^{(17)} & \textrm{if $G \cong$}~~\textrm{$G_{64}.$}\\
      \mathbb{Z}_{p^2}\oplus\mathbb{Z}_{p}^{(2)} & \textrm{if $G \cong$}~\textrm{$G_{24}.$}\\
       \mathbb{Z}_{p^2}\oplus\mathbb{Z}_{p}^{(3)}& \textrm{if $G \cong$}~\textrm{$G_{19}.$}\\
         \mathbb{Z}_{p^2}\oplus\mathbb{Z}_{p}^{(4)} & \textrm{if $G \cong$}~\textrm{$G_{16}, \dots, G_{18},$}~\textrm{$G_{20}, \dots, G_{22}.$}\\
           \mathbb{Z}_{p^2}\oplus\mathbb{Z}_{p}^{(6)} & \textrm{if $G \cong$}~\textrm{$G_{45},$}~\textrm{$G_{48,2},$}~\textrm{$G_{49}.$}\\
            \mathbb{Z}_{p^2}\oplus\mathbb{Z}_{p}^{(8)} & \textrm{if $G \cong$}~\textrm{$G_{40}.$}\\
             \mathbb{Z}_{p^2}\oplus\mathbb{Z}_{p}^{(9)} & \textrm{if $G \cong$}~\textrm{$G_{35},$}~\textrm{$G_{37}.$}\\
    \mathbb{Z}_{p^2}^{(3)}\oplus\mathbb{Z}_{p}& \textrm{if $G \cong$}~\textrm{$G_{14}.$}\\
     \mathbb{Z}_{p^2}^{(3)}\oplus\mathbb{Z}_{p}^{(3)}& \textrm{if $G \cong$}~\textrm{$G_{2}.$}\\
    \mathbb{Z}_{p^3}\oplus\mathbb{Z}_{p}^{(3)} & \textrm{if $G \cong$}~\textrm{$G_{25},$}~\textrm{$G_{27}.$}\\
   \end{array} \right.  \]
\end{lem}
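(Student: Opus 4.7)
The plan is to invoke Theorem \ref{3.1} directly. Since $p>3$ is odd, every $p$-group of order dividing $p^5$ has abelianization with no $2$-torsion, so the hypothesis of Theorem \ref{3.1} is met, and we obtain
\[
J_{2}(G)\;\cong\;\Gamma(G^{ab})\times \mathcal{M}(G).
\]
Furthermore, by Theorem \ref{2.1}, $\triangledown(G)\cong \Gamma(G^{ab})$, so this rewrites as
\[
J_{2}(G)\;\cong\;\triangledown(G)\oplus \mathcal{M}(G).
\]
This reduces everything to combining two pieces of information we already have in hand.

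The second step is purely bookkeeping: for each non-abelian group $G$ of order dividing $p^5$, I would read off $\triangledown(G)$ from Lemma \ref{nabla} and $\mathcal{M}(G)$ from Theorem \ref{Schur}, and take the direct sum. For example, for $G\in\{G_3,\dots ,G_{12_k},G_{28},\dots ,G_{33_b}\}$ Lemma \ref{nabla} gives $\triangledown(G)\cong \mathbb{Z}_p^{(3)}$; then the sub-list $\{G_7,G_8,G_9,G_{11,1},G_{12_k}\}$ with $\mathcal{M}(G)=1$ yields $J_2(G)\cong \mathbb{Z}_p^{(3)}$, while $\{G_4,G_5,G_6,G_{10},G_{11,2},G_{29_a},G_{30},G_{32_a},G_{33_b}\}$ with $\mathcal{M}(G)\cong \mathbb{Z}_p$ yields $J_2(G)\cong \mathbb{Z}_p^{(4)}$. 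The cases with larger $\triangledown(G)$ (e.g.\ $G_{34}, G_{54},\dots ,G_{63}$ where $\triangledown(G)\cong \mathbb{Z}_p^{(6)}$, and $G_{64},\dots ,G_{69}$ where $\triangledown(G)\cong \mathbb{Z}_p^{(10)}$) combine analogously with their Schur multipliers to yield the stated ranks, and the same applies to the cases involving $\mathbb{Z}_{p^2}$ and $\mathbb{Z}_{p^3}$ summands, such as $G_2$, $G_{14}$, $G_{15}$, $G_{25}$, $G_{27}$, $G_{35}$, $G_{37}$, $G_{40}$, $G_{45}$, and the $G_{16},\dots ,G_{22}$ family.

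The only potential obstacle is a bookkeeping one: one must be careful to cross-reference correctly the two lists, since the groupings in Lemma \ref{nabla} and Theorem \ref{Schur} are different and some sub-cases (e.g.\ $G_{11,1}$ versus $G_{11,2}$, or $G_{48,1}$ versus $G_{48,2}$, or the parameter-dependent $G_{12_k}$, $G_{29_a}$, $G_{32_a}$, $G_{33_b}$, $G_{50_k}$) have different Schur multipliers even though their abelianizations agree. Beyond this case-matching, no computation is required and the lemma follows.
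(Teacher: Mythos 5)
Your proposal is correct and is exactly the paper's argument: the paper's proof of this lemma consists of the single line ``Using Theorems \ref{3.1}, \ref{Schur} and \ref{nabla}, we have the result,'' which is precisely the decomposition $J_{2}(G)\cong\Gamma(G^{ab})\times\mathcal{M}(G)\cong\triangledown(G)\oplus\mathcal{M}(G)$ followed by the case-by-case bookkeeping you describe. Your write-up merely makes explicit the cross-referencing step (including the caveat about sub-cases such as $G_{11,1}$ versus $G_{11,2}$) that the paper leaves implicit.
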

\begin{proof}
 Using Theorems \ref{3.1}, \ref{Schur} and \ref{nabla}, we have the result.
\end{proof}
 %%%%%%%%%%%%%%%%%%%%%%%%%%%%%%%%
\section{non-abelian tensor square and non-abelian exterior square of all groups of order dividing $p^5$}

This section illustrates our main results, our goal is to obtain non-abelian exterior square and non-abelian tensor square of all groups of order dividing $p^5$.
%%%%%%%%%%%%%%%%%%%%%%%%%%%%%
The following theorem is  a key tool to obtain the structure of $G\wedge G$ .
\begin{thm}\cite[Proposition 16 (iv)]{Ellis5}\label{weg}
Let $G$ be a group and $N\unlhd G$. Then $G/N\wedge G/N \cong G\wedge G$ if and only if $N\leq Z^{\wedge}(G).$
\end{thm}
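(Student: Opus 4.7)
The plan is to study the natural surjection $\pi\colon G\wedge G \to G/N\wedge G/N$ induced by the quotient map $q\colon G\to G/N$, and to show that $\pi$ is an isomorphism precisely when $N\leq Z^{\wedge}(G)$.

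First I would verify that $\pi$ is well-defined and surjective. Since conjugation in $G$ projects to conjugation in $G/N$, each defining relation of $G\wedge G$ (the tensor-like crossed relations together with $g\wedge g=1$) passes to the corresponding relation of $G/N\wedge G/N$; surjectivity is immediate because $\pi$ hits every generator $\bar g\wedge \bar h$. Next I would identify $\ker\pi$. Let $K$ denote the normal closure in $G\wedge G$ of the set $\{n\wedge g,\; g\wedge n : n\in N,\ g\in G\}$. One inclusion $K\subseteq \ker\pi$ is trivial, since such elements map to $1\wedge \bar g=1$ and $\bar g\wedge 1=1$. For the reverse inclusion I would exhibit a homomorphism $G/N\wedge G/N \to (G\wedge G)/K$ by sending $\bar g\wedge \bar h$ to the class of $g\wedge h$: this is well-defined because the relations on $G/N\wedge G/N$ lift to the relations of $G\wedge G$ modulo $K$ (any ambiguity in the choice of lifts $g,h$ differs by an element of $N$, hence changes the class of $g\wedge h$ by a product of generators of $K$). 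The composition with $\pi$ in either direction gives the identity, proving $\ker\pi=K$.

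With $\ker\pi = K$ in hand, both directions of the equivalence are short. If $N\leq Z^{\wedge}(G)$, every generator of $K$ is trivial in $G\wedge G$ by definition of $Z^{\wedge}(G)$, so $K = 1$ and $\pi$ is an isomorphism, yielding $G\wedge G \cong G/N\wedge G/N$. Conversely, if $\pi$ is an isomorphism, then $K = 1$, hence $n\wedge g = 1$ in $G\wedge G$ for all $n\in N$, $g\in G$, which is exactly the condition $N\subseteq Z^{\wedge}(G)$.

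The main obstacle is the reverse inclusion $\ker\pi \subseteq K$ — that is, verifying $(G\wedge G)/K$ admits the presentation of $G/N\wedge G/N$ and that no extra relations are introduced. A minor side point is the interpretation of the hypothesis $G/N\wedge G/N \cong G\wedge G$: if only abstract isomorphism is asserted, one should add a remark (using cardinality in the finite case, or the universal property/naturality of $\wedge$ in general) that reduces the statement to the canonical map $\pi$ being an isomorphism.
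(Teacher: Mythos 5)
The paper does not prove this statement at all---it is quoted directly from Ellis \cite[Proposition 16 (iv)]{Ellis5}---so there is no internal proof to compare against; what follows is an assessment of your argument on its own terms. Your proof is correct and is essentially the standard argument behind Ellis's result: the canonical surjection $\pi\colon G\wedge G\to G/N\wedge G/N$ has kernel exactly the subgroup $K$ generated by the elements $n\wedge g$, and you establish this the right way, by constructing the inverse map $\bar g\wedge\bar h\mapsto (g\wedge h)K$ and checking independence of lifts via the crossed relations (for $n\in N$ one has $gn\wedge h=({}^{g}n\wedge{}^{g}h)(g\wedge h)$ with ${}^{g}n\in N$, and similarly in the second variable). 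Two small points deserve explicit mention. First, the generators $g\wedge n$ of $K$ are redundant: the definition of $Z^{\wedge}(G)$ only gives $n\wedge h=1$, so to conclude $K=1$ from $N\leq Z^{\wedge}(G)$ you need the standard identity $(x\wedge y)(y\wedge x)=1$ in $G\wedge G$, which yields $g\wedge n=(n\wedge g)^{-1}$; this should be cited or proved. Second, the caveat you raise at the end is the only genuine issue: the ``only if'' direction as literally stated concerns an abstract isomorphism, whereas your argument shows $\pi$ is an isomorphism iff $K=1$. For the finite $p$-groups to which the present paper applies the theorem, surjectivity of $\pi$ together with equality of the two finite orders forces $\pi$ to be injective, so the gap closes; for infinite $G$ the literal reading would need a Hopficity-type argument, and the correct reading of Ellis's proposition is in terms of the natural homomorphism, exactly as you suggest.
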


%%%%%%%%%%%%%%%%%%%%%%%%%%%%%%%%%
\begin{thm}\label{Wegabelian}
 Let $G$ be an abelian groups of order dividing $p^5$. Then
  \[G\wedge G\cong \left \{ \begin{array}{ll}
  $1$ & \textrm{if $G \cong$ }~\textrm{$G_{1}.$}\\ \mathbb{Z}_{p^2} & \textrm{if $G \cong$ }~\textrm{$G_{13}.$}\\
 \mathbb{Z}_{p} & \textrm{if $G \cong$ }~\textrm{$G_{26}.$}\\
 \mathbb{Z}_{p^2} \oplus \mathbb{Z}_{p} ^{(2)} & \textrm{if $G \cong$ }~\textrm{$G_{43}.$}\\
  \mathbb{Z}_{p} ^{(3)} & \textrm{if $G \cong$ }~\textrm{$G_{51}.$}\\ \mathbb{Z}_{p} ^{(6)} & \textrm{if $G \cong$ }~\textrm{$G_{66}.$}\\
   \mathbb{Z}_{p} ^{(10)} & \textrm{if $G \cong$ }~\textrm{$G_{70}.$}\\
\end{array} \right.  \]
\end{thm}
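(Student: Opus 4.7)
The plan is to exploit the observation that for an abelian group $G$ the commutator map $\kappa': G\wedge G\to G'$ has trivial image, so the central extension from the diagram at the beginning of the paper, namely $1\to \mathcal{M}(G)\to G\wedge G\to G'\to 1$, collapses to an isomorphism $G\wedge G\cong \mathcal{M}(G)$. Thus the whole theorem reduces to reading off the Schur multiplier of each of the seven abelian groups in the list.

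First I would list the abelian isomorphism types, which can be extracted directly from the presentations in Theorem \ref{main}: $G_1\cong C_{p^5}$, $G_{13}\cong C_{p^3}\times C_{p^2}$, $G_{26}\cong C_{p^4}\times C_p$, $G_{43}\cong C_{p^2}\times C_{p^2}\times C_p$, $G_{51}\cong C_{p^3}\times C_p\times C_p$, $G_{66}\cong C_{p^2}\times C_p^{(3)}$ and $G_{70}\cong C_p^{(5)}$. Then for each of these I would invoke the classical formula $\mathcal{M}(C_{p^{a_1}}\times\cdots\times C_{p^{a_n}})\cong \bigoplus_{i<j} C_{p^{\min(a_i,a_j)}}$; for the cases appearing in Theorem \ref{Schur} the values are already tabulated and match the claim, so nothing further is needed there.

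The one bookkeeping point that requires attention is $G_{26}$, which does not explicitly appear in Theorem \ref{Schur}; here I would compute $\mathcal{M}(C_{p^4}\times C_p)\cong C_{\min(4,1)}=\mathbb{Z}_p$ directly by the formula, agreeing with the statement $G_{26}\wedge G_{26}\cong \mathbb{Z}_p$. For all the remaining six groups a case-by-case check against Theorem \ref{Schur} immediately yields the listed structures: $\mathcal{M}(G_1)=1$, $\mathcal{M}(G_{13})=\mathbb{Z}_{p^2}$, $\mathcal{M}(G_{43})=\mathbb{Z}_{p^2}\oplus\mathbb{Z}_p^{(2)}$, $\mathcal{M}(G_{51})=\mathbb{Z}_p^{(3)}$, $\mathcal{M}(G_{66})=\mathbb{Z}_p^{(6)}$, $\mathcal{M}(G_{70})=\mathbb{Z}_p^{(10)}$.

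There is essentially no obstacle here; the entire content is the identification $G\wedge G\cong\mathcal{M}(G)$ in the abelian case, followed by table lookups. If one preferred an approach that avoided even this small appeal to the Schur multiplier table, one could alternatively use Theorem \ref{weg} together with Theorem \ref{1.6} to replace each $G$ by $G/Z^{\wedge}(G)$ and reduce to smaller abelian $p$-groups of exponent $p$, but this route is strictly longer than the direct identification with $\mathcal{M}(G)$ and offers no real advantage.
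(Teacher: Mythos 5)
Your proposal is correct and follows essentially the same route as the paper: since $G'=1$ for abelian $G$, the exact sequence gives $G\wedge G\cong\mathcal{M}(G)$, and the values are then read off from Theorem \ref{Schur} (supplemented, as you rightly note, by the classical formula for $\mathcal{M}$ of an abelian $p$-group in the one case, $G_{26}$, that Theorem \ref{Schur} omits). No gaps.
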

\begin{proof}
Since the derived subgroup of an abelian group $G$ is trivial, we have $G\wedge G\cong \mathcal{M}(G).$ Now the result follows by Theorem \ref{Schur}.
\end{proof}
Our main theorem in this context is the following.
%%%%%%%%%%%%%%%%%%%%%%%%%%%%%%%%
\begin{thm}\label{Weg}
 Let $G$ be a  non-abelian group of order dividing $p^5$, where $p (p>3)$ is an odd prime. Then
  \[G \wedge G\cong \left \{ \begin{array}{ll}
   \mathbb{Z}_{p} & \textrm{if $G \cong$ }~\textrm{$G_{27}.$}\\    \mathbb{Z}_{p}^{(3)} & \textrm{if $G \cong$ }~\textrm{$G_{7}, \dots, G_{9},$}~\textrm{$G_{11,1},$}~\textrm{$G_{12_{k}},$}~\textrm{$G_{15},$}~\textrm{$G_{19},$}~\textrm{$G_{21}, \dots, G_{23},$}~\textrm{$G_{42},$}\\ &~\textrm{$G_{44},$}~\textrm{$G_{46},$}~\textrm{$G_{47},$}~\textrm{$G_{481},$}~\textrm{$G_{50_{k}},$}~\textrm{$G_{52},$}~\textrm{$G_{53}.$}\\
 \mathbb{Z}_{p}^{(4)} & \textrm{if $G \cong$ }~\textrm{$G_{4}, \dots, G_{6},$}~\textrm{$G_{16},$}~\textrm{$G_{20},$}~\textrm{$G_{29_{a}},$}~\textrm{$G_{30},$}~\textrm{$G_{32_{a}},$}~\textrm{$G_{33_{b}}.$}\\
  \mathbb{Z}_{p}^{(5)} & \textrm{if $G \cong$ }~\textrm{$G_{35}, \dots, G_{39},$}~\textrm{$G_{55}, \dots, G_{58},$}~\textrm{$G_{60}, \dots, G_{63}.$}\\
 \mathbb{Z}_{p}^{(6)} & \textrm{if $G \cong$ }~\textrm{$G_{3},$}~\textrm{$G_{28},$}~\textrm{$G_{31},$}~\textrm{$G_{54},$}~\textrm{$G_{59},$}~\textrm{$G_{65},$}~\textrm{$G_{67}, \dots, G_{69}.$}\\
  E_{1}\times\mathbb{Z}_{p}^{(3)} & \textrm{if $G \cong$ }~\textrm{$G_{28},$}~\textrm{$G_{31}.$}\\
 \mathbb{Z}_{p}^{(8)} & \textrm{if $G \cong$ }~\textrm{$G_{34},$}~\textrm{$G_{64}.$}\\
  \mathbb{Z}_{p^2} & \textrm{if $G \cong$ }~\textrm{$G_{14},$}~\textrm{$G_{24},$}~\textrm{$G_{25}.$}\\
   \mathbb{Z}_{p^2}\oplus \mathbb{Z}_{p}^{(2)} & \textrm{if $G \cong$ }~\textrm{$G_{2},$}~\textrm{$G_{10},$}~\textrm{$G_{11,2},$}~\textrm{$G_{17},$}~\textrm{$G_{18},$}~\textrm{$G_{40},$}~\textrm{$G_{41},$}~\textrm{$G_{45},$}~\textrm{$G_{48,2},$}~\textrm{$G_{49}.$}\\
  \end{array} \right.  \]
\end{thm}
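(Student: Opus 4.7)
The plan is to exploit the central extension
\[
1 \longrightarrow \mathcal{M}(G) \longrightarrow G\wedge G \longrightarrow G' \longrightarrow 1
\]
read off from the commutative diagram in the introduction, together with the reduction principle of Theorem \ref{weg}: any normal subgroup $N\leq Z^{\wedge}(G)$ can be factored out without changing $G\wedge G$. Since $Z^{\wedge}(G)\cong Z^{*}(G)$ has already been tabulated in Theorem \ref{1.6} and $\mathcal{M}(G)$ in Theorem \ref{Schur}, the order $|G\wedge G|=|\mathcal{M}(G)|\cdot|G'|$ is immediate in every case; what remains is to pin down the precise abelian (or nilpotent) isomorphism type.

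My first step, for each non-abelian $G$ in the classification, is to identify a suitable central subgroup $N\leq Z^{\wedge}(G)$ — typically the subgroup generated by the last one or two generators $g_{4},g_{5}$ that appear only in the defining relations as commutators or $p$-th powers — and pass to the quotient $\bar G = G/N$, whose order divides $p^{4}$ or $p^{3}$. By Theorem \ref{weg}, $G\wedge G\cong \bar G\wedge \bar G$, and the exterior squares of groups of order $p^{3}$ and $p^{4}$ are available from \cite{Ghorbanzadeh}. In most families this immediately yields the claimed elementary abelian type $\mathbb{Z}_{p}^{(k)}$: one confirms that the computed exponent of $\bar G\wedge \bar G$ is $p$ and that the order matches $|\mathcal{M}(G)|\cdot|G'|$. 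The groups $G_{27}$, $G_{15}$, $G_{19}$, $G_{21}$--$G_{23}$ and their relatives, where $Z^{*}(G)$ is cyclic of order $p^{2}$ or $p^{3}$, are handled most efficiently this way because the quotient becomes abelian and Theorem \ref{Wegabelian} applies.

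The second step is the handful of cases where $Z^{\wedge}(G)=1$, i.e.\ $G$ is capable (for instance $G_{2}$, $G_{3}$, $G_{10}$, $G_{11,2}$, $G_{17}$, $G_{18}$, $G_{28}$, $G_{31}$, $G_{34}$, $G_{40}$, $G_{41}$, $G_{43}$, $G_{45}$, $G_{48,2}$, $G_{49}$, $G_{54}$, $G_{59}$, $G_{64}$, $G_{70}$). Here the reduction of Theorem \ref{weg} is useless, so I would instead attack $G\wedge G$ directly from the defining relations of the exterior square, using the identities $g\wedge g=1$, bilinearity, and the commutator formula $[g,h]\mapsto g\wedge h$. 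The explicit generators $\{g_{i}\wedge g_{j}\mid i<j\}$ together with their $p$-th powers span $G\wedge G$, and the relations coming from the polycyclic presentation of $G$ translate into a finite list of linear relations among these symbols. Computing the abelianised presentation of $G\wedge G$ — which is legitimate since $G\wedge G$ is nilpotent of class at most $2$ with $[G\wedge G,G\wedge G]$ mapping into $[G',G]\wedge 1$ — yields the invariant factors; for the groups listed in our theorem, this class turns out to be abelian and the computation reduces to elementary Smith-normal-form bookkeeping. For the twelve exceptional groups singled out in the introduction ($G_{2},G_{10},G_{11,2},G_{17},G_{18},G_{28},G_{31},G_{40},G_{41},G_{45},G_{48,2},G_{49}$), the authors already indicate that they quote \cite{Hatui}.

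The genuine obstacle, and where I expect the most delicate work, is distinguishing $\mathbb{Z}_{p^{2}}\oplus\mathbb{Z}_{p}^{(k)}$ from $\mathbb{Z}_{p}^{(k+2)}$ — that is, exhibiting an element of order exactly $p^{2}$ in $G\wedge G$. For $G_{14},G_{24},G_{25}$ (claimed $\mathbb{Z}_{p^{2}}$) and for groups like $G_{2},G_{40},G_{45}$ (claimed $\mathbb{Z}_{p^{2}}\oplus\mathbb{Z}_{p}^{(2)}$), I would locate a generator of $\mathcal{M}(G)$ inside $G\wedge G$ whose image already has order $p^{2}$, or alternatively find a generator $g_{i}\wedge g_{j}$ whose $p$-th power is a nontrivial commutator $g_{i}\wedge g_{j}^{p}=g_{i}\wedge g_{k}$ that is itself nonzero modulo $p$. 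Combined with the known order $|\mathcal{M}(G)|\cdot|G'|$ from Theorems \ref{Schur} and the exponent bound obtained from the relations, this pins down the invariant factor decomposition and completes the theorem.
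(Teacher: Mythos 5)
Your proposal is correct in outline and, on the bulk of the cases, coincides with the paper's own strategy: both arguments rest on the order formula $|G\wedge G|=|\mathcal{M}(G)|\,|G'|$ coming from the exact sequence, on the reduction $G\wedge G\cong (G/N)\wedge (G/N)$ for $N\leq Z^{\wedge}(G)$ (Theorem \ref{weg}) combined with Theorem \ref{1.6} and the tables of \cite{Ghorbanzadeh}, and on delegating the twelve capable groups $G_{2},G_{10},G_{11,2},G_{17},G_{18},G_{28},G_{31},G_{40},G_{41},G_{45},G_{48,2},G_{49}$ to \cite{Hatui}. Where you genuinely diverge is in the remaining capable cases $G_{3},G_{34},G_{54},G_{59},G_{64}$: you propose extracting an abelianised presentation of $G\wedge G$ from the polycyclic presentation of $G$ and running Smith normal form, whereas the paper notes that these groups have exponent $p$, invokes Moravec's exponent theorem \cite{Moravec} to conclude that $G\otimes G$ (hence $G\wedge G$) has exponent $p$, and checks that $(G\wedge G)'=\langle [x,y]\wedge [x',y']\rangle$ vanishes, so that $G\wedge G$ is elementary abelian and its order alone determines it; this is much lighter than tracking all relations in the exterior square. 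The paper also exploits a shortcut you leave implicit: when $\mathcal{M}(G)=1$ (e.g.\ $G_{7},\dots,G_{9}$, $G_{11,1}$, $G_{12_{k}}$, $G_{24}$, $G_{27}$) the sequence collapses to $G\wedge G\cong G'$, which settles $G_{24}$ and $G_{27}$ at once, so the ``find an element of order $p^{2}$'' issue in your last paragraph really only arises for groups already covered by \cite{Hatui} or by the quotient tables. No step of your plan would fail, but it would require noticeably more hand computation than the route the paper takes.
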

\begin{proof}
%The Theorem \ref{weg} is the key tool to find the structure of $G\wedge G$ when G is a non-capable group. So we summarize the result and we do not go through the details. When G is a capable group, $Z^{*}(G)=1$, so can not use  theorem \ref{weg},
\begin{itemize}
\item[($i.$)] Let $G\cong G_{7}, \dots, G_{9},G_{11,1},G_{12_{k}},G_{24}$ or $G_{27}.$ \newline Since Theorem \ref{Schur} implies $\mathcal{M}(G)=1,$ we have $G\wedge G\cong G'$ and the result follows.
\item[($ii.$)]Let $G\cong G_{4}, G_{5}, G_{6},G_{16},G_{20} , G_{29_{a}}, G_{30}, G_{32_{a}}$ or $G_{33_{b}}.$ \newline Using Theorem \ref{weg} and putting $N=Z^{\wedge}(G)=\mathbb{Z}_{p},$ we have
$G\wedge G\cong G/N\wedge G/N$, now using Table 3. of \cite{Ghorbanzadeh}, we have $G\wedge G\cong\mathbb{Z}_{p}^{(4)}.$
\item[$(iii).$] Let $ G\cong G_{3}$.\newline This group is a capable group and  we know $\vert G\wedge G\vert=\vert \mathcal{M}(G) \vert \vert (G') \vert$, so by using Theorem \ref{Schur}% and \ref{DerivedSubgroup}
 , we have $\vert G\wedge G\vert=p^{6}. $ Using \cite [Theorem 2]{Moravec}, the exponent of $G\otimes G$ is $p$, since the exponent of $G$ is $p$. Therefore the exponent of $G\wedge G$ is $p$.\newline On the other hand $G\wedge G$ is an abelian group.\newline We have $(G\wedge G)'=\langle [x,y] \wedge [x',y'] \vert x, y, x', y' \in G\rangle=\langle [(x \wedge y) , (x' \wedge y')] \vert x, y, x', y' \in G\rangle=1$,\newline since $G'=\langle [g_{i}, g_{j}] \vert i, j \in \lbrace 3, 4, 5 \rbrace \rangle$ it is enough to prove that \newline
 $(g_{3}\wedge g_{4})=1, (g_{3}\wedge g_{5})=1$ and $(g_{4}\wedge g_{5})=1$.\newline Hence $G\wedge G$ is an elementary abelian group and then $ G\wedge G\cong \mathbb{Z}_{p}^{(6)}.$
 \item[$(iv).$] Let $ G\cong G_{34}, G_{54},G_{59}$ or $G_{64}.$
 \newline The proof  for this groups is completely similar to $(iii).$
\item[$(v).$]  Let $ G\cong G_{2}, G_{10}, G_{11,2}, G_{17}, G_{18},  G_{28}, G_{31}, G_{40}, G_{41}, G_{45}, G_{48,2}$ or $ G_{49}.$ \newline
This groups are also capable and we use the result of \cite{Hatui}.
Following notation and terminology of \cite{Hatui}, $G_{2}=\phi_{2}(221)d, G_{10}=\phi_{6}(221)d_{0},$\newline$ G_{11,2}=\phi_{6}(221)b_{1/2}(p-1), G_{17}=\phi_{3}(221)b_{r}, G_{18}=\phi_{3}(221)b_{r}, $\newline$G_{28}=\phi_{9}(1^5), G_{31}=\phi_{10}(1^5), G_{40}=\phi_{2}(221)a, G_{41}=\phi_{4}(221)b, $\newline$G_{45}=\phi_{2}(221)c, G_{48,2}=\phi_{4}(221)d_{1/2}(p-1)
$ and $G_{49}=\phi_{4}(221)f_{0}$.
$\newline$Here $ E_{1}$ denotes the non-abelian group of order $p^3$  and exponent $p$.
\item[$(vi).$] The proof  for the rest of groups is completely similar to $(ii).$
\end{itemize}
\end{proof}
The next theorem states the structure of the tensor square of groups with respect to the direct products of two groups.
\begin{thm}\cite[Proposition 11]{Brown}
 Let $G$ and $H$ be groups. Then $(G\times H)\otimes (G\times H) \cong (G\otimes G)\times (G^{ab}\otimes H^{ab})\times (H^{ab}\otimes G^{ab})\times (H\otimes H)$.
\end{thm}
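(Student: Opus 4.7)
The plan is to realize $(G \times H) \otimes (G \times H)$ as an internal direct product of four subgroups, each isomorphic to one of the claimed factors. Throughout I identify $G$ with the normal subgroup $G \times 1$ and $H$ with $1 \times H$; these subgroups centralize each other and act trivially on one another by conjugation inside $G \times H$. Let $T_{GG}, T_{GH}, T_{HG}, T_{HH}$ denote the subgroups of $(G \times H) \otimes (G \times H)$ generated, respectively, by tensors of the four ``pure'' types $g \otimes g'$, $g \otimes h'$, $h \otimes g'$, $h \otimes h'$ (with the obvious inclusions into $G \times H$).

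First I would expand an arbitrary generator $(g,h) \otimes (g',h')$ by repeatedly applying the defining relations $xy \otimes z = ({}^x y \otimes {}^x z)(x \otimes z)$ and $x \otimes yz = (x \otimes y)({}^y x \otimes {}^y z)$, using at each step that elements of $G$ and $H$ commute and act trivially on each other in $G \times H$. This expresses $(g,h) \otimes (g',h')$ as a product of tensors of the four pure types, so $T_{GG} \cdot T_{GH} \cdot T_{HG} \cdot T_{HH}$ exhausts the tensor square. A parallel direct computation, again hinging on the trivial mutual action, shows that any two tensors of different types commute; hence the four subgroups pairwise commute and are normal.

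The main obstacle is the precise identification of each $T_{\bullet\bullet}$ with the claimed factor, which I would dispatch via retractions. The two projections $G \times H \to G$ and $G \times H \to H$ induce homomorphisms on the tensor squares whose compositions with the inclusions identify $T_{GG} \cong G \otimes G$ and $T_{HH} \cong H \otimes H$. For the cross terms I would invoke the standard collapse of Brown--Loday: when two groups act trivially on one another, their non-abelian tensor product reduces to the ordinary abelian tensor product of their abelianizations; applied to $G$ and $H$ inside $G \times H$ this yields $T_{GH} \cong G^{ab} \otimes_{\mathbb Z} H^{ab}$, and symmetrically $T_{HG} \cong H^{ab} \otimes_{\mathbb Z} G^{ab}$. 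Assembling the obvious homomorphism from the external direct product of the four factors into $(G \times H) \otimes (G \times H)$, surjectivity follows from the first step and injectivity is obtained from the four retractions onto the factors, completing the proof.
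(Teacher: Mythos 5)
The paper does not actually prove this statement: it is imported verbatim, with a citation, as Proposition 11 of Brown--Johnson--Robertson, so there is no internal proof to compare yours against. Your argument is correct and is essentially the standard proof from that cited source: split the tensor square into the four subgroups generated by pure tensors, use the trivial mutual action of $G\times 1$ and $1\times H$ to show these subgroups generate everything and pairwise commute, identify the diagonal pieces via the two projections of $G\times H$, and identify the cross pieces via the collapse $G\otimes H\cong G^{\mathrm{ab}}\otimes_{\mathbb Z}H^{\mathrm{ab}}$ when the actions are trivial. The only step you leave genuinely implicit is the injectivity of the natural surjection $G^{\mathrm{ab}}\otimes_{\mathbb Z}H^{\mathrm{ab}}\twoheadrightarrow T_{GH}$ (a priori the subgroup generated by the cross tensors inside the big tensor square could be a proper quotient of the abstract tensor product); your proposed retractions are the right fix, but to make them precise one must check that $\bigl((g,h),(g',h')\bigr)\mapsto \bar g\otimes\bar h'$ is a well-defined crossed pairing compatible with the defining relations. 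That verification is exactly what the cited reference carries out, so your outline has no substantive gap.
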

%%%%%%%%%%%%%%%%%%%%%%%%%%%%5
Blyth et al. in \cite{Blyth} give us the structure of the non-abelian tensor square of a group $G$ with $G^{ab}$  finitely generated as follows.
\begin{thm}\cite[Theorem 1]{Blyth}\label{tensorsquare}
 Let $G$ be a group such that $G^{ab}$ is finitely generated. If  $G^{ab}$ has no element of order $2$ or if $G'$ has a complement in $G$, then $G\otimes G\cong\Gamma(G^{ab})\times G \wedge G $.
\end{thm}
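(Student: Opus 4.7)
The plan is to start from the four-term exact sequence
\[
H_{3}(G) \xrightarrow{\alpha} \Gamma(G^{ab}) \xrightarrow{\psi} G\otimes G \xrightarrow{\pi} G\wedge G \to 1
\]
obtained from the bottom exact row of the commutative diagram shown in the introduction, where $\psi(\gamma(\bar{g}))=g\otimes g$ and $\pi$ is the quotient map enforcing $g\otimes g=1_{\otimes}$. The conclusion will follow once I establish two facts: first, that $\psi$ is injective, and second, that the resulting short exact sequence $1\to\psi(\Gamma(G^{ab}))\to G\otimes G\to G\wedge G\to 1$ splits as a central extension.

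As a preliminary I would verify that $\nabla(G):=\psi(\Gamma(G^{ab}))$ is a central subgroup of $G\otimes G$; this is a direct calculation from the defining relations of the tensor square together with the observation that every element of the form $g\otimes g$ is fixed by the conjugation action. Once centrality is in hand, any homomorphic splitting of $\pi$ upgrades automatically to the internal direct product $G\otimes G\cong \Gamma(G^{ab})\times (G\wedge G)$, since a split central extension is a direct product.

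For injectivity of $\psi$ under the no-$2$-torsion hypothesis, I would invoke the classical fact that the image of $\alpha$ lies in the $2$-torsion subgroup of $\Gamma(G^{ab})$---this is the standard $H_{3}$-level manifestation of the identity $\gamma(a^{-1})=\gamma(a)$. Combined with Theorem \ref{even}, absence of $2$-torsion in $G^{ab}$ forces $\Gamma(G^{ab})$ to have no $2$-torsion, so $\alpha=0$ and $\psi$ is injective. For the alternative hypothesis, the retraction $\rho\colon G\twoheadrightarrow H\cong G^{ab}$ coming from $G=G'\rtimes H$ induces $G\otimes G\to H\otimes H=G^{ab}\otimes_{\mathbb{Z}}G^{ab}$; composing with the canonical map $\Gamma(G^{ab})\to G^{ab}\otimes G^{ab}$ on $\nabla(G)$ yields a left inverse of $\psi$, again giving injectivity.

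The main obstacle will be the construction of the splitting $\sigma\colon G\wedge G\to G\otimes G$ of $\pi$, since the naive assignment $g\wedge h\mapsto g\otimes h$ is not well-defined (one has $g\otimes g\neq 1$ in general). In the complemented case $G=G'\rtimes H$, the abelian complement $H$ supplies canonical representatives for $G^{ab}$ in $G$, and $\sigma$ is built by lifting $g\wedge h$ through these representatives; the tensor-square relations are then verified directly using that $H$ is abelian and acts trivially on itself in the relevant places. In the no-$2$-torsion case I would instead apply the classical decomposition $A\otimes A\cong \Gamma(A)\oplus (A\wedge A)$, valid for any finitely generated abelian group $A$ without elements of order $2$ (a consequence of Theorem \ref{even}), to $A=G^{ab}$: the abelianization $G\otimes G\twoheadrightarrow G^{ab}\otimes G^{ab}$ composed with the $\Gamma$-factor projection retracts $\psi$, and via the centrality of $\nabla(G)$ this retraction translates into a splitting of $\pi$. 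In either case, the central extension splits and the desired isomorphism $G\otimes G\cong \Gamma(G^{ab})\times G\wedge G$ follows.
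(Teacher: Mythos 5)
First, a point of reference: the paper does not prove this statement at all --- it is quoted as Theorem 1 of Blyth--Fumagalli--Morigi and used as a black box --- so your attempt can only be measured against that source, not against anything in the present text.

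Your treatment of the first hypothesis is essentially sound and follows the natural route. The subgroup $\nabla(G)=\psi(\Gamma(G^{ab}))$ is central since it lies in $J_{2}(G)=\ker\kappa$; the composite $\Gamma(G^{ab})\xrightarrow{\ \psi\ }G\otimes G\to G^{ab}\otimes_{\mathbb{Z}}G^{ab}$ is the canonical map $\iota\colon\gamma(\bar g)\mapsto\bar g\otimes\bar g$, which for a finitely generated abelian group with no $2$-torsion is split injective (verify on a cyclic decomposition using Theorem \ref{even}); composing the abelianization map with a retraction of $\iota$ retracts $\psi$, and a retraction onto a central subgroup does give the internal direct product $G\otimes G=\nabla(G)\times\ker r$ with $\ker r\cong(G\otimes G)/\nabla(G)=G\wedge G$. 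Your separate appeal to ``$\operatorname{im}\alpha$ is $2$-torsion'' is then redundant, and in any case is most easily justified by the same composite, since $\ker\iota$ is an elementary abelian $2$-group.

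The genuine gap is in the complemented case. The proposed left inverse of $\psi$ --- ``composing with the canonical map $\Gamma(G^{ab})\to G^{ab}\otimes G^{ab}$'' --- runs in the wrong direction: to retract $\psi$ through $G^{ab}\otimes G^{ab}$ you would need a retraction of $\iota$, and none exists when $G^{ab}$ has $2$-torsion, because $\iota$, and hence $\psi$, is then not injective. Indeed the statement as transcribed here is false in that case: for $G=C_{2}$ the derived subgroup is trivially complemented, $G\otimes G\cong C_{2}$ and $G\wedge G=1$, yet $\Gamma(G^{ab})\cong C_{4}$. What Blyth et al.\ actually prove is $G\otimes G\cong\nabla(G)\times(G\wedge G)$, and they identify $\nabla(G)$ with $\Gamma(G^{ab})$ only under the no-$2$-torsion hypothesis (the paper's Theorem 2.1); so in the complemented case the object to be split off is $\nabla(G)$ itself, via a crossed pairing $G\times G\to\nabla(G^{ab})$ induced by the retraction $G\to H$, not a copy of $\Gamma(G^{ab})$. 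Relatedly, your construction of the section $\sigma$ of $\pi$ ``by lifting through representatives in $H$'' leaves unaddressed the only nontrivial point, namely well-definedness. None of this affects the present paper, since for odd $p>3$ only the first hypothesis is ever invoked, where your argument works.
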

%%%%%%%%%%%%%%%%%%%%%%%%%5
\begin{thm}\label{Wegabelian}
 Let $G$ be an abelian group of order dividing $p^5$. Then
  \[G\otimes G\cong \left \{ \begin{array}{ll}
   \mathbb{Z}_{p^5} & \textrm{if $G \cong$ }~\textrm{$G_{1}.$}\\ \mathbb{Z}_{p^3}\oplus \mathbb{Z}_{p^2} ^{(3)}  & \textrm{if $G \cong$ }~\textrm{$G_{13}.$}\\
 \mathbb{Z}_{p^4} \oplus \mathbb{Z}_{p} ^{(3)} & \textrm{if $G \cong$ }~\textrm{$G_{26}.$}\\
 \mathbb{Z}_{p^2} ^{(4)}\oplus \mathbb{Z}_{p} ^{(5)} & \textrm{if $G \cong$ }~\textrm{$G_{43}.$}\\
  \mathbb{Z}_{p^3} \oplus\mathbb{Z}_{p}^{(8)} & \textrm{if $G \cong$ }~\textrm{$G_{51}.$}\\  \mathbb{Z}_{p^2} \oplus\mathbb{Z}_{p}^{(15)}  & \textrm{if $G \cong$ }~\textrm{$G_{66}.$}\\
   \mathbb{Z}_{p} ^{(25)} & \textrm{if $G \cong$ }~\textrm{$G_{70}.$}\\
\end{array} \right.  \]
\end{thm}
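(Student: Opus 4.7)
The plan is to apply Theorem \ref{tensorsquare} directly. Because $p>3$ is odd and $|G|$ divides $p^{5}$, the abelianization $G^{ab}=G$ is a finitely generated odd-order abelian group, hence has no element of order~$2$; consequently the hypothesis of Theorem \ref{tensorsquare} is met and yields the direct decomposition
\[
G\otimes G \;\cong\; \Gamma(G)\times (G\wedge G).
\]

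Both factors are already in hand. The same no-$2$-torsion hypothesis makes Theorem \ref{2.1} applicable, identifying $\Gamma(G^{ab})$ with $\triangledown(G)$; its isomorphism type for each of the seven abelian groups $G_{1},G_{13},G_{26},G_{43},G_{51},G_{66},G_{70}$ is already recorded in Lemma \ref{abnabla}. The companion factor $G\wedge G$ equals the Schur multiplier $\mathcal{M}(G)$ (since $G'=1$) and its structure is read off from Theorem \ref{Schur}; this is precisely the content of the abelian case of Theorem \ref{Wegabelian} established just above.

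The remaining work is arithmetical: substitute the two tables into the decomposition and form the direct sum case by case. For instance, for $G_{70}\cong C_{p}^{5}$ one combines $\Gamma(G_{70})\cong\mathbb{Z}_{p}^{(15)}$ with $G_{70}\wedge G_{70}\cong\mathbb{Z}_{p}^{(10)}$ to obtain $\mathbb{Z}_{p}^{(25)}$; for $G_{66}\cong C_{p^{2}}\times C_{p}^{3}$ one combines $\mathbb{Z}_{p^{2}}\oplus\mathbb{Z}_{p}^{(9)}$ with $\mathbb{Z}_{p}^{(6)}$ to obtain $\mathbb{Z}_{p^{2}}\oplus\mathbb{Z}_{p}^{(15)}$; and for $G_{13}\cong C_{p^{3}}\times C_{p^{2}}$ one combines $\mathbb{Z}_{p^{3}}\oplus\mathbb{Z}_{p^{2}}^{(2)}$ with $\mathbb{Z}_{p^{2}}$ to obtain $\mathbb{Z}_{p^{3}}\oplus\mathbb{Z}_{p^{2}}^{(3)}$. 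The remaining four cases are dispatched identically.

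There is no genuine obstacle here: the entire argument is mechanical bookkeeping, exactly parallel to the derivation of Lemma \ref{100}, where the very same three inputs (Theorems \ref{tensorsquare}--\ref{2.1}, Theorem \ref{Schur}, and the $\triangledown$-table) were combined to obtain $J_{2}(G)=G\otimes G$ in the abelian setting. The only point worth a sentence of justification is the verification of the hypothesis of Theorem \ref{tensorsquare} for each of the seven groups, which is immediate from $p>3$.
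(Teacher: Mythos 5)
Your proposal is correct and follows essentially the same route as the paper, which likewise invokes Theorem \ref{tensorsquare} together with the abelian computations of $G\wedge G$ and $\Gamma(G^{ab})=\triangledown(G)$ and leaves the case-by-case assembly as routine. Your write-up is in fact more careful than the paper's one-line proof: you explicitly check the no-$2$-torsion hypothesis and cite the abelian versions of the exterior-square and $\triangledown$ results (Lemma \ref{abnabla} and the abelian case of $G\wedge G\cong\mathcal{M}(G)$) rather than the non-abelian ones the paper's proof nominally points to.
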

\begin{proof}
Using Theorems \ref{tensorsquare}, \ref{Weg} and \ref{nabla}, it is trivial.
\end{proof}
%%%%%%%%%%%%%%%%%%%%%%%%%%%5
\begin{thm}\label{tensor}
 Let $G$ be a non-abelian group of order dividing $p^5$ where $p (p>3)$ is an odd prime. Then
  \[G \otimes G\cong \left \{ \begin{array}{ll}
   \mathbb{Z}_{p}^{(6)} & \textrm{if $G \cong$ }~\textrm{$G_{7}, \dots, G_{9},$}~\textrm{$G_{11,1},$}~\textrm{or $G_{12_{k}}.$}
\\   \mathbb{Z}_{p}^{(7)} & \textrm{if $G \cong$ }~\textrm{$G_{4}, \dots, G_{6},$}~\textrm{$G_{11,1},$}~\textrm{$G_{29_{a}},$}~\textrm{$G_{30},$}~\textrm{$G_{32_{a}},$}~\textrm{$G_{33_{b}}.$}\\
E_{1}\times \mathbb{Z}_{p}^{(6)} & \textrm{if $G \cong$ }~\textrm{$G_{28}$}~\textrm{or $G_{31}.$}\\
 \mathbb{Z}_{p}^{(9)}& \textrm{if $G \cong$ }~\textrm{$G_{3},$}~\textrm{$G_{42},$}~\textrm{$G_{44},$}~\textrm{$G_{46},$}~\textrm{$G_{48,1},$}~\textrm{$G_{50_{k}}.$}\\
 \mathbb{Z}_{p}^{(11)}& \textrm{if $G \cong$ }~\textrm{$G_{36},$}~\textrm{$G_{38},$}~\textrm{$G_{39},$}~\textrm{$G_{55}, \dots, G_{58},$}~\textrm{$G_{60}, \dots, G_{63}.$}\\
  \mathbb{Z}_{p}^{(12)}& \textrm{if $G \cong$ }~\textrm{$G_{54},$}~\textrm{$G_{59}.$}\\
   \mathbb{Z}_{p}^{(14)}& \textrm{if $G \cong$ }~\textrm{$G_{34}.$}\\
\mathbb{Z}_{p}^{(16)}& \textrm{if $G \cong$ }~\textrm{$G_{65}, \dots, G_{69}.$}\\
 \mathbb{Z}_{p}^{(18)}& \textrm{if $G \cong$ }~\textrm{$G_{64}.$}\\
  \mathbb{Z}_{p^2}^{(4)}& \textrm{if $G \cong$ }~\textrm{$G_{14}.$}\\
   \mathbb{Z}_{p^2}^{(2)}\oplus  \mathbb{Z}_{p}^{(2)}& \textrm{if $G \cong$ }~\textrm{$G_{24}.$}\\
   \mathbb{Z}_{p^2}^{(4)}\oplus  \mathbb{Z}_{p}^{(2)}& \textrm{if $G \cong$ }~\textrm{$G_{2}.$}\\
   \mathbb{Z}_{p^2}^{(2)}\oplus  \mathbb{Z}_{p}^{(4)}& \textrm{if $G \cong$ }~\textrm{$G_{17},$}~\textrm{$G_{18}.$}\\
    \mathbb{Z}_{p^2}^{(2)}\oplus  \mathbb{Z}_{p}^{(5)}& \textrm{if $G \cong$ }~\textrm{$G_{10},$}~\textrm{$G_{11,2},$}~\textrm{$G_{19},$}~\textrm{$G_{21}, \dots, G_{23}.$}\\
    \mathbb{Z}_{p^2}\oplus  \mathbb{Z}_{p}^{(6)}& \textrm{if $G \cong$ }~\textrm{$G_{16},$}~\textrm{$G_{20}.$}\\
     \mathbb{Z}_{p^2}\oplus  \mathbb{Z}_{p}^{(8)}& \textrm{if $G \cong$ }~\textrm{$G_{41},$}~\textrm{$G_{47},$}~\textrm{$G_{48,2},$}~\textrm{$G_{49},$}~\textrm{$G_{52},$}~\textrm{$G_{53}.$}\\
     \mathbb{Z}_{p^2}\oplus  \mathbb{Z}_{p}^{(10)}& \textrm{if $G \cong$ }~\textrm{$G_{35},$}~\textrm{$G_{37}.$}\\
      \mathbb{Z}_{p^2}^{(2)}\oplus  \mathbb{Z}_{p}^{(7)}& \textrm{if $G \cong$ }~\textrm{$G_{40},$}~\textrm{$G_{45}.$}\\
      \mathbb{Z}_{p^3}\oplus  \mathbb{Z}_{p}^{(3)}& \textrm{if $G \cong$ }~\textrm{$G_{27}.$}\\
      \mathbb{Z}_{p^3}\oplus  \mathbb{Z}_{p}^{(5)}& \textrm{if $G \cong$ }~\textrm{$G_{15}.$}\\
      \mathbb{Z}_{p^2}\oplus \mathbb{Z}_{p^3}\oplus  \mathbb{Z}_{p}^{(2)}& \textrm{if $G \cong$ }~\textrm{$G_{25}.$}\\
\end{array} \right.  \]
\end{thm}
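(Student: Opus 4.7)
The strategy is almost entirely a direct invocation of Theorem~\ref{tensorsquare} together with the work done in the preceding sections. Since $p>3$ is odd, the abelianization $G^{ab}$ is a finite abelian $p$-group and in particular has no element of order $2$, so the hypothesis of Theorem~\ref{tensorsquare} is satisfied for every non-abelian group $G$ in the list. This gives the master identity
\[
G\otimes G \;\cong\; \Gamma(G^{ab})\,\times\,(G\wedge G).
\]
Moreover, by Theorem~\ref{2.1}, $\Gamma(G^{ab})\cong\triangledown(G)$ under the same hypothesis, so the identity can be restated as $G\otimes G\cong \triangledown(G)\times (G\wedge G)$, whose two factors have already been tabulated in Lemma~\ref{nabla} and Theorem~\ref{Weg} respectively. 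So the plan is simply to run through the seventy isomorphism types, read off $\triangledown(G)$ and $G\wedge G$, take their direct product, and put the result in invariant-factor form.

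Concretely, I would organize the cases by the shape of $G^{ab}$, which is the only input needed to compute $\Gamma(G^{ab})$ via Theorem~\ref{even}. For each presentation in Theorem~\ref{main}, $G^{ab}$ is obtained by abelianizing the polycyclic presentation: commutator relations kill the corresponding $g_k$, while a power relation $g_i^p=g_j$ (with $g_j$ not in the derived subgroup) forces $g_i$ to have higher order. This gives, for non-abelian groups, one of the elementary shapes $\mathbb Z_p^{(2)}$, $\mathbb Z_{p^2}\oplus\mathbb Z_p$, $\mathbb Z_{p^2}^{(2)}$, $\mathbb Z_{p^3}\oplus\mathbb Z_p$, $\mathbb Z_p^{(3)}$, $\mathbb Z_p^{(4)}$, or $\mathbb Z_p^{(5)}$. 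Applying
\[
\Gamma(\mathbb Z_{p^a}\times\mathbb Z_{p^b})\cong\mathbb Z_{p^a}\oplus\mathbb Z_{p^b}\oplus(\mathbb Z_{p^a}\otimes\mathbb Z_{p^b})
\cong\mathbb Z_{p^a}\oplus\mathbb Z_{p^b}\oplus\mathbb Z_{p^{\min(a,b)}}
\]
iteratively (as $p$ is odd, $\Gamma(\mathbb Z_{p^n})\cong\mathbb Z_{p^n}$) yields $\Gamma(G^{ab})$ in closed form.

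With $\Gamma(G^{ab})$ in hand, one simply multiplies by $G\wedge G$ from Theorem~\ref{Weg} and collects invariant factors. For illustration: for $G\cong G_{17}$ one has $G^{ab}\cong\mathbb Z_{p^2}\times\mathbb Z_p$, so $\Gamma(G^{ab})\cong\mathbb Z_{p^2}\oplus\mathbb Z_p^{(2)}$, and $G\wedge G\cong\mathbb Z_{p^2}\oplus\mathbb Z_p^{(2)}$, giving $G\otimes G\cong\mathbb Z_{p^2}^{(2)}\oplus\mathbb Z_p^{(4)}$, as claimed. The capable cases $G_{28}$ and $G_{31}$ are handled identically: $G\wedge G\cong E_1\times\mathbb Z_p^{(3)}$ and $\Gamma(G^{ab})\cong\mathbb Z_p^{(3)}$ give $G\otimes G\cong E_1\times\mathbb Z_p^{(6)}$.

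The whole argument is therefore a direct assembly rather than a genuine calculation, and the only real obstacle is bookkeeping: (i) for the handful of groups whose abelianization is not read off from Lemma~\ref{nabla} verbatim (e.g.\ $G_{16}$--$G_{24}$), one must carefully derive $G^{ab}$ from the polycyclic presentation and avoid off-by-one mistakes in the exponents of $g_1$ arising from chains like $g_1^{p^2}=g_4^p=1$; and (ii) after forming the direct product $\triangledown(G)\times(G\wedge G)$, one must recombine summands of the form $\mathbb Z_{p^a}\oplus\mathbb Z_{p^b}$ into the invariant-factor form shown in the theorem. Once these are done systematically, the listed structure of $G\otimes G$ falls out for every non-abelian group of order dividing~$p^5$.
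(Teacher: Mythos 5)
Your proposal is correct and follows exactly the paper's route: the authors likewise prove this theorem by invoking Theorem~\ref{tensorsquare} to get $G\otimes G\cong\Gamma(G^{ab})\times(G\wedge G)$ and then reading off the two factors from Lemma~\ref{nabla} and Theorem~\ref{Weg}. The only difference is that you spell out the bookkeeping (computing $G^{ab}$ from the polycyclic presentations and assembling invariant factors), which the paper leaves implicit.
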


\begin{proof}
Using Theorems \ref{tensorsquare}, \ref{Weg} and \ref{nabla}, we have the result.
\end{proof}
%%%%%%%%%%%%%%%%%%%%%%%%%%%%%%%5
%%%%%%%%%%%%%%%%%%%%%%%%%%%%%%%%%%%%5
%%%%%%%%%%%%%%%%%%%%%%%%%555555555555
\section{tensor center and exterior center of all groups of order dividing $p^5$}
The purpose of this section is to obtain the structure of $Z^{\otimes}(G)$ and $Z^{\wedge}(G)$ when $G$ is a group of order dividing $p^5$.
%%%%%%%%%%%%%%%%%%%%%%%55
In the following theorem the tensor center of an arbitrary finite abelian group is determined.
\begin{lem} \cite[Proposition 1.8]{Ellis5}\label{trivial}
Let $G$ be a finite abelian p-group of order $p^5$. Then  $Z^{\otimes}(G)=1.$
\end{lem}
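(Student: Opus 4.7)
The plan is to show, for every non-identity $g\in G$, that there exists $h\in G$ with $g\otimes h\neq 1_{\otimes}$; this is exactly the statement $Z^{\otimes}(G)=1$.

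First I would exploit that $G$ is abelian: the conjugation action of $G$ on itself is trivial, so the defining relations of the non-abelian tensor square collapse to ordinary $\mathbb{Z}$-bilinearity in both slots. Consequently $G\otimes G$ is already abelian and can be identified with the classical tensor product $G\otimes_{\mathbb{Z}} G$. This identification is compatible with Theorem \ref{3.1}: since $G^{ab}=G$ has no $2$-torsion, one gets $G\otimes G\cong\Gamma(G)\oplus\mathcal{M}(G)$, whose order matches $|G\otimes_{\mathbb{Z}} G|$ by a direct check using Theorem \ref{even}.

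Second, I would decompose $G=\bigoplus_{i=1}^{r}\langle x_i\rangle$ with $|x_i|=p^{a_i}$. Combining $\mathbb{Z}_{p^a}\otimes_{\mathbb{Z}}\mathbb{Z}_{p^b}\cong \mathbb{Z}_{p^{\min(a,b)}}$ with the distributivity of $\otimes_{\mathbb{Z}}$ over direct sums gives
\begin{equation*}
G\otimes G \;\cong\; \bigoplus_{i,j=1}^{r}\mathbb{Z}_{p^{\min(a_i,a_j)}},
\end{equation*}
the $(i,j)$-summand being generated by the symbol $x_i\otimes x_j$.

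Finally, fix $1\neq g=\prod_{i}x_i^{k_i}$ and pick an index $i_0$ with $k_{i_0}\not\equiv 0\pmod{p^{a_{i_0}}}$. Taking $h=x_{i_0}$, the $(i_0,i_0)$-component of $g\otimes h$ equals $k_{i_0}\pmod{p^{a_{i_0}}}$, which is nonzero; hence $g\otimes h\neq 1_{\otimes}$ and $g\notin Z^{\otimes}(G)$. This forces $Z^{\otimes}(G)=1$, with no case analysis over the seven abelian isomorphism classes $G_1, G_{13}, G_{26}, G_{43}, G_{51}, G_{66}, G_{70}$ needed. The only step that requires genuine care is the initial identification $G\otimes G\cong G\otimes_{\mathbb{Z}} G$ for abelian $G$; once it is in place, the remainder is a transparent component-wise inspection.
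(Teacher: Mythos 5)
The paper gives no proof of this lemma at all: it is imported wholesale as Proposition 1.8 of \cite{Ellis5}, so there is no internal argument to measure yours against. Your proposal is correct and supplies a self-contained elementary proof; indeed it proves more than is asserted, since nothing in it uses the order $p^5$ or the parity of $p$ --- it shows $Z^{\otimes}(G)=1$ for every finite abelian group. The one point that deserves an explicit reference rather than the phrase ``the relations collapse to bilinearity'' is the identification $G\otimes G\cong G\otimes_{\mathbb{Z}}G$ for abelian $G$: bilinearity of the generators does not by itself show that distinct generators $g\otimes h$ and $g'\otimes h'$ commute, so one should invoke the derived identity $(g\otimes h)(g'\otimes h')(g\otimes h)^{-1}={}^{[g,h]}g'\otimes{}^{[g,h]}h'$, valid in any non-abelian tensor square (see \cite{Brown}), which for abelian $G$ forces $G\otimes G$ to be abelian, after which the universal property yields the canonical isomorphism with $G\otimes_{\mathbb{Z}}G$. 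Granting that, the decomposition into cyclic summands, the computation of the $(i_0,i_0)$-component of $g\otimes x_{i_0}$, and the conclusion $g\notin Z^{\otimes}(G)$ for $g\neq 1$ are all sound; the appeal to Theorem \ref{3.1} is a harmless consistency check rather than a needed step. What the paper's citation buys is brevity; what your argument buys is independence from \cite{Ellis5} and a uniform treatment of all seven abelian types at once.
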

%%%%%%%%%%%%%%%%%%%%%%%%%%%%
Following theorem is a key tool for the next investigations.
\begin{thm}\cite[Proposition 16 (v)]{Ellis5}\label{tensor center}
Let $G$ be a group and  $N\unlhd G$. Then $G/N\otimes G/N \cong G\otimes G$ if and only if $N\leq Z^{\otimes}(G).$
\end{thm}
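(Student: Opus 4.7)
The plan is to prove both directions of the iff by analyzing the natural homomorphism $\pi\colon G\otimes G \to G/N \otimes G/N$ induced by the quotient $G \twoheadrightarrow G/N$, sending a generator $g \otimes h$ to $gN \otimes hN$. This $\pi$ is automatically well-defined and surjective because the defining Brown--Loday relations of the non-abelian tensor square are preserved under applying the quotient map in each coordinate. The theorem then reduces to characterizing when $\pi$ is an isomorphism.

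For the ``only if'' direction, I would argue directly: suppose $\pi$ is an isomorphism. For any $n \in N$ and any $h \in G$, we have $\pi(n \otimes h) = (nN) \otimes (hN) = 1 \otimes hN = 1_{G/N \otimes G/N}$. Injectivity of $\pi$ forces $n \otimes h = 1_{G \otimes G}$. Since this holds for every $h \in G$, we conclude $n \in Z^{\otimes}(G)$, so $N \leq Z^{\otimes}(G)$.

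For the ``if'' direction, assume $N \leq Z^{\otimes}(G)$. My plan is to construct an explicit inverse $\psi\colon G/N \otimes G/N \to G \otimes G$ by $\psi(gN \otimes hN) = g \otimes h$. The crux is well-definedness: I must show $gn_1 \otimes hn_2 = g \otimes h$ in $G \otimes G$ for all $n_1, n_2 \in N$. Using the relation $gg' \otimes h = ({}^g g' \otimes {}^g h)(g \otimes h)$ with $g' = n_1$ gives $gn_1 \otimes h = ({}^g n_1 \otimes {}^g h)(g \otimes h)$; normality of $N$ places ${}^g n_1 \in N \leq Z^{\otimes}(G)$, so the first factor collapses, yielding $gn_1 \otimes h = g \otimes h$. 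The analogous computation on the right uses $g \otimes hn_2 = (g \otimes h)({}^h g \otimes {}^h n_2)$ and requires the \emph{bilateral} vanishing $x \otimes z = 1$ whenever $z \in Z^{\otimes}(G)$. Once well-definedness is established, $\psi$ evidently inverts $\pi$ on generators and therefore on the whole group.

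The main obstacle is exactly this bilateral nullness of the tensor center, since the definition in the paper is stated only on the left. I would derive it from the identity $(g \otimes h)^{-1} = {}^{g}g^{-1} \otimes {}^{g}h$, which is obtained by specializing the first defining relation at $g' = g^{-1}$ (and using the consequence $e \otimes h = 1$), together with the dual identity $(g \otimes h)^{-1} = {}^{h}g \otimes {}^{h}h^{-1}$ from the second relation. A short manipulation using these, combined with the fact that $Z^{\otimes}(G)$ is closed under conjugation (by normality arguments applied to ${}^{x}n$) and under inversion (a direct check that $n \in Z^\otimes(G)$ implies $n^{-1} \otimes y = 1$ via $1 = nn^{-1} \otimes y$), delivers the symmetric version $h \otimes n = 1$. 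Granting this, both coordinate computations go through identically, and the inverse $\psi$ exists.
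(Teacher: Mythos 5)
The paper does not prove this statement at all --- it is imported verbatim as \cite[Proposition 16 (v)]{Ellis5} --- so there is no in-paper argument to compare yours against; I can only judge your proposal on its own terms. Your overall strategy is the standard (and correct) one: reduce to the question of when the canonical epimorphism $\pi\colon G\otimes G\to G/N\otimes G/N$ is injective, get the ``only if'' direction from $nN\otimes hN=1$ together with $1\otimes x=1_\otimes$, and get the ``if'' direction by building the inverse on generators, using normality of $N$ to keep ${}^{g}n_1$ and ${}^{h}n_2$ inside $N\leq Z^{\otimes}(G)$. (One pedantic caveat: as transcribed, the theorem asserts an abstract isomorphism $G/N\otimes G/N\cong G\otimes G$, and your ``only if'' argument really proves the statement for the \emph{natural} map $\pi$; for the finite groups of this paper the two are equivalent, and Ellis's actual proposition is phrased for the natural map, so this is a defect of the transcription rather than of your proof.)

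The one genuine soft spot is exactly the step you flag as the crux: the bilateral vanishing $x\otimes n=1_\otimes$ for $n\in Z^{\otimes}(G)$. The claim is true, but the route you sketch --- combining $(g\otimes h)^{-1}={}^{g}g^{-1}\otimes{}^{g}h$ with $(g\otimes h)^{-1}={}^{h}g\otimes{}^{h}h^{-1}$ --- does not visibly close: specializing either identity at $h=n$ only re-expresses $x\otimes n$ as another tensor with an element of $Z^{\otimes}(G)$ in the \emph{right} slot (e.g.\ $(x\otimes n)^{-1}=x^{-1}\otimes{}^{x}n$), so you are chasing your tail. The clean fix is to check that the assignment $g\otimes h\mapsto (h\otimes g)^{-1}$ respects both defining relations (for the first relation this amounts to $hh'\otimes g=({}^{h}h'\otimes{}^{h}g)(h\otimes g)$, which is the first defining relation with the roles of the letters permuted, and dually for the second), hence extends to an involutive endomorphism $\sigma$ of $G\otimes G$; then $n\otimes x=1_\otimes$ gives $(x\otimes n)^{-1}=\sigma(n\otimes x)=1_\otimes$ at once. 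With that lemma in hand, your coset-representative computation goes through, and the remaining check that $\psi$ respects the defining relations of $G/N\otimes G/N$ (which is what ``well-defined on a presented group'' ultimately requires, beyond representative-independence) is immediate. So: right proof, with one lemma whose proposed derivation should be replaced by the swap-homomorphism argument.
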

%%%%%%%%%%%%%%%%%%%%%%%%%%%%%%
The following corollary is a straightforward consequence of Lemma \ref{2.1} and Theorem \ref{1.6}.
%%%%%%%%%%%%%%%%%%%%%%%%%%%%%%%%
\begin{cor}
 \label{epicenter}
 Let $G$ be a group of order dividing $p^5$ where $p$ $ (p>3)$ is an odd prime. Then
  \[Z^{\wedge}(G)\cong\left \{ \begin{array}{ll}
     1 & \textrm{if $G \cong$ }~\textrm{$G_{2},$}~\textrm{$G_{3}$}~\textrm{$G_{10},$}~\textrm{$G_{11,2},$}~\textrm{$G_{17},$}~\textrm{$G_{18},$}~\textrm{$G_{28},$}~\textrm{$G_{31},$}~\textrm{$G_{34},$}~\textrm{$G_{40},$}~\textrm{$G_{41},$}\\ &~\textrm{$G_{43},$}~\textrm{$G_{45},$}~\textrm{$G_{48,2},$}~\textrm{$G_{49},$}~\textrm{$G_{54},$}~\textrm{$G_{59},$}~\textrm{$G_{17},$}~\textrm{$G_{64},$}~\textrm{$G_{70}.$}\\
      \mathbb{Z}_{p} & \textrm{if $G \cong$ }~\textrm{$G_{4}, \dots, G_{6}, $}~\textrm{$G_{10},$}~\textrm{$G_{14},$}~\textrm{$G_{16},$}~\textrm{$G_{25},$}~\textrm{$G_{29_{a}},$}~\textrm{$G_{30},$}~\textrm{$G_{32_{a}},$}~\textrm{$G_{33_{b}},$}~\textrm{$G_{35},$}\\ &~\textrm{$G_{36}, \dots, G_{39}, $}~\textrm{$G_{55}, \dots, G_{58}, $}~\textrm{$G_{61}, \dots, G_{63}, $}~\textrm{$G_{65}, \dots, G_{69}. $}
      \\ \mathbb{Z}_{p}^{(2)} & \textrm{if $G \cong$ }~\textrm{$G_{7}, \dots, G_{9}, $}~\textrm{$G_{11,1},$}~\textrm{$G_{12_{k}},$}~\textrm{$G_{21}, $}~\textrm{$G_{42}, $}~\textrm{$G_{44}, $}~\textrm{$G_{46}, $}~\textrm{$G_{47}, $}~\textrm{$G_{48,1}, $}\\ &~\textrm{$G_{50_{k}}.$}\\
      \mathbb{Z}_{p^2}  & \textrm{if $G \cong$ }~\textrm{$G_{15},$}~\textrm{$G_{20}, $}~\textrm{$G_{22}, $}~\textrm{$G_{23}, $}~\textrm{$G_{51}, \dots, G_{53}. $}\\
       \mathbb{Z}_{p^3} & \textrm{if $G \cong$ }~\textrm{$G_{26}, $}~\textrm{$G_{27}.$}\\
       \mathbb{Z}_{p^5}  & \textrm{if $G \cong$ }~\textrm{$G_{70}.$}\\
\end{array} \right.  \]
 \end{cor}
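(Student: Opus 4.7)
The statement is flagged as a ``straightforward consequence of Lemma \ref{2.1} and Theorem \ref{1.6}'', so the plan is essentially to read off the answer via the isomorphism $Z^{\wedge}(G)\cong Z^{\ast}(G)$. Concretely, my plan is to invoke Lemma \ref{2.1} first, which asserts $Z^{\ast}(G)\cong Z^{\wedge}(G)$ for any group $G$, and then transport the explicit classification of the epicenter provided in Theorem \ref{1.6} across this isomorphism.

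The execution would proceed in one short step: for each isomorphism type $G_i$ appearing in the classification of groups of order dividing $p^5$ listed in Theorem \ref{main}, Theorem \ref{1.6} already specifies $Z^{\ast}(G_i)$ up to isomorphism as one of $1$, $\mathbb{Z}_p$, $\mathbb{Z}_p^{(2)}$, $\mathbb{Z}_{p^2}$, $\mathbb{Z}_{p^3}$, or $\mathbb{Z}_{p^5}$. Applying Lemma \ref{2.1} replaces $Z^{\ast}(G_i)$ by $Z^{\wedge}(G_i)$ with no change in the abstract group structure, so the table of values for $Z^{\wedge}(G)$ is identical to the one in Theorem \ref{1.6}.

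There is no real obstacle to overcome; no new calculation, structural theorem, or case analysis is required beyond quoting the two results. The only care needed is bookkeeping, making sure the case distinctions (the indexing of the groups $G_1, \ldots, G_{70}$, including the parametric families $G_{11,k}$, $G_{12_k}$, $G_{29_a}$, $G_{32_a}$, $G_{33_b}$, $G_{48_k}$, $G_{50_k}$) are transcribed verbatim from Theorem \ref{1.6} into the statement of the corollary. Hence the proof reduces to the single sentence: ``By Lemma \ref{2.1} we have $Z^{\wedge}(G)\cong Z^{\ast}(G)$, and substituting the values of $Z^{\ast}(G)$ from Theorem \ref{1.6} yields the table.''
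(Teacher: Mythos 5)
Your proposal is correct and is exactly the paper's own argument: the paper states the corollary as "a straightforward consequence of Lemma \ref{2.1} and Theorem \ref{1.6}" and gives no further proof, relying precisely on the isomorphism $Z^{\wedge}(G)\cong Z^{*}(G)$ to transcribe the epicenter table. Nothing further is needed.
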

%%%%%%%%%%%%%%%%%%%%%%%%%%%%%%%%%
In next theorem we obtain tensor center of all groups of order dividing $p^5.$
\begin{thm}
 Let $G$ be a non-abelian group of order dividing $p^5$ where $p$ $ (p>3)$ is prime. Then
  \[Z^{\otimes}(G)\cong \left \{ \begin{array}{ll}
     $1$ & \textrm{if $G \cong$ }~\textrm{$G_{1}, \dots, G_{3},$}~\textrm{$G_{10},$}~\textrm{$G_{11,2},$}\textrm{$G_{13},$}~\textrm{$G_{15}, \dots, G_{18},$}~\textrm{$G_{20},$}~\textrm{$G_{25},$}~\textrm{$G_{26},$}\\ & ~\textrm{$G_{28},$}~\textrm{$G_{31},$}~\textrm{$G_{34},$}~\textrm{$G_{35},$}~\textrm{$G_{37},$}~\textrm{$G_{40},$}~\textrm{$G_{41},$}~\textrm{$G_{45},$}~\textrm{$G_{48,2},$}~\textrm{$G_{49},$}~\textrm{$G_{51},$}~\textrm{$G_{54},$}\\ &~\textrm{$G_{59},$}~\textrm{$G_{64},$}~\textrm{$G_{66},$}~\textrm{$G_{70}.$}\\
        \mathbb{Z}_{p} & \textrm{$if $ $G \cong$ }~\textrm{$G_{4}, \dots, G_{6},$}~\textrm{$G_{14},$}~\textrm{$G_{19},$}~\textrm{$G_{21}, \dots, G_{24},$}~\textrm{$G_{27},$}~\textrm{$G_{29_{a}},$}~\textrm{$G_{30},$}~\textrm{$G_{32_{a}},$}\\ &~\textrm{$G_{33_{b}},$}~\textrm{$G_{36},$}~\textrm{$G_{38},$}~\textrm{$G_{39},$}~\textrm{$G_{52},$}~\textrm{$G_{53},$}~\textrm{$G_{55}, \dots, G_{58}$}~\textrm{$G_{60}, \dots, G_{63},$}~\textrm{$G_{67}, \dots, G_{69}.$}\\
  \mathbb{Z}_{p}^{(2)}& \textrm{if $G \cong$ }~\textrm{$G_{7}, \dots, G_{9},$}~\textrm{$G_{11,1},$}~\textrm{$G_{12_{k}},$}~\textrm{$G_{42},$}~\textrm{$G_{44},$}~\textrm{$G_{46},$}~\textrm{$G_{48,1},$}~\textrm{$G_{50_{k}}.$}\\
\end{array} \right.  \]
\end{thm}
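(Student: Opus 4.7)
The plan is to combine the upper bound from Lemma \ref{88}, namely $Z^{\otimes}(G) \leq Z^{\wedge}(G)$, with the decision criterion of Theorem \ref{tensor center}: a normal subgroup $N$ of $G$ lies in $Z^{\otimes}(G)$ if and only if $(G/N) \otimes (G/N) \cong G \otimes G$. Since Corollary \ref{epicenter} already gives the structure of $Z^{\wedge}(G)$ and Theorem \ref{tensor} (together with Theorem \ref{Wegabelian}) gives $G \otimes G$, the problem reduces to the following: for each candidate normal subgroup $N \leq Z^{\wedge}(G)$, compute the order of $(G/N) \otimes (G/N)$ and compare with $|G \otimes G|$; the maximal $N$ for which equality holds is $Z^{\otimes}(G)$.

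For the seven abelian groups $G_{1}$, $G_{13}$, $G_{26}$, $G_{43}$, $G_{51}$, $G_{66}$, $G_{70}$ of order $p^5$ appearing in the statement, Lemma \ref{trivial} yields $Z^{\otimes}(G) = 1$ at once. For the non-abelian groups the argument is organized by the structure of $Z^{\wedge}(G)$ recorded in Corollary \ref{epicenter}. When $Z^{\wedge}(G) = 1$ there is nothing to check. When $Z^{\wedge}(G) \cong \mathbb{Z}_p$ the only non-trivial candidate is $N = Z^{\wedge}(G)$; one identifies $G/N$ inside the classification of groups of order $p^4$ from \cite{Ghorbanzadeh}, reads off $(G/N) \otimes (G/N)$ from the tables there, and compares with $|G \otimes G|$. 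When $Z^{\wedge}(G)$ is of higher order, namely $\mathbb{Z}_p^{(2)}$, $\mathbb{Z}_{p^2}$, $\mathbb{Z}_{p^3}$, or $\mathbb{Z}_{p^5}$, one tests each proper subgroup of $Z^{\wedge}(G)$ in turn, working upward from the smallest, until the maximal $N$ satisfying the criterion of Theorem \ref{tensor center} is located.

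The principal obstacle is bookkeeping across the roughly $70$ cases: for each group one must recognize the quotient $G/N$ inside the classifications of groups of order $p^3$ or $p^4$, retrieve the relevant tensor square, and compare orders. The genuinely delicate cases are those in which $Z^{\otimes}(G)$ is strictly smaller than $Z^{\wedge}(G)$. For example, for $G \cong G_{22}, G_{23}, G_{52}, G_{53}$ one has $Z^{\wedge}(G) \cong \mathbb{Z}_{p^2}$ but the assertion is $Z^{\otimes}(G) \cong \mathbb{Z}_p$; here the unique subgroup of order $p$ of $Z^{\wedge}(G)$ must be shown to satisfy the order-equality criterion while the full $Z^{\wedge}(G)$ fails it. Similarly, for $G \cong G_{15}, G_{20}$, where $Z^{\wedge}(G) \cong \mathbb{Z}_{p^2}$ yet $Z^{\otimes}(G) = 1$, one must verify that even the minimal non-trivial subgroup of $Z^{\wedge}(G)$ already fails the criterion, by producing a strict inequality $|(G/\mathbb{Z}_p) \otimes (G/\mathbb{Z}_p)| \neq |G \otimes G|$. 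The analogous analysis handles $G_{27}$, where $Z^{\wedge}(G) \cong \mathbb{Z}_{p^3}$ and $Z^{\otimes}(G) \cong \mathbb{Z}_p$, requiring a two-step comparison at subgroups of orders $p$ and $p^2$. In every instance the verification is a numerical comparison of orders drawn from Theorem \ref{tensor} and the corresponding data in \cite{Ghorbanzadeh}, and once it is carried out for every row of Corollary \ref{epicenter} the classification stated in the theorem is complete.
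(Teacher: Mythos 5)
Your proposal follows essentially the same route as the paper: the upper bound $Z^{\otimes}(G)\leq Z^{\wedge}(G)$ from Lemma \ref{88} together with Corollary \ref{epicenter}, the quotient criterion of Theorem \ref{tensor center} combined with the order-$p^4$ tensor-square tables of \cite{Ghorbanzadeh} to certify the lower bound, and Lemma \ref{trivial} for the abelian groups. If anything, you are more explicit than the paper about the delicate cases where $Z^{\otimes}(G)$ is strictly smaller than $Z^{\wedge}(G)$ (e.g. $G_{15}$, $G_{22}$, $G_{27}$, $G_{52}$), which the paper disposes of rather tersely, so no gap to report.
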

\begin{proof}
\begin{itemize}
\item[$(i).$] Let $G$ be isomorphic to one of groups  $G_{2}, G_{3},G_{10}, G_{11,2}, G_{17}, G_{18},$\\$  G_{28}, G_{31}G_{34}, G_{40}, G_{41}, G_{43}, G_{45}, G_{48,2}, G_{49}, G_{59}, G_{64}, G_{18}$ or $G_{70}$.\newline Clearly by using Lemmas \ref{88} and \ref{epicenter}, $G$ is  capable and $Z^{\otimes}(G) \leq Z^{\wedge}(G)=Z^{*}(G)\cong 1$, so $Z^{\otimes}(G)\cong 1$.
\item[$(ii).$]
 Let $G\cong  G_{1}, G_{13}, G_{26}, G_{43}, G_{51}, G_{66}$ or $G_{70}.$ By Theorem \ref{trivial}, $Z^{\otimes}(G)\cong 1.$
 \item[$(iii).$]
 Let $G\cong G_{4}$. Using Theorem \ref{tensor center} and put $N \cong \mathbb{Z}_{p}$, using Table $3.$ of \cite{Ghorbanzadeh}, we have  $G/N\otimes G/N \cong \mathbb{Z}_{p}^{(7)}.$ On the other hand by Theorem \ref{tensor}, since $G\otimes G\cong \mathbb{Z}_{p}^{(7)}$, we have  $ \mathbb{Z}_{p} \leq Z^{\otimes}(G).$ Again using Lemmas \ref{88} and \ref{epicenter}, we have  $Z^{\otimes}(G)\cong \mathbb{Z}_{p}$.
 \item[$(iv).$] Let $G$ be isomorphic to one of groups  $G_{5}, G_{6}, G_{14}, G_{24}, G_{29_{a}}, G_{30}, G_{33_{b}}, G_{34},$\newline$ G_{36}, G_{38}, G_{39}, G_{55}, G_{56}, G_{57}, G_{58}, G_{60}, G_{61}, G_{62}, G_{63}, G_{65}, G_{67}, G_{68} $ or $G_{69}$. The proof is completely similar to $(iii).$
 \item[$(v).$]
 Let $G\cong G_{7}$. Using Theorem \ref{tensor center} and put $N \cong \mathbb{Z}_{p}^{(2)},$ the Table $3.$ of \cite{Ghorbanzadeh} shows $G/N\otimes G/N \cong \mathbb{Z}_{p}^{(6)}.$ On the other hand, by Theorem \ref{tensor}, since $G\otimes G\cong \mathbb{Z}_{p}^{(6)}$, $   \mathbb{Z}_{p}^{(2)} \leq Z^{\otimes}(G).$ Again using Lemmas \ref{88} and \ref{epicenter}, we have  $Z^{\otimes}(G)\cong \mathbb{Z}_{p}^{(2)}$.
 \item[$(vi).$] Let $G$ be isomorphic to one of groups  $G_{8}, G_{9}, G_{11,1}, G_{42}, G_{44}, G_{46}, G_{48,1} $ or $ G_{50_{k}}$. The proof  is completely similar to $(v).$
 \item[$(vii).$] Let $G$ be isomorphic to one of the groups  $G_{15}, G_{16}, G_{20}, G_{35} $ or $G_{37}$. \newline There is no normal subgroup $N$ such that $G\otimes G\cong G/N\otimes G/N $, so $Z^{\otimes}(G)\cong 1$.
\end{itemize}
\end{proof}

 Here we summarize the results of the paper in the next tables. In the first table $cl(G)($ the nilpotency class of $G)$, $\mathcal{M}(G), Z(G), G', G^{ab} (G/G'), \triangledown(G)$, $J_{2}(G)$ and in the second table $cl(G), \mathcal{M}(G), G\wedge G, Z^{\wedge}(G), G\otimes G, {Z}^{\otimes}(G)$  of all groups of order dividing $p^5$ are given.
%%%%%%%%%%%%%%%%%%%%%%%%%%%%%%
%%%%%%%%%%%%%%%%%%%%%%%%%%%%5
%%%%%%%%%%%%%%%%%%%%%%%%%%%%%%%%%
\begin{table}[h!]
\centering
\caption{Fig. 1.}
\begin{tabular}{|c|c|c|c|c|c|c|c|}
 \hline
\textbf{Type of} \large{$G$} &\large{$cl(G)$} &\large{$\mathcal{M}(G)$} &\large{$Z(G)$}&\large{ $G'$}&\large{$G^{ab}$}&\large{$\bigtriangledown(G)$} &\large{$J_{2}(G)$}\\
\hline
\hline
$G_{1}$  &$1$ &$1$& $\mathbb{Z}_{p^5 }$ & $1$ & $ \mathbb{Z}_{p^5 }$ & $ \mathbb{Z}_{p^5 }$& $\mathbb{Z}_{p^5 }$\\
 \hline
$G_{2}$ &$2$ & $\mathbb{Z}_{p}^{(3)}$& $\mathbb{Z}_{p}^{(3)}$ &$ \mathbb Z_{p}$ & $\mathbb Z_{p^{2}}^{(2)}$ & $\mathbb Z_{p^{2}}^{(3)}$  &$ \mathbb Z_{p}^{(3)} \oplus \mathbb Z_{p^2}^{(3)}$\\
\hline
 $G_{3}$  &$3$  & $\mathbb{Z}_{p}^{(3)}$&$ \mathbb{Z}_{p}^{(2) }$ &$\mathbb Z_{p}^{(3)}$ &  $\mathbb Z_{p}^{(2)}$ & $\mathbb Z_{p}^{(3)}$&$\mathbb Z_{p}^{(6)}$\\
\hline
$G_{4}$  &$3$  & $ \mathbb{Z}_{p}$&$ \mathbb{Z}_p^{(2)}$ &$ \mathbb{Z}_p^{(3)} $ &  $\mathbb Z_{p}^{(2)}$ & $\mathbb Z_{p}^{(3)}$&$\mathbb Z_{p}^{(4)} $\\
\hline
 $G_{5}$ &$3$    & $\mathbb{Z}_{p}$&$\mathbb{Z}_p^{(2)}$ &$\mathbb{Z}_p^{(3)}$ &  $\mathbb Z_{p}^{(2)}$ & $\mathbb Z_{p}^{(3)}$ &$\mathbb Z_{p}^{(4)} $\\
 \hline
 $G_{6}$  &$3$ &$\mathbb{Z}_{p}$&$\mathbb{Z}_p^{(2)}$ &$\mathbb{Z}_p^{(3)}$ &  $\mathbb Z_{p}^{(2)}$ & $\mathbb Z_{p}^{(3)}$ &$\mathbb Z_{p}^{(4)} $ \\
\hline
 $G_{7}$  &$3$&$1$& $\mathbb{Z}_p^{(2)}$ &$\mathbb{Z}_p^{(3)}$ &  $\mathbb Z_{p}^{(2)}$ & $\mathbb Z_{p}^{(3)}$ &$\mathbb Z_{p}^{(3)} $ \\
\hline
$G_{8}$ &$3$  &$1$& $\mathbb{Z}_p^{(2)}$ &$ \mathbb{Z}_p^{(3)}$ &  $\mathbb Z_{p}^{(2)}$& $\mathbb Z_{p}^{(3)}$&$\mathbb Z_{p}^{(3)} $ \\
\hline
 $G_{9}$   &$3$ &$1$&$\mathbb{Z}_p^{(2)}$ &$\mathbb{Z}_p^{(3)}$ &  $ \mathbb Z_{p}^{(2)}$ & $ \mathbb Z_{p}^{(3)}$ &$\mathbb Z_{p}^{(3)} $ \\
\hline
 $G_{10} $  &$3$ &$\mathbb{Z}_{p}$ &$\mathbb{Z}_p^{(2)}$ & $\mathbb{Z}_p^{(3)}$ &  $\mathbb Z_{p}^{(2)}$ & $\mathbb Z_{p}^{(3)}$  & $\mathbb Z_{p}^{(4)} $\\
\hline
  \small {$G_{11,1}=G_{11_{k}} $ if $k\neq \frac{p-1}{2}$}  &$3$ & $1$ & $\mathbb{Z}_p^{(2)}$ &$\mathbb{Z}_p^{(3)}$ &  $\mathbb Z_{p}^{(2)}$ &  $\mathbb Z_{p}^{(3)}$ &$\mathbb Z_{p}^{(3)} $ \\
\hline
 \small {$G_{11,2}=G_{11_{k}} $ if $k=\frac{p-1}{2}$} &$3$ &$\mathbb Z_{p}$ & $\mathbb{Z}_p^{(2)}$ &$\mathbb{Z}_p^{(3)}$ &  $\mathbb Z_{p}^{(2)}$ &  $\mathbb Z_{p}^{(3)}$ &$\mathbb Z_{p}^{(4)} $ \\
\hline
 $G_{12_{k}} $  &$3$&$1$&$\mathbb{Z}_p^{(2)}$ &$\mathbb Z_{p}^{(3)}$ &  $\mathbb Z_{p}^{(2)}$ & $\mathbb Z_{p}^{(3)}$& $\mathbb Z_{p}^{(3)} $ \\
\hline
 $G_{13} $  &$1$ & $\mathbb{Z}_{p^2}$&$\mathbb{Z}_{p^2}\oplus \mathbb{Z}_{p^3}$ &$1$& $\mathbb{Z}_{p^3}\oplus \mathbb{Z}_{p^2}$ &  $\mathbb{Z}_{p^3}\oplus \mathbb{Z}_{p^2}^{(2)}$& $\mathbb {Z}_{p^2}^{(3)} \oplus \mathbb Z_{p^3} $ \\
\hline
 $G_{14} $ &$2$&$\mathbb{Z}_{p}$ & $\mathbb{Z}_{p^2}\oplus \mathbb{Z}_{p}$ &$\mathbb Z_{p}$ &  $\mathbb{Z}_{p^2}^{(2)}$ & $\mathbb{Z}_{p^2}^{(3)}$  &$\mathbb  Z_{p^2}^{(3)}\oplus \mathbb{Z}_{p}$\\
\hline
 $G_{15} $ &$2$ &$\mathbb{Z}_{p}^{(2)}$ & $\mathbb{Z}_{p^2}\oplus \mathbb{Z}_{p}$  &  $\mathbb{Z}_{p}$&$\mathbb{Z}_{p^3}\oplus \mathbb{Z}_{p}$ &$\mathbb{Z}_p^{(2)}\oplus\mathbb Z_{p^3} $&$\mathbb{Z}_p^{(4)}\oplus\mathbb Z_{p^3} $ \\
\hline
 $G_{16} $   &$3$ &$\mathbb{Z}_{p}^{(2)}$&  $\mathbb{Z}_{p}^{(2)}$ &$\mathbb{Z}_{p}^{(2)}$&  $\mathbb{Z}_{p^2}\oplus \mathbb{Z}_{p}$& $\mathbb{Z}_{p^2}\oplus \mathbb{Z}_{p}^{(2)}$ &$\mathbb{Z}_p^{(4)}\oplus\mathbb Z_{p^2} $ \\
\hline
 $G_{17} $   &$3$ &$\mathbb{Z}_{p}^{(2)}$& $\mathbb{Z}_{p}^{(2)}$ &$\mathbb{Z}_{p}^{(2)}$  & $\mathbb{Z}_{p^2}\oplus \mathbb{Z}_{p}$ & $\mathbb{Z}_{p^2}\oplus \mathbb{Z}_{p}^{(2)}$ &$\mathbb{Z}_p^{(4)}\oplus\mathbb Z_{p^2} $ \\
\hline
 $G_{18} $  &$3$  &$\mathbb{Z}_{p}^{(2)}$& $\mathbb{Z}_{p}^{(2)}$ &$\mathbb{Z}_{p}^{(2)}$  &  $\mathbb{Z}_{p^2}\oplus \mathbb{Z}_{p}$ & $\mathbb{Z}_{p^2}\oplus \mathbb{Z}_{p}^{(2)}$ &$\mathbb{Z}_p^{(4)}\oplus\mathbb Z_{p^2} $ \\
\hline
   $G_{19} $ &$3$  &$\mathbb{Z}_{p}$& $\mathbb{Z}_{p^2}$ &$\mathbb{Z}_{p}^{(2)}$  &  $\mathbb{Z}_{p^2}\oplus \mathbb{Z}_{p}$ & $\mathbb{Z}_{p^2}\oplus \mathbb{Z}_{p}^{(2)} $& $\mathbb{Z}_p^{(3)}\oplus\mathbb Z_{p^2} $ \\
\hline
  $G_{20}$  &$3$ &$\mathbb{Z}_{p}^{(2)}$ & $\mathbb{Z}_{p}^{(2)}$ &$\mathbb{Z}_{p}^{(2)}$ & $\mathbb{Z}_{p^2}\oplus \mathbb{Z}_{p}$ & $\mathbb{Z}_{p^2}\oplus \mathbb{Z}_{p}^{(2)}$ &$\mathbb{Z}_p^{(4)}\oplus\mathbb Z_{p^2} $ \\
\hline
 $G_{21}$ &$3$&$\mathbb{Z}_{p}$& $ \mathbb{Z}_{p}^{(2)}$ &$\mathbb{Z}_{p}^{(2)}$ &  $ \mathbb{Z}_{p^2}\oplus \mathbb{Z}_{p}$ & $\mathbb{Z}_{p^2}\oplus \mathbb{Z}_{p}^{(2)} $& $\mathbb{Z}_p^{(3)}\oplus\mathbb Z_{p^2} $
 \\
\hline
$G_{22}$ &$3$&$\mathbb{Z}_{p}$& $ \mathbb{Z}_{p^2}$ &$\mathbb{Z}_{p}^{(2)}$ &  $ \mathbb{Z}_{p^2}\oplus \mathbb{Z}_{p}$& $ \mathbb{Z}_{p^2}\oplus \mathbb{Z}_{p}^{(2)}$&$\mathbb{Z}_p^{(3)}\oplus\mathbb Z_{p^2} $ \\
\hline
 $G_{23}$   &$3$ &$\mathbb{Z}_{p}$&$\mathbb Z_{p^2}$ &$ \mathbb{Z}_{p}^{(2)}$ &  $\mathbb{Z}_{p^2}\oplus \mathbb{Z}_{p}$ & $\mathbb{Z}_{p^2}\oplus \mathbb{Z}_{p}^{(2)}$ &$\mathbb{Z}_p^{(3)}\oplus\mathbb Z_{p^2} $ \\
\hline
$G_{24} $  &$3$  &$1$& $\mathbb {Z}_{p}$ &$ \mathbb{Z}_{p^2 }$  &   $\mathbb{Z}_{p^2}\oplus \mathbb{Z}_{p}$ &  $\mathbb{Z}_{p^2}\oplus \mathbb{Z}_{p}^{(2)}$  &$ \mathbb{Z}_{p}^{(2)} \oplus\mathbb{Z}_{p^2 }$ \\
\hline
 $G_{25} $  &$2$  &$\mathbb{Z}_{p}$& $ \mathbb{Z}_{p^2}\oplus \mathbb{Z}_{p}$ & $\mathbb{Z}_{p}$ & $\mathbb{Z}_{p^3}\oplus \mathbb{Z}_{p}$& $\mathbb{Z}_{p^3}\oplus \mathbb{Z}_{p}^{(2)}$  & $\mathbb{Z}_{p}^{(3)} \oplus\mathbb{Z}_{p^3 }$\\
\hline
 $G_{26} $  &$1$  &$\mathbb{Z}_{p}$& $ \mathbb{Z}_{p^4}\oplus \mathbb{Z}_{p}$ &$1$ &  $\mathbb{Z}_{p^4}\oplus \mathbb{Z}_{p}$ &  $\mathbb{Z}_{p^4}\oplus \mathbb{Z}_{p}^{(2)}$ &$\mathbb{Z}_{p}^{(3)} \oplus\mathbb{Z}_{p^4 }$ \\
\hline
$G_{27} $   &$2$ &$1$&  $\mathbb{Z}_{p^3}$ &$\mathbb{Z}_p$  &$\mathbb{Z}_{p^3}\oplus \mathbb{Z}_{p}$&  $\mathbb{Z}_{p^3}\oplus \mathbb{Z}_{p}^{(2)}$ &$\mathbb{Z}_{p}^{(2)} \oplus\mathbb{Z}_{p^3 }$ \\
\hline
  $G_{28} $  &$4$&$\mathbb{Z}_{p}^{(3)}$&  $\mathbb{Z}_{p}$ &$\mathbb{Z}_p^{(3)}$  & $\mathbb{Z}_p^{(2)}$ & $\mathbb{Z}_{p}^{(3)}$ &$\mathbb{Z}_p^{(6)}$ \\
\hline
 $G_{29_{a}} $  &$4$ &$\mathbb{Z}_{p}$& $\mathbb{Z}_p$ &$\mathbb{Z}_p^{(3)}$  &  $\mathbb{Z}_{p}^{(2)}$ & $\mathbb{Z}_{p}^{(3)}$ &$\mathbb{Z}_p^{(4)}$ \\
\hline
$G_{30} $  &$4$ &$\mathbb{Z}_{p}$& $\mathbb {Z}_{p}$ &$ \mathbb{Z}_p^{(3)}$  &   $\mathbb{Z}_p^{(2)}$ &  $ \mathbb{Z}_{p}^{(3)}$  &$ \mathbb{Z}_{p}^{(4)}$ \\
\hline
 $G_{31} $  &$4$ &$\mathbb{Z}_{p}^{(3)}$& $ \mathbb{Z}_{p}$ & $\mathbb{Z}_p^{(3)}$ &  $\mathbb{Z}_p^{(2)}$ & $\mathbb{Z}_{p}^{(3)}$  & $ \mathbb{Z}_{p}^{(6)}$\\
\hline
\end{tabular}
\end{table}
%%%%%%%%%%%%%%%%%%%%%%%%55
\begin{table}[]
\centering
%\caption{Fig. 4.}\label{1}
\begin{tabular}{|c|c|c|c|c|c|c|c|c|}
 \hline
\textbf{Type of} \large{$G$} &\large{$cl(G)$} &\large{$\mathcal{M}(G)$} &\large{$Z(G)$}&\large{ $G'$}&\large{$G^{ab}$}&\large{$\bigtriangledown(G)$} &\large{$J_{2}(G)$}\\
\hline
\hline
 $G_{32_{a}} $  &$4$  &$\mathbb{Z}_{p}$& $\mathbb{Z}_{p}$ &$\mathbb{Z}_p^{(3)}$ &  $\mathbb{Z}_p^{(2)}$ &  $\mathbb{Z}_{p}^{(3)}$ &$\mathbb{Z}_p^{(4)}$ \\
\hline
 $G_{33_{b}} $   &$4$ &$\mathbb{Z}_{p}$&  $\mathbb{Z}_{p}$ &$\mathbb{Z}_p^{(3)}$  &  $\mathbb{Z}_p^{(2)}$ &  $\mathbb{Z}_{p}^{(3)}$ &$\mathbb{Z}_p^{(4)}$ \\
\hline
  $G_{34} $  &$2$&$\mathbb{Z}_{p}^{(6)}$ &  $\mathbb{Z}_{p}^{(2)}$ &$\mathbb{Z}_p^{(2)}$  & $\mathbb{Z}_p^{(3)}$ & $\mathbb{Z}_{p}^{(6)}$ &$\mathbb{Z}_p^{(12)}$ \\
\hline
 $G_{35} $  &$2$&$\mathbb{Z}_{p}^{(4)}$ & $\mathbb{Z}_{p^2}\oplus \mathbb{Z}_{p}$ &$\mathbb{Z}_p$  &  $\mathbb{Z}_{p}^{(2)}\oplus \mathbb{Z}_{p^2}$ & $\mathbb{Z}_{p^2}\oplus\mathbb{Z}_p^{(5)}$ &$\mathbb{Z}_p^{(9)}\oplus\mathbb{Z}_{p^2}$ \\
\hline
  $G_{36} $  &$2$ &$\mathbb{Z}_{p}^{(3)}$ &  $\mathbb{Z}_{p}^{(2)}$ &$\mathbb{Z}_p^{(2)}$  &$\mathbb{Z}_{p}^{(3)}$ & $\mathbb{Z}_{p}^{(6)}$ &$\mathbb{Z}_{p}^{(9)}$ \\
\hline
 $G_{37} $  &$2$ &$\mathbb{Z}_{p}^{(4)}$ & $\mathbb{Z}_{p}^{(3)}$ &$\mathbb{Z}_p$  &  $\mathbb{Z}_{p}^{(2)}\oplus \mathbb{Z}_{p^2}$ & $\mathbb{Z}_{p^2}\oplus\mathbb{Z}_p^{(5)}$ &$\mathbb{Z}_p^{(9)}\oplus\mathbb{Z}_{p^2}$ \\
\hline
 $G_{38} $  &$2$ &$\mathbb{Z}_{p}^{(3)}$ & $\mathbb{Z}_{p}^{(2)}$ &$\mathbb{Z}_p^{(2)}$  &  $\mathbb{Z}_{p}^{(3)}$ & $\mathbb{Z}_{p}^{(6)}$ &$\mathbb{Z}_p^{(9)}$ \\
\hline
  $G_{39} $  &$2$ &$\mathbb{Z}_{p}^{(3)}$&  $\mathbb{Z}_{p}^{(2)}$ &$\mathbb{Z}_p^{(2)}$  & $\mathbb{Z}_{p}^{(3)}$ & $\mathbb{Z}_{p}^{(6)}$ &$\mathbb{Z}_p^{(9)}$ \\
\hline
 $G_{40} $   &$2$ &$\mathbb{Z}_{p}^{(3)}$& $\mathbb{Z}_{p}^{(3)}$ &$\mathbb{Z}_{p}$  &  $\mathbb{Z}_{p}^{(2)}\oplus \mathbb{Z}_{p^2}$ & $\mathbb{Z}_{p^2}\oplus\mathbb{Z}_p^{(5)}$ &$\mathbb{Z}_p^{(8)}\oplus\mathbb{Z}_{p^2}$ \\
\hline
$G_{41} $   &$2$ &$\mathbb{Z}_{p}^{(2)}$& $\mathbb{Z}_{p}^{(2)}$ &$\mathbb{Z}_p^{(2)}$  &  $\mathbb{Z}_p^{(3)}$ & $\mathbb{Z}_p^{(6)}$ &$\mathbb{Z}_p^{(8)}$ \\
\hline
  $G_{42} $  &$2$ &$\mathbb{Z}_{p}$&  $\mathbb{Z}_{p}^{(2)}$ &$\mathbb{Z}_p^{(2)}$  & $\mathbb{Z}_p^{(3)}$ & $\mathbb{Z}_p^{(6)}$ &$\mathbb{Z}_p^{(7)}$ \\
\hline
 $G_{43} $  &$1$ &$\mathbb{Z}_{p}^{(2)}\oplus\mathbb{Z}_{p^{2}}$ & $\mathbb{Z}_{p^2}^{(2)}\oplus\mathbb{Z}_{p}$ &$1$  &  $\mathbb{Z}_{p^2}^{(2)}\oplus\mathbb{Z}_{p}$ & $\mathbb{Z}_{p^2}^{(3)}\oplus\mathbb{Z}_{p}^{(3)}$ &$\mathbb{Z}_{p^2}^{(4)}\oplus\mathbb{Z}_{p}^{(5)}$ \\
\hline
 $G_{44} $   &$2$ &$\mathbb{Z}_{p}$& $\mathbb{Z}_{p}^{(2)}$ &$\mathbb{Z}_p^{(2)}$  &  $\mathbb{Z}_{p}^
{(3)}$ & $\mathbb{Z}_{p}^{(6)}$ &$\mathbb{Z}_p^{(7)}$ \\
\hline
  $G_{45} $  &$2$ &$\mathbb{Z}_{p}\oplus\mathbb{Z}_{p^{2}}$ &  $\mathbb{Z}_{p^2}\oplus\mathbb{Z}_{p}$ &$\mathbb{Z}_p$  & $\mathbb{Z}_{p^2}\oplus\mathbb{Z}_{p}^{(2)}$ & $\mathbb{Z}_{p^2}\oplus\mathbb{Z}_p^{(5)}$ &$\mathbb{Z}_p^{(6)}\oplus\mathbb{Z}_{p^2}^{(2)}$ \\
\hline
 $G_{46} $   &$2$& $\mathbb{Z}_p$ &$\mathbb{Z}_p^{(2)}$  &  $\mathbb{Z}_{p}^{(2)}$ &  $\mathbb{Z}_{p}^{(3)}$ &$\mathbb{Z}_p^{(6)}$& $ \mathbb{Z}_{p}^{(7)}$ \\
\hline
$G_{47} $   &$2$ & $\mathbb{Z}_{p}^{(2)}$ &$\mathbb{Z}_{p^2}\oplus\mathbb{Z}_{p}$  &  $ \mathbb{Z}_{p}$ &  $\mathbb{Z}_{p^2}\oplus\mathbb{Z}_{p}^{(2)}$ &$\mathbb{Z}_{p^2}\oplus\mathbb{Z}_{p}^{(5)}$& $\mathbb{Z}_{p^2}\oplus\mathbb{Z}_{p}^{(7)}$ \\
\hline
\small{$G_{48,1}=G_{48_{k}}$ if $k\neq\frac{p-1}{2}$}    &$2$ &$\mathbb{Z}_p$& $ \mathbb{Z}_{p}^{(2)}$ &  $\mathbb{Z}_p^{(2)}$ &   $\mathbb{Z}_{p}^{(3)}$ & $ \mathbb{Z}_{p}^{(6)}$& $\mathbb{Z}_{p}^{(7)}$\\
 \hline
 \small{$G_{48,2}=G_{48_{k}}$ if $k=\frac{p-1}{2}$}   &$2$ &$\mathbb{Z}_{p^{2}}$& $\mathbb{Z}_p^{(2)}$ & $\mathbb{Z}_p^{(2)}$  &  $\mathbb{Z}_{p}^{(3)}$ & $\mathbb{Z}_{p}^{(6)}$& $\mathbb{Z}_{p^2}\oplus\mathbb{Z}_{p}^{(6)}$\\
 \hline
$G_{49}$ &$2$&$\mathbb{Z}_{p^{2}}$& $\mathbb{Z}_p^{(2)}$ & $\mathbb{Z}_p^{(2)}$ &  $\mathbb{Z}_{p}^{(3)}$ & $\mathbb{Z}_{p}^{(6)}$  &$  \mathbb Z_{p}^{(6)}\oplus\mathbb Z_{p^{2}} $\\
\hline
 $G_{50_{k}}$   &$2$&$\mathbb{Z}_{p}$&$ \mathbb{Z}_p^{(2)}$ & $\mathbb{Z}_p^{(2)}$ &   $\mathbb{Z}_{p}^{(3)}$ & $\mathbb{Z}_{p}^{(6)}$&$\mathbb Z_{p}^{(7)}$\\
\hline
$G_{51}$  &$1$&$\mathbb{Z}_p^{(3)}$&$ \mathbb{Z}_{p^3}\oplus\mathbb{Z}_p^{(2)} $ &$ 1 $ &  $\mathbb{Z}_{p^3}\oplus\mathbb{Z}_p^{(2)}$ & $\mathbb{Z}_{p^3}\oplus\mathbb{Z}_p^{(5)}$&$\mathbb Z_{p}^{(8)}\oplus\mathbb Z_{p^3}$\\
\hline
 $G_{52}$   &$2$ &$\mathbb{Z}_p^{(2)}$&$\mathbb{Z}_{p^3}$ &$\mathbb Z_{p}$ & $\mathbb{Z}_{p^2}\oplus\mathbb{Z}_p^{(2)}$ & $\mathbb{Z}_{p^2}\oplus\mathbb{Z}_p^{(5)}$ &$\mathbb{Z}_{p^2}\oplus\mathbb{Z}_p^{(7)}$\\
 \hline
 $G_{53}$   &$2$&$\mathbb{Z}_p^{(2)}$& $\mathbb{Z}_{p^2}\oplus\mathbb{Z}_p$  &$\mathbb{Z}_{p}$ &  $\mathbb{Z}_{p^2}\oplus\mathbb{Z}_p^{(2)}$  & $\mathbb{Z}_{p^2}\oplus\mathbb{Z}_p^{(5)}$ &$\mathbb{Z}_{p^2}\oplus\mathbb{Z}_p^{(7)}$ \\
\hline
 $G_{54}$  &$3$ &$\mathbb{Z}_p^{(4)}$& $ \mathbb{Z}_{p}^{(2)}$ &$ \mathbb{Z}_{p}^{(2)}$ &  $ \mathbb{Z}_p^{(3)}$ & $ \mathbb{Z}_{p}^{(6)}$ &$\mathbb{Z}_{p}^{(10)}$ \\
\hline
$G_{55}$  &$3$ &$\mathbb{Z}_p^{(3)}$ & $ \mathbb{Z}_{p^2}$ &$  \mathbb{Z}_{p}^{(2)}$ &  $\mathbb{Z}_p^{(3)}$& $\mathbb{Z}_{p}^{(6)}$&$\mathbb{Z}_{p}^{(9)}$ \\
\hline
 $G_{56}$   &$3$ &$\mathbb{Z}_p^{(3)}$&$\mathbb{Z}_{p}^{(2)}$ &$  \mathbb{Z}_{p}^{(2)}$ &  $ \mathbb{Z}_p^{(3)}$ & $ \mathbb{Z}_{p}^{(6)}$ &$ \mathbb{Z}_{p}^{(9)}$ \\
\hline
 $G_{57} $  &$3$ &$\mathbb{Z}_p^{(3)}$ &$ \mathbb{Z}_{p}^{(2)}$& $ \mathbb{Z}_{p}^{(2)}$ &  $ \mathbb{Z}_p^{(3)}$ & $\mathbb{Z}_{p}^{(6)}$  & $\mathbb{Z}_{p}^{(9)}$\\
\hline
 $G_{58} $  &$3$ &$\mathbb{Z}_p^{(3)}$ & $\mathbb {Z}_{p}^{(2)} $ &$ \mathbb{Z}_{p}^{(2)}$ & $\mathbb{Z}_p^{(3)}$ &  $\mathbb{Z}_{p}^{(6)}$ & $ \mathbb{Z}_{p}^{(9)}$ \\
  \hline
 $G_{59} $ &$3$  &$\mathbb{Z}_p^{(4)}$ &$\mathbb{Z}_{p}$ &$ \mathbb{Z}_{p}^{(2)}$ &  $\mathbb{Z}_p^{(3)}$ & $\mathbb{Z}_{p}^{(6)}$& $\mathbb{Z}_{p}^{(10)}$ \\
\hline
$G_{60} $   &$3$ &$\mathbb{Z}_p^{(3)}$&$\mathbb{Z}_{p}$ &$ \mathbb{Z}_{p}^{(2)}$& $\mathbb{Z}_p^{(3)}$ &  $\mathbb{Z}_{p}^{(6)}$& $\mathbb{Z}_{p}^{(9)}$ \\
\hline
 $G_{61} $  &$3$ &$\mathbb{Z}_p^{(3)}$ & $\mathbb{Z}_{p} $ &$ \mathbb{Z}_{p}^{(2)}$ &  $\mathbb{Z}_{p}^{(3)}$ & $\mathbb{Z}_p^{(6)}$  &$\mathbb{Z}_{p}^{(9)}$\\
\hline

 $G_{62} $  &$3$ &$\mathbb{Z}_p^{(3)}$ &  $\mathbb{Z}_{p}$ &$ \mathbb{Z}_{p}^{(2)}$  &  $\mathbb{Z}_p^{(3)}$&$\mathbb{Z}_{p}^{(6)}$&$\mathbb{Z}_{p}^{(9)}$ \\
\hline
 $G_{63} $   &$3$&$\mathbb{Z}_p^{(3)}$&  $\mathbb{Z}_{p}$ &$ \mathbb{Z}_{p}^{(2)}$& $\mathbb{Z}_{p}^{(3)}$ & $\mathbb{Z}_{p}^{(6)}$ &$\mathbb{Z}_{p}^{(9)}$ \\
\hline
 $G_{64} $   &$2$ &$\mathbb{Z}_p^{(7)}$&  $\mathbb{Z}_{p}^{(3)}$ &$ \mathbb{Z}_{p}$  & $\mathbb{Z}_p^{(4)}$ & $\mathbb{Z}_{p}^{(10)}$ &$\mathbb{Z}_{p}^{(17)}$ \\
\hline
 $G_{65} $   &$3$ &$\mathbb{Z}_p^{(5)}$& $\mathbb{Z}_{p}$ &$\mathbb{Z}_{p}$  &  $\mathbb{Z}_p^{(4)}$ & $\mathbb{Z}_{p}^{(10)}$ &$\mathbb{Z}_{p}^{(15)}$ \\
\hline
  $G_{66}$  &$1$&$\mathbb{Z}_p^{(6)}$ & $\mathbb{Z}_p^{(3)}\oplus\mathbb{Z}_{p^2}$  &$1$ &  $\mathbb{Z}_{p^2}\oplus \mathbb{Z}_p^{(3)}$& $\mathbb{Z}_{p^2}\oplus \mathbb{Z}_p^{(9)}$ &$\mathbb{Z}_p^{(15)}\oplus\mathbb{Z}_{p^2}$ \\
\hline
 $G_{67}$  &$2$ &$\mathbb{Z}_p^{(5)}$& $ \mathbb{Z}_{p}^{(3)}$ &$\mathbb{Z}_p$ &  $ \mathbb{Z}_p^{(4)}$ & $ \mathbb{Z}_{p}^{(10)}$ &$\mathbb{Z}_p^{(15)}$ \\
\hline
$G_{68}$  &$2$ &$\mathbb{Z}_{p}^{(5)}$& $ \mathbb{Z}_{p}^{(2)}\oplus \mathbb{Z}_{p}$ &$\mathbb{Z}_p$ &  $ \mathbb{Z}_p^{(4)}$& $\mathbb{Z}_{p}^{(10)}$&$\mathbb{Z}_p^{(15)}$ \\
\hline
 $G_{69}$   &$3$ &$\mathbb{Z}_{p}^{(5)}$&$\mathbb{Z}_{p}$ &$ \mathbb{Z}_p$ &  $ \mathbb{Z}_p^{(4)}$ & $\mathbb{Z}_{p}^{(10)}$ &$\mathbb{Z}_{p}^{(15)}$ \\
\hline
$G_{70} $   &$1$ &$\mathbb{Z}_{p}^{(10)}$& $\mathbb {Z}_{p}^{(5)} $ &$1$  &   $\mathbb{Z}_p^{(5)}$ &  $\mathbb{Z}_{p}^{(15)}$  &$\mathbb{Z}_{p}^{(25)}$ \\
\hline
\end{tabular}
\end{table}
 %%%%%%%%%%%%%%%%%%%%%%%%555
\begin{table}[]
\centering
\caption{Fig. 2.}\label{ta2}
\begin{tabular}{|c|c|c|c|c|c|c|c|}
 \hline
\textbf{Type of} \large{$G$} &\large{$cl(G)$} &\large{$\mathcal{M}(G)$} &\large{$G\wedge G$}&\large{ $G\otimes G$}&\large{$Z^{\wedge}(G)$}&\large{${Z}^{\otimes}(G)$} \\
\hline
\hline
$G_{1}$  &$1$ &$1$& $1$ & $ \mathbb{Z}_{p^5 }$  &  $ \mathbb{Z}_{p^5 }$ & $ 1$\\
 \hline
$G_{2}$ &$2$ & $\mathbb{Z}_{p}^{(3)} $& $\mathbb Z_{p^{2}}\oplus\mathbb{Z}_{p}^{(2)}$ &$ \mathbb Z_{p^{2}}^{(4)} \oplus \mathbb Z_{p}^{(2)}$ &  $1$ & $1$  \\
\hline
 $G_{3}$  &$3$  & $\mathbb{Z}_{p}^{(3)}$&$ \mathbb{Z}_{p}^{(6)}$ &$\mathbb{Z}_{p}^{(9)}$ &  $1$ & $1$\\
\hline
$G_{4}$  &$3$  & $ \mathbb{Z}_{p}$&$ \mathbb{Z}_p^{(4)}$ &$ \mathbb{Z}_{p}^{(7)} $ &  $\mathbb{Z}_{p}$ & $\mathbb{Z}_p$\\
\hline
 $G_{5}$ &$3$    & $\mathbb{Z}_{p}$&$\mathbb{Z}_p^{(4)}$ &$\mathbb Z_{p}^{(7)}$ &  $\mathbb{Z}_p$ & $\mathbb{Z}_p$ \\
 \hline
 $G_{6}$  &$3$ &$\mathbb{Z}_{p}$& $\mathbb{Z}_{p}^{(4)}$  &$\mathbb{Z}_{p}^{(7)}$ &  $\mathbb{Z}_p$ & $\mathbb{Z}_p$  \\
\hline
 $G_{7}$  &$3$&$1$& $ \mathbb{Z}_{p}^{(3)}$ &$\mathbb{Z}_{p}^{(6)}$ &  $ \mathbb{Z}_{p}^{(2)}$ & $ \mathbb{Z}_{p}^{(2)}$  \\
\hline
$G_{8}$ &$3$  &$1$& $\mathbb{Z}_{p}^{(3)}$ &$ \mathbb{Z}_{p}^{(6)}$ &  $\mathbb{Z}_{p}^{(2)}$& $\mathbb{Z}_{p}^{(2)}$ \\
\hline
 $G_{9}$   &$3$ &$1$&$\mathbb{Z}_{p}^{(3)}$ &$\mathbb{Z}_{p}^{(6)}$ &  $\mathbb{Z}_{p}^{(2)}$ & $\mathbb{Z}_{p}^{(2)}$  \\
\hline
 $G_{10} $  &$3$ &$\mathbb{Z}_{p}$ & $ \mathbb{Z}_{p^2 }\oplus \mathbb{Z}_{p}^{(2)}$ & $\mathbb{Z}_{p^2 }\oplus \mathbb{Z}_{p}^{(5)}$ &  $1$ & $1$  \\
\hline
 \small{$G_{11,1}=G_{11_{k}} $if $k\neq \frac{p-1}{2}$}  &$3$ & $1$ & $ \mathbb{Z}_{p}^{(3)}$ &$ \mathbb{Z}_{p}^{(6)}$ &  $\mathbb Z_{p}^{(2)}$ &  $\mathbb Z_{p}^{(2)}$ \\
\hline
 \small{$G_{11,2}=G_{11_{k}} $if $k=\frac{p-1}{2}$} &$3$ &$\mathbb Z_{p}$ & $\mathbb{Z}_{p^2 }\oplus \mathbb{Z}_{p}^{(2)} $ &$\mathbb{Z}_{p^2 }\oplus \mathbb{Z}_{p}^{(5)}$ &  $1$ &  $1$ \\
\hline
 $G_{12_{k}} $  &$3$&$1$&$\mathbb{Z}_{p}^{(3)}$ &$\mathbb{Z}_{p}^{(6)}$ &  $\mathbb Z_{p}^{(2)}$ & $\mathbb Z_{p}^{(2)}$ \\
\hline
 $G_{13} $  &$1$ & $\mathbb{Z}_{p^2}$&$\mathbb{Z}_{p^2}$ &$\mathbb Z_{p^3}\oplus\mathbb Z_{p^2}^{(3)}$&$\mathbb Z_{p}$ &$1$  \\
\hline
 $G_{14} $ &$2$&$\mathbb{Z}_{p}$ & $\mathbb Z_{p^2} $ &$\mathbb Z_{p^2}^{(4)}$ &  $\mathbb{Z}_{p}$ & $\mathbb{Z}_{p}$  \\
\hline
 $G_{15} $ &$2$ &$\mathbb{Z}_{p}^{(2)}$ &  $\mathbb Z_{p}^{(3)}$ &$\mathbb{Z}_{p^3}\oplus \mathbb Z_{p}^{(5)} $  &  $\mathbb{Z}_{p^{2}}$&$1$ \\
\hline
 $G_{16} $   &$3$ &$\mathbb{Z}_{p}^{(2)}$&  $\mathbb Z_{p}^{(4)}$ &$\mathbb{Z}_{p^2}\oplus\mathbb{Z}_{p}^{(6)}$&  $\mathbb{Z}_{p}$& $1$  \\
\hline
 $G_{17} $   &$3$ &$\mathbb{Z}_{p}^{(2)}$&  $\mathbb{Z}_{p^2}\oplus\mathbb{Z}_{p}^{(2)}$ &$\mathbb{Z}_{p^2}^{(2)}\oplus\mathbb{Z}_{p}^{(4)}$  & $1$  & $1$\\
\hline
 $G_{18} $  &$3$  &$\mathbb{Z}_{p}^{(2)}$& $\mathbb{Z}_{p^2}\oplus\mathbb{Z}_{p}^{(2)}$ &$\mathbb{Z}_{p^2}^{(2)}\oplus\mathbb{Z}_{p}^{(4)}$  &  $1$ & $1$  \\
\hline
   $G_{19} $ &$3$  &$\mathbb{Z}_{p}$& $\mathbb{Z}_{p}^{(3)}$ &$\mathbb{Z}_{p^2}\oplus\mathbb{Z}_{p}^{(5)}$  &  $\mathbb{Z}_{p^2}$ & $\mathbb{Z}_{p}$ \\
\hline
  $G_{20}$  &$3$ &$\mathbb{Z}_{p}^{(2)}$ & $\mathbb{Z}_p^{(4)}$  &$\mathbb{Z}_{p^2}\oplus\mathbb{Z}_{p}^{(6)}$ & $\mathbb{Z}_{p}$ & $1$  \\
\hline
 $G_{21}$ &$3$&$\mathbb{Z}_{p}$& $ \mathbb{Z}_{p}^{(3)}$ &$\mathbb{Z}_{p^2}\oplus\mathbb{Z}_{p}^{(5)}$ &  $ \mathbb{Z}_{p}^{(2)}$ & $ \mathbb{Z}_{p}$  \\
\hline
$G_{22}$ &$3$&$\mathbb{Z}_{p}$& $ \mathbb{Z}_{p}^{(3)}$ &$\mathbb{Z}_{p^2}\oplus\mathbb{Z}_{p}^{(5)}$ &  $ \mathbb{Z}_{p^2}$& $ \mathbb{Z}_{p}$ \\
 \hline
 $G_{23}$   &$3$ &$\mathbb{Z}_{p}$&$\mathbb{Z}_{p}^{(3)}$ &$ \mathbb{Z}_{p^2}\oplus\mathbb{Z}_{p}^{(5)}$ &  $ \mathbb{Z}_{p^2}$ & $ \mathbb{Z}_{p}$  \\
\hline
$G_{24} $  &$3$  &$1$& $\mathbb {Z}_{p^2} $ &$ \mathbb{Z}_{p^2}^{(2)}\oplus\mathbb{Z}_{p}^{(2)}$  &   $ \mathbb{Z}_{p}$ &  $ \mathbb{Z}_{p}$   \\
\hline
 $G_{25} $  &$2$  &$\mathbb{Z}_{p}$& $ \mathbb{Z}_{p^2}$ & $\mathbb{Z}_{p^2}\oplus\mathbb{Z}_{p^3}\oplus\mathbb{Z}_{p}^{(2)}$ & $\mathbb{Z}_{p}$& $1$  \\
\hline
 $G_{26} $  &$1$  &$\mathbb{Z}_{p}$& $\mathbb{Z}_{p}$ &$\mathbb{Z}_{p^4}\oplus\mathbb{Z}_p^{(3)}$ &  $\mathbb{Z}_{p^3}$ &$1$ \\
 \hline
 $G_{27} $  &$2$  &$1$& $\mathbb{Z}_{p}$ &$\mathbb{Z}_{p^3}\oplus\mathbb{Z}_p^{(3)}$ &  $\mathbb{Z}_{p^3}$ &$\mathbb{Z}_{p}$ \\
  \hline
  $G_{28} $  &$4$&$\mathbb{Z}_{p}^{(3)}$&  $E_{1}\times\mathbb{Z}_{p}^{(3)}$ &$E_{1}\times\mathbb{Z}_{p}^{(6)}$  & $1$ & $1$  \\
\hline
 $G_{29_{a}} $  &$4$ &$\mathbb{Z}_{p}$& $\mathbb{Z}_{p}^{(4)}$ &$\mathbb{Z}_{p}^{(7)}$  &  $\mathbb{Z}_{p}$ & $\mathbb{Z}_{p}$ \\
\hline
$G_{30} $  &$4$ &$\mathbb{Z}_{p}$& $\mathbb{Z}_{p}^{(4)} $ &$ \mathbb{Z}_{p}^{(7)}$  &   $ \mathbb{Z}_{p}$ &  $  \mathbb{Z}_{p}$   \\
\hline
 $G_{31} $  &$4$ &$\mathbb{Z}_{p}^{(3)}$& $E_{1}\times\mathbb{Z}_{p}^{(3)}$ & $E_{1}\times\mathbb{Z}_{p}^{(6)}$ &  $1$ & $1$  \\
\hline
 $G_{32_{a}} $  &$4$  &$\mathbb{Z}_{p}$& $\mathbb{Z}_{p}^{(4)}$ &$\mathbb{Z}_p^{(7)}$ &  $\mathbb{Z}_{p}$ &  $\mathbb{Z}_{p}$  \\
\hline
 $G_{33_{b}} $   &$4$ &$\mathbb{Z}_{p}$&  $\mathbb{Z}_{p}^{(4)}$ &$\mathbb{Z}_p^{(7)}$  &  $\mathbb{Z}_{p}$ &  $\mathbb{Z}_{p}$ \\
\hline
  $G_{34} $  &$2$&$\mathbb{Z}_{p}^{(6)}$ &  $\mathbb{Z}_{p}^{(8)}$ &$\mathbb{Z}_p^{(14)}$  & $1$ & $1$  \\
\hline
$G_{35} $  &$2$&$\mathbb{Z}_{p}^{(4)}$ & $\mathbb{Z}_{p}^{(5)}$ &$\mathbb{Z}_{p^2}\oplus\mathbb{Z}_{p}^{(10)}$  &$\mathbb{Z}_{p}$  &$1$ \\
\hline
$G_{36} $  &$2$ &$\mathbb{Z}_{p}^{(3)}$ &  $\mathbb{Z}_{p}^{(5)}$ &$\mathbb{Z}_{p}^{(11)}$  &$\mathbb{Z}_p$ & $\mathbb{Z}_{p}$ \\
 \hline
 $G_{37} $  &$2$ &$\mathbb{Z}_{p}^{(4)}$ & $\mathbb{Z}_{p}^{(5)}$ &$\mathbb{Z}_{p^2}\oplus\mathbb{Z}_{p}^{(10)}$  &$\mathbb{Z}_{p}$ & $1$  \\
\hline
 $G_{38} $  &$2$ &$\mathbb{Z}_{p}^{(3)}$ & $\mathbb{Z}_{p}^{(5)}$ &$\mathbb{Z}_{p}^{(11)}$  &  $\mathbb{Z}_{p}$ & $\mathbb{Z}_{p}$  \\
\hline
  $G_{39} $  &$2$ &$\mathbb{Z}_{p}^{(3)}$&  $\mathbb{Z}_{p}^{(5)}$ &$\mathbb{Z}_p^{(11)}$  & $\mathbb{Z}_{p}$& $\mathbb{Z}_{p}$  \\
\hline
 $G_{40} $   &$2$ &$\mathbb{Z}_{p}^{(3)}$& $\mathbb{Z}_{p^2}\oplus\mathbb{Z}_p^{(2)} $ &$\mathbb{Z}_{p^2}^{(2)}\oplus\mathbb{Z}_p^{(7)}$  &  $1$ & $1$  \\
\hline
$G_{41} $   &$2$ &$\mathbb{Z}_{p}^{(2)}$& $\mathbb{Z}_{p^2}\oplus\mathbb{Z}_p^{(2)}$ &$\mathbb{Z}_{p^2}\oplus\mathbb{Z}_p^{(8)}$  &  $1$ & $1$\\
\hline
  $G_{42} $  &$2$ &$\mathbb{Z}_{p}$&  $\mathbb{Z}_{p}^{(3)}$ &$\mathbb{Z}_p^{(9)}$  & $\mathbb{Z}_{p}^{(2)}$ & $\mathbb{Z}_{p}^{(2)}$  \\
\hline
 $G_{43} $  &$1$ &$\mathbb{Z}_{p}^{(2)}\oplus\mathbb{Z}_{p^{2}}$ & $\mathbb{Z}_{p^2}\oplus\mathbb{Z}_p^{(2)}$ &$\mathbb{Z}_{p^2}^{(4)}\oplus\mathbb{Z}_p^{(5)}$  &  $1$ & $1$  \\
\hline
 $G_{44} $   &$2$ &$\mathbb{Z}_{p}$& $\mathbb{Z}_p^{(3)}$ &$\mathbb{Z}_p^{(9)}$  &  $\mathbb{Z}_{p}^
{(2)}$ & $\mathbb{Z}_{p}^
{(2)}$ \\
\hline
\end{tabular}
\end{table}
 %%%%%%%%%%%%%%%%%%%%%%%%555
\begin{table}[h!]
\centering
%\caption{Fig. 2.}\label{ta2}
\begin{tabular}{|c|c|c|c|c|c|c|c|}
 \hline
\textbf{Type of} \large{$G$} &\large{$cl(G)$} &\large{$\mathcal{M}(G)$} &\large{$G\wedge G$}&\large{ $G\otimes G$}&\large{$Z^{\wedge}(G)$}&\large{${Z}^{\otimes}(G)$} \\
\hline
\hline
  $G_{45} $  &$2$ &$\mathbb{Z}_{p}\oplus\mathbb{Z}_{p^{2}}$ &  $\mathbb{Z}_{p^2}\oplus\mathbb{Z}_p^{(2)}$ &$\mathbb{Z}_{p^2}^{(2)}\oplus\mathbb{Z}_p^{(7)}$  & $1$ & $1$  \\
\hline
 $G_{46} $   &$2$& $\mathbb{Z}_p$ &$\mathbb{Z}_p^{(3)}$  &  $\mathbb{Z}_p^{(9)}$ &  $\mathbb{Z}_{p}^{(2)}$ &$\mathbb{Z}_{p}^{(2)}$ \\
\hline
$G_{47} $   &$2$ & $\mathbb{Z}_p^{(2)}$ &$\mathbb{Z}_p^{(3)}$  &  $\mathbb{Z}_{p^2}\oplus\mathbb{Z}_p^{(8)}$ &  $\mathbb{Z}_{p}^{(2)}$ &$\mathbb{Z}_{p}$ \\
\hline
\small{$G_{48,1}=G_{48_{k}}$ if $k\neq\frac{p-1}{2}$}    &$2$ &$\mathbb{Z}_p$& $\mathbb{Z}_{p}^{(3)}$ & $\mathbb{Z}_{p}^{(9)}$  &   $\mathbb{Z}_{p}^{(2)}$ & $\mathbb{Z}_{p}^{(2)}$\\
 \hline
  \small{ $G_{48,2}=G_{48_{k}}$ if $k=\frac{p-1}{2}$}   &$2$ &$\mathbb{Z}_{p^{2}}$& $ \mathbb{Z}_{p^2 }\oplus\mathbb{Z}_p^{(2)}$ & $\mathbb{Z}_{p^2 }\oplus\mathbb{Z}_p^{(8)}$  &  $ 1$ & $ 1$\\
 \hline
$G_{49}$ &$2$&$\mathbb{Z}_{p^{2}}$& $\mathbb{Z}_{p^2 }\oplus\mathbb{Z}_p^{(2)}$ &$ \mathbb{Z}_{p^2 }\oplus\mathbb{Z}_p^{(8)}$ &  $1$ & $1$  \\
\hline
 $G_{50_{k}}$   &$2$&$\mathbb{Z}_{p}$&$\mathbb{Z}_p^{(3)}$ &$\mathbb Z_{p}^{(9)}$ &   $\mathbb{Z}_{p}^{(2)}$ & $\mathbb{Z}_{p}^{(2)}$\\
\hline
$G_{51}$  &$1$&$\mathbb{Z}_p^{(3)}$&$ \mathbb{Z}_p^{(3)}$ &$ \mathbb{Z}_{p^3 }\oplus\mathbb Z_{p}^{(8)} $ &  $\mathbb{Z}_{p^2}$ & $1$\\
\hline
 $G_{52}$   &$2$ &$\mathbb{Z}_p^{(2)}$&$\mathbb{Z}_p^{(3)}$ &$\mathbb{Z}_{p^2 }\oplus\mathbb Z_{p}^{(8)}$ & $\mathbb{Z}_{p^2}$ & $\mathbb{Z}_{p}$ \\
 \hline
 $G_{53}$   &$2$&$\mathbb{Z}_p^{(2)}$& $\mathbb{Z}_{p}^{(3)}$  &$\mathbb{Z}_{p^2 }\oplus\mathbb Z_{p}^{(8)}$ & $\mathbb{Z}_{p^2}$   & $\mathbb{Z}_{p}$ \\
\hline
 $G_{54}$  &$3$ &$\mathbb{Z}_p^{(4)}$& $ \mathbb{Z}_{p}^{(6)}$ &$\mathbb{Z}_{p}^{(12)}$ &  $ 1$ & $ 1$  \\
\hline
$G_{55}$  &$3$ &$\mathbb{Z}_p^{(3)}$ & $ \mathbb{Z}_{p}^{(5)}$ &$ \mathbb{Z}_{p}^{(11)}$ &  $\mathbb{Z}_{p}$& $\mathbb{Z}_{p}$ \\
\hline
 $G_{56}$   &$3$ &$\mathbb{Z}_p^{(3)}$&$\mathbb{Z}_{p}^{(5)}$ &$ \mathbb{Z}_{p}^{(11)}$&  $ \mathbb{Z}_{p}$ & $ \mathbb{Z}_{p}$  \\
\hline
 $G_{57} $  &$3$ &$\mathbb{Z}_p^{(3)}$ & $\mathbb{Z}_{p}^{(5)}$ & $\mathbb{Z}_{p}^{(11)}$ &  $ \mathbb{Z}_{p}$ & $\mathbb{Z}_{p}$  \\
\hline
 $G_{58} $  &$3$ &$\mathbb{Z}_p^{(3)}$ & $\mathbb{Z}_{p}^{(5)} $ &$\mathbb{Z}_{p}^{(11)}$ & $\mathbb{Z}_{p}$ &  $\mathbb{Z}_{p}$  \\
\hline
 $G_{59} $ &$3$  &$\mathbb{Z}_p^{(4)}$ &$\mathbb{Z}_{p}^{(6)}$ &$\mathbb{Z}_{p}^{(12)}$ &  $1$ & $1$ \\
\hline
 $G_{60} $   &$3$ &$\mathbb{Z}_p^{(3)}$&$\mathbb{Z}_{p}^{(5)}$ &$\mathbb{Z}_{p}^{(11)}$& $\mathbb{Z}_{p}$ &  $\mathbb{Z}_{p}$\\
\hline
 $G_{61} $  &$3$ &$\mathbb{Z}_p^{(3)}$ & $\mathbb{Z}_{p}^{(5)} $ &$\mathbb{Z}_{p}^{(11)}$ &  $\mathbb{Z}_{p}$ & $\mathbb{Z}_{p}$  \\
\hline
 $G_{62} $  &$3$ &$\mathbb{Z}_p^{(3)}$ &  $\mathbb{Z}_{p}^{(5)}$ &$\mathbb{Z}_{p}^{11)}$  &  $\mathbb{Z}_{p}$&$\mathbb{Z}_{p}$  \\
\hline
 $G_{63} $   &$3$&$\mathbb{Z}_p^{(3)}$&  $\mathbb{Z}_{p}^{(5)}$ &$\mathbb{Z}_{p}^{(11)}$& $\mathbb{Z}_{p}$ & $\mathbb{Z}_{p}$  \\
\hline
 $G_{64} $   &$2$ &$\mathbb{Z}_p^{(7)}$&  $\mathbb{Z}_{p}^{(8)}$ &$\mathbb{Z}_{p}^{(18)}$  & $1$ & $1$ \\
\hline
 $G_{65} $   &$3$ &$\mathbb{Z}_p^{(5)}$& $\mathbb{Z}_{p}^{(6)}$ &$\mathbb{Z}_{p}^{(16)}$  &  $\mathbb{Z}_{p}$ & $\mathbb{Z}_{p}$  \\
 \hline
  $G_{66}$  &$1$&$\mathbb{Z}_p^{(6)}$ & $\mathbb{Z}_{p}^{(6)}$  &$\mathbb{Z}_{p^2}\oplus\mathbb{Z}_{p}^{(15)}$ &  $\mathbb{Z}_{p}$& $1$  \\
\hline
 $G_{67}$  &$2$ &$\mathbb{Z}_p^{(5)}$& $\mathbb{Z}_{p}^{(6)}$ &$\mathbb{Z}_{p}^{(16)}$ &  $ \mathbb{Z}_{p}$ & $ \mathbb{Z}_{p}$  \\
\hline
$G_{68}$  &$2$ &$\mathbb{Z}_{p}^{(5)}$& $\mathbb{Z}_{p}^{(6)}$ &$\mathbb{Z}_p^{(16)}$ &  $ \mathbb{Z}_{p}$& $ \mathbb{Z}_{p}$ \\
\hline
 $G_{69}$   &$3$ &$\mathbb{Z}_{p}^{(5)}$&$\mathbb{Z}_{p}^{(6)}$ &$ \mathbb{Z}_{p}^{(16)}$ &  $ \mathbb{Z}_{p}$ &  $ \mathbb{Z}_{p}$  \\
\hline
$G_{70} $   &$1$ &$\mathbb{Z}_{p}^{(10)}$& $\mathbb{Z}_{p}^{(10)} $ &$ \mathbb{Z}_{p}^{(25)}$  &   $1$ &  $ 1$\\
\hline
\end{tabular}
\end{table}
%%%%%%%%%%%%%%%%%%%%%%%%%%\

 \end{document}